\documentclass[11pt]{amsart}
\usepackage[T1]{fontenc}
\usepackage{amssymb, amsthm, amsmath}

\usepackage[lite]{amsrefs}

\usepackage{amsfonts,amsbsy}

\newtheorem{theorem}{Theorem}[section]

\newtheorem{corollary}[theorem]{Corollary}

\newtheorem{definition}{Definition}[section]
\newtheorem{example}[theorem]{Example}

\newtheorem{lemma}[theorem]{Lemma}

\newtheorem{proposition}[theorem]{Proposition}

\newtheorem{fact}[theorem]{Fact}

\newtheorem{observation}[theorem]{Observation}

\theoremstyle{remark}
\newtheorem{remark}[theorem]{Remark}
\newtheorem{claim}[theorem]{Claim}

\usepackage{bbm}

\let\cc\c
\usepackage{common}

\newcommand{\lset}[3]{\ensuremath{\{ #1_{#2} : #2 < #3\}}}

\DeclareMathOperator{\wt}{wt}\DeclareMathOperator{\pwt}{pwt}

\DeclareMathOperator{\gstpw}{gstpw}\DeclareMathOperator{\gstw}{gstw}
\DeclareMathOperator{\thpw}{{pwt^\tho}}\DeclareMathOperator{\thw}{{wt^\tho}}

\title{Orthogonality and domination in unstable theories}
\author{Alf Onshuus}
\author{Alexander Usvyatsov}

\address{ Alexander Usvyatsov\\ Universidade de Lisboa \\
  Centro de Matem\'{a}tica e Aplica\cc{c}\~{o}es Fundamentais\\
  Av. Prof. Gama Pinto,2\\
  1649-003 Lisboa \\
  Portugal}

\urladdr{http://www.math.ucla.edu/\textasciitilde alexus}

\thanks{The second author was partially supported by FCT grant SFRH / BPD / 34893 /
2007}

\date{\today}

\begin{document}

\newcommand{\bolda}{\m{\bold \a}}
\newcommand{\boldb}{\m{\bold \b}}
\newcommand{\boldaa}{\m{\bold \a \;}}
\newcommand{\boldbb}{\m{\bold \b \;}}

\newcommand{\phvar}[3]{\m{\ph(#1,#2_#3)}}
\newcommand{\psvar}[3]{\m{\psi(#1,#2_#3)}}
\newcommand{\phx}[2]{\phvar{x}{#1}{#2}}
\newcommand{\psx}[2]{\psvar{x}{#1}{#2}}
\newcommand{\phxa}[1]{\phvar{x}{\a}{#1}}
\newcommand{\psxb}[1]{\psvar{x}{\b}{#1}}
\newcommand{\phxai}{\phvar{x}{\a}{i}}
\newcommand{\psxbj}{\psvar{x}{\b}{j}}
\newcommand{\wx}{\m{\mathfrak{X}}}
\newcommand{\wxx}{\m{\mathfrak{X} \;}}

\def\Ind#1#2{#1\setbox0=\hbox{$#1x$}\kern\wd0\hbox to 0pt{\hss$#1\mid$\hss}
\lower.9\ht0\hbox to 0pt{\hss$#1\smile$\hss}\kern\wd0}
\def\ind{\mathop{\mathpalette\Ind{}}}
\def\Notind#1#2{#1\setbox0=\hbox{$#1x$}\kern\wd0\hbox to 0pt{\mathchardef
\nn=12854\hss$#1\nn$\kern1.4\wd0\hss}\hbox to
0pt{\hss$#1\mid$\hss}\lower.9\ht0 \hbox to
0pt{\hss$#1\smile$\hss}\kern\wd0}
\def\nind{\mathop{\mathpalette\Notind{}}}

\def\thind{\mathop{\mathpalette\Ind{}}^{\text{\th}} }
\def\nthind{\mathop{\mathpalette\Notind{}}^{\text{\th}} }
\def\uth{{\rm{\text{U}}}^{\text{\th}} }
\def\tho{\text{\th} }
\def\sb{\langle b_j \rangle }
\def\sa{\langle a_i \rangle }
\def\sbb{\langle \b_j \rangle }
\def\saa{\langle \a_i \rangle }

\begin{abstract}
In the first part of the paper we study orthogonality, domination,
weight, regular and minimal types in the contexts of rosy and
super-rosy theories. Then we try to develop analogous theory for
arbitrary dependent theories.
\end{abstract}

\maketitle

\section{Introduction and preliminaries}

There are several questions that motivated this research. First, it
is natural to extend the concepts of domination, regularity  and
weight to rosy theories (as it has already been done in the simple
unstable context). One reason for doing this is ``coordinatization''
theorems: one would like to analyze an arbitrary type in terms of
types that can be studied and classified more easily: regular (admit
a pregeometry), minimal, etc. We prove several results of this kind
in section 3. These provide a complementary picture to the recent
work of Assaf Hasson and the first author \cite{HaOn2} where minimal
types in super-rosy theories are investigated. For example, the two
articles combined throw some light on types in theories
interpretable in o-minimal structures.

Another motivation came from our desire to understand and develop
the concept of \emph{strong dependence} (\cite{shelah863}). It has
recently become clear
%(see the work \cite{Ad} by Hans Adler and the
%previous papers of the authors \cite{Us} and \cite{OnUs})
that this notion is strongly connected to weight. In \cite{Us} the
second author shows that every strongly dependent type has
rudimentary finite generically stable weight. Hence a stable theory
is strongly dependent precisely when every type has finite weight.
The latter conclusion has also been observed by Adler in \cite{Ad},
as he studied the notion of ``burden'', which generalizes weight and
makes sense in any theory. A related concept (within the context of
dependent theories) is investigated by the authors in \cite{OnUs}. A
natural question that arises is: given a dependent theory with a
good enough independence relation, does strong dependence always
imply finite ``weight'' ? More precisely, we analyze the following
two questions in this article. Is thorn-weight finite in a strongly
dependent rosy theory? Is there a natural notion of forking weight
in an arbitrary dependent theory, and what is the connection to
strong dependence? We give a positive answer to the first question
in section 2, and address the second one in section 4.

Several directions pursued in this paper require a delicate analysis
of existence of mutually indiscernible (sometimes Morley) sequences.
Claims of this form are proved in section 2 (in rosy context) and
section 4 (for dependent theories). We find these results of
interest on their own and believe that they might have further
applications. One interesting consequence of our analysis which has
several applications beyond the study of weight is that in a
dependent theory dividing (say, over an extension base) can be
always witnessed by a Morley sequence.

\bigskip

The paper is organized as follows:

We start by defining notions related to forking and \th-forking,
quoting some of the relevant results and proving others that will be
needed throughout the paper.

%Section \ref{th-orthognonality} is devoted to basics

Most of the paper is devoted to understand \th-orthogonality and the
role of \th-regular and minimal types in rosy, super-rosy, and
finite $\uth$-rank structures. We show many results analogous to
those in stable (and simple) theories, and conclude with a strong
decomposition theorem for types of finite rank in rosy theories. As
we have already mentioned, this result suggests that analysis of
minimal types (as is done e.g. in \cite{HaOn2}) leads to
understanding of all types in a rosy theory of finite rank (e.g., a
theory interpretable in an o-minimal structure).

Section \ref{th-orthognonality} gives proofs of certain basic
results on thorn-weight, thorn-domination and regularity. Many of
these proofs follow the lines of classical ones, but we still go
through them carefully, and where the proofs diverge, we give
alternative proofs for the \th-forking context or explain how to
bridge the gaps. In this section we also connect thorn-weight to
strong dependence and show that every type in a strongly dependent
rosy theory has finite thorn-weight.

We have recently learnt that Hans Adler has also written (in an
unpublished note) a proof of the fact that in a rosy theory,
rudimentarily finite \th-weight implies finite \th-weight (Theorem
\ref{thm:finiteweight}). Both his and our proofs of this particular
fact are mostly based on Wagner's argument \cite{wagnerbook} for
simple theories, which is itself a generalization of Hyttinen's
results \cite{Hy} in the stable context.

In contrast, the analysis of finite-rank theories in Section
\ref{finite} is not close to the existing proofs for stable and
simple theories. Several useful technical tools applicable in this
and related contexts are developed, the main one being Proposition
\ref{2}. We believe that these tools should have many applications.

Finally, in Section  \ref{constructing} we finish the paper by
investigating sufficient conditions for existence of mutually
indiscernible sequences in dependent theories and draw certain
conclusions about the meaning of strong dependence, the behavior
of forking and concepts related to weight. In particular it is
shown that dividing in a dependent theory can normally be
witnessed by a Morley sequence.

\subsection{Notations and Assumptions}

Given a theory $T$, we will work inside its monster model denoted by
$\fC$. By ``monster'' we mean that all cardinals we mention are
``small'' (i.e. smaller than saturation of $\fC$), all sets are
small subsets of $\fC$, all models are small elementary submodels of
$\fC$, and truth values of all formulae and all types are calculated
in $\fC$. We denote tuples (finite unless said otherwise) by lower
case letters $a,b,c$ etc, sets by $A,B,C$ etc, models by $M,N$ etc.

By $a\equiv_Ab$ we mean $\tp(a/A) = \tp(b/A)$. Recall that this is
equivalent to having $\sigma \in \Aut(\fC/A)$ satisfying $\sigma(a)
= b$.

Given an order type $O$, a sequence $I = \inseq{a}{i}{O}$ and $j\in
O$, we often denote the set \set{a_i\colon i<j} by $a_{<j}$.
Similarly for $a_{\le j}, a_{>j}$ etc. We also often identify the
sequence $I$ with the set $\cup I$; that is, when no confusion
should arise we write $\tp(a/I)$ etc.

We will write $a \ind_A B$ for ``$\tp(a/AB)$ does not fork over
$A$''
%(see definition \ref{dfn:forking} below)
even if $T$ is not
simple. Although non-forking is generally not an independence
relation, we still find this notation convenient.

For simplicity we assume $T = T^{eq}$ for all theories $T$ mentioned
in this paper.

\subsection{\th-forking}
Since a big part of the paper deals with \th-forking and its
properties, we will now define the basic concepts related to this
notion. The following definitions and facts can be found in
\cite{onshuus}.

\begin{definition}
Let $\ph(x,y)$ be a formula, $b$ be a tuple and $C$ be any set. Then
we define the following.

\begin{itemize}
\item $\ph(x,b)$ \emph{strongly divides over $D$} if $b$ is
not algebraic over $D$ and the set

\[ \{\ph(x,b')\}_{b'\models tp(b/D)}\] is $k$-inconsistent for
some $k\in \mathbb{N}$.

\item $\ph(\x,b)$ \emph{\th-divides over $C$} if there is some
$D\supset C$ such that $\ph(x,b)$ strongly divides over $D$.

\item $\ph(x,b)$ \emph{\th-forks over $C$} if there are
finitely many formulas $\psi_1(x,b_1),\dots, \psi_n(x,b_n)$ such
that $\ph(x,b)\vdash \bigvee_i \psi_i(x,b_i)$ and $\psi_i(x,b_i)$
\th-divides over $C$ for $1\leq i\leq n$.
\end{itemize}
\end{definition}

We will define a theory to be \emph{rosy} if it does not admit
infinite \th-forking chains.

\medskip
Naturally, we say that a (partial) type  \th-divides/forks over a
set $A$ if it contains a formula which \th-divides/forks over $A$.

It is convenient to make definition of strong dividing for types
slightly more tricky, since by making sure that the strongly
dividing type uses ``all'' the parameters we are able to use
algebraic closure much more efficiently. We begin with the following
definition in the particular case of a type over a finite set.

\begin{definition}
    Let $p(x,b)$ be a (partial) type over a finite tuple $b$. We say
    that $p(x,b)$ strongly divides over a set $D$ if there is a
    formula $\ph(x,b) \in p(x,b)$ which strongly divides over $D$.
\end{definition}

\begin{remark}\label{strong dividing}
Notice that the definition of strong dividing (for formulas)
implies the following.

\begin{enumerate}
\item If $\ph(x,a)$ strongly divides over $A$, then
for every $b \models \ph(x,a)$ we have $\tp(a/Ab)$ is algebraic
(whereas $\tp(a/A)$ is nonalgebraic).

\item $\ph(x,a)$ strongly divides over $A$ if and only if
\begin{itemize}
\item $a \not\in \acl(A)$
%\item For every $b \models \ph(x,a)$ we have $\tp(a/Ab)$ is algebraic.
\item For every infinite nonconstant indiscernible sequence $\lseq{a}{i}{\om}$
in $\tp(a/A)$, we have $\set{\ph(x,a_i)\colon i<\om}$ is
inconsistent.
\end{itemize}

\item Let $a,b,A$ be such that $a\not\in acl(A)$. Then $\tp(b/Aa)$
strongly divides over $A$ if and only if for any $b'\models
\tp(b/Aa)$ we have $a\in acl(Ab')$.

\item If $\ph(x,a)$ strongly divides over $A$ and $B\supset A$ is
such that $a\not\in acl(B)$ then $\ph(x,a)$ strongly divides over
$B$.

%\item Assume that $\ph(x,a)$ strongly divides over $Ae$, and let
%$b \models \ph(x,a)$. Then $\tp(b/Aae)$ strongly divides over
%$Ae$.
\end{enumerate}
\end{remark}
\begin{prf}
\begin{enumerate}
\item
    Suppose $\ph(x,a)$ strongly divides over $A$ (so in particular
    $a \not\in\acl(A))$, and let $b \models \ph(x,a)$. By the
    definition, there are only finitely many $a_1, \ldots, a_{k-1}$ (say,
    $a_1 = a$) in $\tp(a/A)$ such that $\ph(b,a_i)$. In particular,
    there are only finitely many realizations of $\tp(a/Ab)$, as
    required.

\item
    The ``only if'' direction is clear. For the ``if'' direction,
    suppose that $\ph(x,a)$ does not strongly divide over $A$, but $a \not\in\acl(A)$. Then
    for every $k<\om$ there is a subset $\set{a_1, \ldots, a_k}$ of $\tp(a/A)$
    such that $\exists x \bigwedge_{i=1}^k\ph(x,a_i)$. By
    compactness, for any cardinal $\mu$ there is a sequence $\lseq{a}{\al}{\mu}$
    of realizations of $\tp(a/A)$ such that $\exists x
    \bigwedge_{i=1}^k\ph(x,a_{\al_i})$ for every $\al_1< \ldots
    <\al_k<\mu$. By Fact \ref{Erdos Rado} there is such an infinite (nonconstant)
    indiscernible sequence.

\item
    The ``only if'' direction follows from (i).

    On the other hand, assume that $a \not \in \acl(A)$,
    $\tp(b/Aa)$ does not strongly divide over $A$, but for any $b'\models
    \tp(b/Aa)$ we have $a\in acl(Ab')$. By (ii), for
    every formula $\ph(x,a) \in \tp(b/Aa)$ there is an indiscernible
    sequence $\lseq{a}{i}{\om}$ in $\tp(a/A)$ such that $\set{\ph(x,a_i)\colon
    i<\om}$ is consistent. Let $p(x,a) = \tp(b/Aa)$. By
    compactness, there is an indiscernible sequence
    $\lseq{a}{i}{\om}$ in $\tp(a/A)$ such that $q(x) = \bigcup_{i<\om}
    p(x,a_i)$ is consistent (and, moreover, $a_0 = a$). Let $b' \models q(x)$. Clearly $a=a_0 \not\in \acl(Ab')$, since $a_i
    \equiv_{Ab'} a_0$ for all $i$. This contradicts the assumptions.

\item follows easily from (ii).

%\item By the definitions.

\end{enumerate}
\end{prf}

In view of (iii) above, we define in general

\begin{definition}
    A type $\tp(b/B)$ strongly divides over $A$ if $B$ is
    nonalgebraic over $A$, but is algebraic over $Ab'$ for every $b'
    \models \tp(b/A)$.
\end{definition}

It may be good to point out that the definition of $p(x,a)$ strong
dividing over $A$ is \emph{not} equivalent to having a strong
dividing formula. The main point is that strong dividing is quite
sensitive to the parameters we name, which is not very common in
model theory (it is not closed under elementary equivalence) but
which is quite useful when working with \th-forking.

\medskip

Recall that a formula $\ph(x,y)$ is called \emph{stable} if it does
not have the order property (see \cite{shelahbook}).

\begin{fact}\label{forking NOP}
If a stable formula $\ph(x,y)$ witnesses that a type $p(x,a)$ forks
over $A$, then there is a $\ph$-formula witnessing that $p(x,a)$
\th-forks over $A$.

In particular, in any stable theory the concepts of \th-forking and
forking coincide.
\end{fact}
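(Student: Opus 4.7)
The plan is to locate $D \supseteq A$ over which $\ph(x,a)$ strongly divides; by definition this yields a $\ph$-formula witnessing \th-dividing (and hence \th-forking) of $p(x,a)$ over $A$. The reverse direction implicit in the ``in particular'' clause is routine: a formula strongly dividing over $D \supseteq A$ furnishes infinitely many $A$-conjugates of $a$ with a $k$-inconsistent $\ph$-system, and Remark~\ref{strong dividing}(ii) then extracts an $A$-indiscernible witness for ordinary dividing.

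For the forward direction, I would first invoke the classical Shelah fact that, for stable $\ph$, $\ph$-forking coincides with $\ph$-dividing: this yields an $A$-indiscernible sequence $\langle a_i : i<\omega\rangle$ with $a_0 = a$ and $\{\ph(x,a_i) : i<\omega\}$ $k$-inconsistent for some $k$. Using stability once more I may refine this to a Morley sequence over $A$, so that each $a_i$ is non-forking independent from $\{a_j : j\ne i\}$ over $A$; in particular $a \notin \acl(A \cup \{a_j : j>0\})$.

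Now set $D := A \cup \{a_j : j>0\}$. The non-algebraicity clause of strong dividing holds by Morley-ness. For the $k$-inconsistency of $\{\ph(x,a') : a' \models \tp(a/D)\}$, given any distinct $a'_1,\ldots,a'_k \models \tp(a/D)$, I would use stationarity of $\tp(a/D)$ --- a consequence of stability given that $D$ contains a Morley set over $A$ --- to iteratively substitute $a'_1,\ldots,a'_k$ for $a_1,\ldots,a_k$ in the Morley sequence while preserving $A$-indiscernibility. The substituted sequence then inherits the $k$-inconsistent pattern of its $\ph$-system, so $\bigwedge_{j=1}^k \ph(x,a'_j)$ is inconsistent.

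The principal obstacle is this substitution step: one must verify that \emph{every} distinct $k$-tuple of realizations of $\tp(a/D)$ --- not only those that happen to be mutually independent --- participates uniformly in the $k$-inconsistency. Stability is precisely what makes this possible, via stationarity of non-forking extensions of types with a Morley base; in a general unstable setting, distinct realizations of the same type may exhibit radically different joint behavior and the argument breaks down.
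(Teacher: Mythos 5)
Your overall strategy --- reduce to $\ph$-dividing, extract a Morley sequence $\langle a_i : i<\omega\rangle$, and then claim that $\ph(x,a_0)$ strongly divides over $D = A\cup\{a_j : j>0\}$ --- breaks at exactly the step you yourself flag as the principal obstacle, and stability does not rescue it. Stationarity of $\tp(a_0/D)$ controls realizations of its \emph{unique nonforking extension}; it says nothing about a pair of realizations $a'_1,a'_2$ of $\tp(a_0/D)$ that are dependent over $D$, and such pairs cannot in general be substituted into the Morley sequence while preserving its type. Concretely, let $T$ be the (stable) theory of a single equivalence relation $E$ with infinitely many infinite classes, $\ph(x,y)=E(x,y)$, $A=\emptyset$. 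Then $E(x,a)$ divides over $\emptyset$ and a Morley sequence consists of representatives of pairwise distinct classes; but for \emph{any} $D$ with $a\notin\acl(D)$ one can choose arbitrarily many distinct $a'_1,\dots,a'_k\models\tp(a/D)$ lying in one common class, and $\bigwedge_{j}E(x,a'_j)$ is consistent. So $E(x,a)$ does not strongly divide over your $D$ (indeed over no set at all), and your construction never produces a \th-dividing witness with parameter $a$ itself.

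The missing idea is that the witnessing instance must be taken over a \emph{canonical parameter} rather than over $a$: in the example one must pass to the imaginary $a/E$ and use the formula $x/E=a/E$, which does strongly divide over $\emptyset$. This is precisely why the fact is stated for a stable formula $\ph$ and is proved (in Lemma 5.1.1 of \cite{onshuus}, which is all the present paper offers by way of proof) via local stability: one takes the canonical parameter $c$ of the $\ph$-definition of a completion of $p(x,a)$, observes that forking over $A$ forces $c\notin\acl(A)$ with unboundedly many $A$-conjugates, and uses the boundedness of the number of $\ph$-types to obtain $k$-inconsistency of the corresponding $\ph$-formulas over those conjugates. The standing hypothesis $T=T^{eq}$ is essential here and is invisible in your argument. (Your remarks on the converse direction --- that strong dividing yields ordinary dividing via Remark \ref{strong dividing}(ii), so \th-forking implies forking --- are fine.)
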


\begin{proof}
This is Lemma 5.1.1 in \cite{onshuus}.
\end{proof}

As with stable theories, for many of our results we will need the
existence of a global rank based on the independence notion, which
in this case corresponds to \th-forking.

\begin{definition}
Let $M$ be a model. We will define the \emph{$\uth$-rank} to be the
foundation rank of the order given by the \th-forking relation on
types consistent with $M$. A theory $T$ will be called
\emph{super-rosy} whenever the $\uth$-rank of any type in any model
of $T$ is ordinal valued.
\end{definition}

\begin{fact}\label{lascar}
Let $T$ be a super-rosy theory and let $a,b,A$ be subsets of a model
$M$ of $T$.

Then
\[
\uth\left(tp\left(b/aA\right)\right)+\uth\left(tp\left(a/A\right)\right)
<\uth\left(tp\left(ab/A\right)\right)<
\uth\left(tp\left(b/aA\right)\right)\oplus
\uth\left(tp\left(a/A\right)\right).
\]

\end{fact}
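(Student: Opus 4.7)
The approach is to mirror the classical Shelah--Lascar proof of the analogous inequality for Shelah's $U$-rank in stable theories (and its extension by Kim--Pillay to simple theories), with \th-forking replacing forking throughout. The inputs needed are the standard properties of \th-forking established in \cite{onshuus}: extension, transitivity, and symmetry of \th-independence over algebraically closed bases in rosy theories, together with base monotonicity (\th-forking over a larger base implies \th-forking over a smaller one, which is immediate from the definitions of strong dividing and \th-dividing).

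For the lower bound, I would argue by transfinite induction on $\beta$ that
\[
\uth(\tp(b/Aa)) \geq \beta \;\Longrightarrow\; \uth(\tp(ab/A)) \geq \beta + \uth(\tp(a/A)).
\]
The base case $\beta=0$ is immediate, since $b \in \acl(Aa)$ forces $\uth(\tp(ab/A)) = \uth(\tp(a/A))$. For the successor step, pick $D \supseteq Aa$ witnessing $\uth(\tp(b/D)) \geq \gamma$ with $\tp(b/D)$ \th-forking over $Aa$; any formula in $\tp(b/D)$ witnessing this \th-forking also witnesses that $\tp(ab/D)$ \th-forks over $A$ (by base monotonicity), so the induction hypothesis applied inside $D$ yields $\uth(\tp(ab/A)) \geq (\gamma+1) + \uth(\tp(a/A))$. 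The limit case is routine.

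For the upper bound, the induction is on $\delta = \uth(\tp(ab/A))$: given a \th-forking extension $\tp(ab/D)$ with $\uth(\tp(ab/D))$ just below $\delta$, one distinguishes two cases. Either $\tp(a/D)$ already \th-forks over $A$, in which case the $a$-coordinate absorbs the drop and $\uth(\tp(a/D)) < \uth(\tp(a/A))$; or $a$ is \th-independent of $D$ over $A$, in which case symmetry and transitivity of \th-independence (over a suitable algebraically closed base) force $\tp(b/Da)$ to \th-fork over $Aa$, so the $b$-coordinate absorbs the drop with $\uth(\tp(b/Da)) < \uth(\tp(b/Aa))$. Applying the induction hypothesis to $\tp(ab/D)$ and bookkeeping with the Hessenberg sum $\oplus$ (whose use is essential, since the order in which the two coordinates contribute at different stages cannot be controlled) delivers the required bound.

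The main obstacle is the upper bound, because the coordinate-by-coordinate decomposition must be executed in a setting where symmetry and transitivity of \th-forking hold cleanly, which in the rosy context generally means enlarging $A$ to an algebraically closed set (harmless for the ordinal rank computation, but requiring bookkeeping to verify that the relevant ranks do not change). The detailed argument is carried out in \cite{onshuus}, essentially as a careful transcription of Lascar's original computation; I would follow that presentation rather than attempt a new route.
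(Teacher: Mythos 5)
The paper itself does not prove this statement: it is quoted as a Fact whose entire proof is the citation ``Theorem 4.1.10 in \cite{onshuus}'', which is the same source you ultimately defer to, so at that level your proposal and the paper coincide, and your overall plan (Lascar's computation transcribed to $\uth$-rank, using extension, base monotonicity, symmetry and transitivity of \th-independence) is the right one.

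However, the sketch of the lower bound contains a step that would fail as written. You induct on $\beta$ with the claim $\uth(\tp(b/Aa))\ge\beta\Rightarrow\uth(\tp(ab/A))\ge\beta+\uth(\tp(a/A))$, and in the successor step you take $D\supseteq Aa$ with $\tp(b/D)$ \th-forking over $Aa$ and $\uth(\tp(b/D))\ge\gamma$, then ``apply the induction hypothesis inside $D$''. But the hypothesis applied over the base $D$ yields $\uth(\tp(ab/D))\ge\gamma+\uth(\tp(a/D))$, and $\uth(\tp(a/D))=0$ because $a\in D$ (and you cannot arrange the witness $D$ to avoid containing $a$, nor to be \th-independent from $a$ over $A$). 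This gives only $\uth(\tp(ab/A))\ge\gamma+1$, far short of $(\gamma+1)+\uth(\tp(a/A))$. The correct orientation -- the one in \cite{onshuus} and in the classical Lascar argument -- is to induct on the \emph{right-hand} summand: show by induction on $\alpha$ that $\uth(\tp(a/A))\ge\alpha$ implies $\uth(\tp(ab/A))\ge\uth(\tp(b/Aa))+\alpha$; in the successor step pick $B\supseteq A$ with $a\nthind_A B$ and $\uth(\tp(a/B))\ge\gamma$, use \emph{extension} to move $b$ so that $b\thind_{Aa}B$ (hence $\uth(\tp(b/Ba))=\uth(\tp(b/Aa))$ is preserved), and apply the hypothesis over $B$; the extra forking step of $\tp(ab/B)$ over $A$ then lands on the correct side of the non-commutative ordinal sum. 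Your case split for the upper bound, by contrast, is the standard and correct one.
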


\begin{proof}
Theorem 4.1.10 in \cite{onshuus}.
\end{proof}

We will need the following easy but important Observation. It will
allow us to understand how far we need to extend the types to get
\th-dividing from \th-forking and strong dividing from
\th-dividing; it will be key for the proof of the decomposition
theorem for a type of finite \th-rank in Section \ref{finite uth
rank}. The proof is quite close to the proof of Lemmas 3.1, 3.2
and 3.4 in \cite{HaOn}. However, we prove (and need) a slightly
different result, so we include a proof.

\begin{observation}\label{thorn dividing}
\begin{enumerate}

Let $M$ be a model of a rosy theory $T$, and let $a,b,A$ be tuples
(and sets) in $M$. Then the following hold.

\item Let $p(x,a)$ be a type over $Aa$ which \th-forks over $A$.
Then there is a non-\th-forking extension $p(x,a,a')$ of $p(x,a)$
such that $p(x,a')$ \th-divides over $A$.

\item \label{1} Let $a,b$ be elements and $A$ be a set such that
$tp(b/Aa)$
%is a type of ordinal valued $\uth$-rank which
\th-divides over $A$. Then there is some $e$ such that
$b\thind_{Aa} e$ and such that $tp(b/Aea)$ strongly divides over
$Ae$.

In particular, if $\tp(b/Aa)$ is a type of ordinal valued
$\uth$-rank, then $\uth(tp(b/Aae))<\uth(tp(b/Ae))$.
\end{enumerate}

\end{observation}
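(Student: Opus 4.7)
My plan is to handle both parts by unwinding the syntactic definitions of \th-forking/\th-dividing/strong dividing and then appealing to the extension and symmetry properties of the $\thind$ relation in rosy theories (from \cite{onshuus}).

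For (1), fix a formula $\ph(x,a)\in p$ together with $\psi_1(x,b_1),\dots,\psi_n(x,b_n)$ witnessing \th-forking of $p$ over $A$, so that $\ph(x,a)\vdash\bigvee_i\psi_i(x,b_i)$ with each $\psi_i(x,b_i)$ \th-dividing over $A$. Pick $c\models p(x,a)$ and, by extension for $\thind$, realize the type $\tp(b_1\dots b_n/Aa)$ by a tuple $(b'_1,\dots,b'_n)$ satisfying $(b'_1,\dots,b'_n)\thind_{Aa}c$; symmetry of $\thind$ in rosy theories gives $c\thind_{Aa}(b'_1,\dots,b'_n)$. Because the implication $\ph(x,a)\vdash\bigvee_i\psi_i(x,b_i)$ is preserved under any automorphism fixing $Aa$, we also have $\ph(x,a)\vdash\bigvee_i\psi_i(x,b'_i)$, and evaluating at $c$ yields $\psi_i(c,b'_i)$ for some $i$. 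Setting $a':=b'_i$ and $p(x,a,a'):=\tp(c/Aaa')$ then provides a non-\th-forking extension of $p(x,a)$ whose reduct $p(x,a')=\tp(c/Aa')$ contains the \th-dividing formula $\psi_i(x,a')$ (an $A$-conjugate of $\psi_i(x,b_i)$), so it \th-divides over $A$.

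For (2), carry out the same kind of move for the witness to strong dividing. Let $\ph(x,a)\in\tp(b/Aa)$ witness \th-dividing over $A$, and choose $D\supseteq A$ over which $\ph(x,a)$ strongly divides. Enumerate $D$ as a tuple $e_0$. By extension for $\thind$ applied to $\tp(e_0/Aa)$ over $Aab$, find $e\equiv_{Aa}e_0$ with $e\thind_{Aa}b$; symmetry gives $b\thind_{Aa}e$. Any automorphism in $\Aut(\fC/Aa)$ sending $e_0$ to $e$ transports strong dividing over $Ae_0$ to strong dividing over $Ae$, so $\ph(x,a)$ strongly divides over $Ae$. Since $\ph(x,a)\in\tp(b/Aea)$, this forces $\tp(b/Aea)$ to strongly divide over $Ae$.

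For the ``in particular'' clause, strong dividing of $\tp(b/Aae)$ over $Ae$ implies \th-forking over $Ae$; on the other hand $\uth(\tp(b/Aae))\le\uth(\tp(b/Aa))<\infty$ by monotonicity under extension, so the foundation-rank definition of $\uth$ forces $\uth(\tp(b/Aae))<\uth(\tp(b/Ae))$. The main technical ingredient throughout is the extension (and symmetry) of $\thind$ in rosy theories, established in \cite{onshuus}; once these are granted, both claims follow by careful bookkeeping with the syntactic definitions. The one subtle point is making sure we arrange $e\equiv_{Aa}e_0$ (not merely over $A$), so that strong dividing, which lives at the level of $\tp(a/\,\cdot\,)$, really transfers to the new base.
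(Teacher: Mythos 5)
Your proof is correct and takes essentially the same route as the paper's: in both parts you use extension (plus symmetry) of \th-independence to replace the witnessing parameters by an $Aa$-conjugate copy that is \th-independent from the realization over $Aa$, and then transfer \th-dividing (resp.\ strong dividing) along that $Aa$-conjugation. The only difference is cosmetic bookkeeping (you also spell out the rank inequality in the ``in particular'' clause, which the paper leaves implicit).
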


\begin{proof}
\noindent (i) Let $p(x,a)$ be as in statement (i) of the Observation
and let $b\models p(x,a)$.

By definition, there are finitely many formulas $\ph_i(x,a_i)$ such
that

\[
p(x,a)\vdash \bigvee_{i=1}^n \ph_i(x,a_i)
\]
and $\ph(x,a_i)$ \th-divides over $A$. By extension of
\th-independence we know that there are $a_1',\dots, a_n'\models
\tp(a_1\dots a_n/Aa)$ such that $b\thind_{Aa} a_1'\dots a_n'$.

So $\tp(b/Aaa_1'\dots a_n')\vdash \ph_m(x,a_m')$ for some $m$;
defining $a':=a_m'$ and $p(x,a,a'):=tp(b/Aaa')$, we get by
construction that $p(x,a,a')$ satisfies the statement of the
Observation.

\medskip

\noindent (ii) Let $a,b$ and $A$ be as in statement (ii) of the
Observation. By definition of \th-dividing there is some $e'$ and
some $\ph(x,a)\in tp(b/Aa)$ such that $\ph(x,a)$ strongly divides
over $Ae'$. Note that in particular $a \not\in\acl(Ae')$.

Let $e\models \tp(e'/Aa)$ be such that $b\thind_{Aa} e$. Since
$e\models \tp(e'/Aa)$, strong dividing is preserved. Moreover,
$a\not\in\acl(Ae)$, hence by the definition (alternatively, by
Remark \ref{strong dividing}(v)), $\tp(b/Aae)$ strongly divides over
$Ae$.

%Also, clearly
%\[
%\uth\left(\tp\left(b/Aa\right)\right)=\uth\left(\tp\left(b/Aae\right)\right);
%\]
%thus $e$ is as required.
\end{proof}

Finally, we will prove the following well known fact.

\begin{fact}\label{fct:Morley}
    Let $a \thind_A B$. Then there is a \th-Morley sequence $I$
    over $B$ based on $A$ starting with $a$.
\end{fact}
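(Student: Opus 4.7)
The plan is the standard two-step construction: first build a long \th-nonforking sequence starting with $a$, then extract an indiscernible subsequence compatible with $a$ as its initial element.

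Using the extension property for \th-forking, I would construct by induction a sequence $\langle c_i : i < \kappa \rangle$ with $c_0 = a$, each $c_i \models \tp(a/A)$, and $c_i \thind_A B c_{<i}$. The base case is exactly the hypothesis $a \thind_A B$, while each successor stage uses extension to pick a non-\th-forking realization of $\tp(a/A)$ over $A \cup B \cup c_{<i}$. Taking $\kappa$ sufficiently large, a standard application of Erd\H{o}s-Rado (extraction of indiscernibles, Fact~\ref{Erdos Rado}) yields a $Ba$-indiscernible sequence $\langle d_j : j < \omega \rangle$ whose finite increasing subtuples realize the same $Ba$-types as corresponding increasing subtuples from $\langle c_i : 1 \le i < \kappa \rangle$. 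The key technical point is to extract indiscernibility over $Ba$ rather than merely over $B$.

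Setting $I = \langle a, d_0, d_1, \ldots \rangle$, there are three things to verify. For $B$-indiscernibility, increasing subtuples not involving $a$ have the same type over $B$ by $B$-indiscernibility of $\langle d_j \rangle$, while those involving $a$ agree by $Ba$-indiscernibility; these cases are compatible since the first is a restriction of the second. That each element realizes $\tp(a/A)$ is immediate from the construction. For \th-independence over $A$, monotonicity gives $c_{i_j} \thind_A B c_0 c_{i_0} \ldots c_{i_{j-1}}$ (recalling $c_0 = a$), which transfers through $Ba$-indiscernibility to $d_j \thind_A B a d_{<j}$; and $a \thind_A B$ is the hypothesis itself. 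The only genuine obstacle is ensuring that the resulting sequence begins exactly with the given $a$ rather than with some $A$-conjugate realizing only $\tp(a/A)$; this is precisely why we extract indiscernibility over $Ba$ rather than over $B$ alone, allowing us to seamlessly prepend $a$ without disturbing either indiscernibility or \th-independence.
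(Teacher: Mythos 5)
Your overall two-step strategy (a long non-\th-forking sequence followed by Erd\H{o}s--Rado extraction) is the same as the paper's, but the step that makes the sequence start with $a$ has a genuine gap, and it is exactly the point the paper handles differently. A first problem: your $c_i$ for $i\ge 1$ are only required to realize $\tp(a/A)$, so the extracted $d_j$ realize some common type over $B$ extending $\tp(a/A)$ which need not be $\tp(a/B)$; already $\tp(d_0/B)=\tp(a/B)$ can fail, so $\langle a\rangle^\frown\langle d_j\rangle$ need not even consist of realizations of a single type over $B$. The construction has to be carried out inside $\tp(a/B)$, which is legitimate because $\tp(a/B)$ does not \th-fork over $A$ and hence has non-\th-forking extensions over $Bc_{<i}$.

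Second, and more seriously, prepending $a$ to a $Ba$-indiscernible sequence does not produce a $B$-indiscernible sequence, so extracting over $Ba$ does not do the job you assign to it. $Ba$-indiscernibility of $\langle d_j\rangle$ gives $\tp(a\,d_{j_1}\cdots d_{j_k}/B)=\tp(a\,d_{i_1}\cdots d_{i_k}/B)$ for increasing tuples, but indiscernibility of the concatenation also requires $\tp(a\,d_1\cdots d_k/B)=\tp(d_0\,d_1\cdots d_k/B)$, i.e.\ that $a$ and $d_0$ be interchangeable over $Bd_1\cdots d_k$; neither $B$- nor $Ba$-indiscernibility of the tail gives this, and the implication you invoke is false in general (in DLO with $B=\emptyset$, $a=0$ and the $\{a\}$-indiscernible sequence $d_j=1/(j+1)$ satisfy $a<d_0$ but $d_0>d_1$). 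The paper's proof sidesteps the issue: it extracts a $B$-indiscernible $\omega$-sequence $I$ all of whose elements realize $\tp(a/B)$, notes that $I$ is already a \th-Morley sequence over $B$ based on $A$, and then applies an automorphism over $B$ carrying the first element of $I$ to $a$; this preserves both the indiscernibility and the non-\th-forking conditions, whereas prepending does not. With these two corrections your argument becomes the paper's.
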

\begin{proof}
    First, construct a non-\th-forking sequence $I' = \lseq{a'}{i}{\mu}$
    in $\tp(a/B)$ based
    on $A$ starting with $A$ by the standard construction, that is,
    $a'_0 = a$, $a'_i \equiv_Ba$, $a'_i \thind_A Ba'_{<i}$. Moreover, make $\mu$
    large enough so that using Erd\"os-Rado
    (more precisely, Fact \ref{Erdos Rado}, see also Remark \ref{rmk:indisc}) one can find
    %indiscernible sequences of size $\lam = |A|+|B|+|T|$ whenever a sequence of size $\mu$ is provided
    %(the reader can look at the discussion around Fact \ref{Erdos Rado}
    %for more details on this).
    $I$ which is an $\om$-sequence, $B$-indiscernible and
    every  $n$-type of $I$ over $B$ ``appears'' in $I'$. Clearly $I$
    is a \th-Morley sequence over $B$ based on $A$. Moreover, since
    every element of $I'$ satisfies $\tp(a/B)$, so does every
    element of $I$, so by applying an automorphism over $B$ we may
    assume that $I$ starts with $a$.
\end{proof}

\subsection{Dependent theories and generically stable types}

Recall that a theory $T$ is called \emph{dependent} if there does
not exist a formula which exemplifies the independence property. We
are mostly going to use the following equivalent definition:

\begin{fact}\label{fct:depend}
    $T$ is dependent if and only if there do not exist an
    indiscernible sequence $I = \lseq{a}{i}{\lam}$, a formula
    $\ph(x,y)$ and $\b$ such that both
    $$\set{i \colon \models \ph(a_i,b)}$$ and
    $$\set{i \colon \models \neg\ph(a_i,b)}$$
    are unbounded in $\lam$.
\end{fact}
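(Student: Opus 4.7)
My plan is to prove both directions by contrapositive, using compactness together with Erd\H{o}s--Rado style extraction of indiscernible sequences.

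For the direction $(\Leftarrow)$, starting from a formula $\ph(x,y)$ with the independence property, compactness lets me realize IP on sequences of any length, so I take $(a_i)_{i<\lam}$ and $b_s$ for every $s\subseteq \lam$ with $\models \ph(a_i,b_s)$ iff $i\in s$, for sufficiently large $\lam$. Applying Fact~\ref{Erdos Rado} to $(a_i)_{i<\lam}$ produces an $\om$-subsequence $(a_{f(i)})_{i<\om}$ indiscernible over $\emptyset$, where $f\colon \om\to\lam$ is increasing. Setting $E := \set{f(2n)\colon n<\om} \subseteq \lam$ and $b := b_E$, I obtain $\models \ph(a_{f(i)},b)$ exactly when $i$ is even, so both $\set{i<\om\colon \models \ph(a_{f(i)},b)}$ and its complement are infinite, violating the right-hand condition.

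For the direction $(\Rightarrow)$, I again argue contrapositively. Given an indiscernible $(a_i)_{i<\lam}$, a formula $\ph$, and $b$ with $S:=\set{i<\lam\colon \models \ph(a_i,b)}$ and $\lam\setminus S$ both unbounded, I inductively pick $i_0<i_1<\cdots$ in $\lam$ with $i_{2n}\in S$ and $i_{2n+1}\notin S$. The subsequence $(a_{i_n})_{n<\om}$ is still indiscernible and satisfies $\models \ph(a_{i_n},b)$ iff $n$ is even. The goal is to turn this alternation into IP for $\ph$. Given disjoint finite $F_0,F_1\subset \om$, I enumerate $F_0\cup F_1$ increasingly as $j_0<\cdots<j_m$ and choose $k_0<\cdots<k_m$ in $\om$ with $k_l$ even iff $j_l\in F_1$ (always possible by interleaving). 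Indiscernibility of $(a_{i_n})$ then makes $(a_{i_{k_0}},\dots,a_{i_{k_m}})$ and $(a_{i_{j_0}},\dots,a_{i_{j_m}})$ conjugate over $\emptyset$, so an automorphism carries $b$ to some $b'$ with $\models \ph(a_{i_{j_l}},b')$ iff $j_l\in F_1$. Compactness then promotes these finite patterns to a parameter $b_s$ for each $s\subseteq \om$ with $\models \ph(a_{i_n},b_s)$ iff $n\in s$, exhibiting IP for $\ph$ and contradicting dependence.

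The only nonroutine step is this last pattern-matching argument: the alternation along an indiscernible has to be leveraged via automorphisms of the monster model to realize every finite $\ph$-pattern, after which compactness assembles arbitrary infinite patterns. Everything else reduces to Erd\H{o}s--Rado extraction and standard compactness.
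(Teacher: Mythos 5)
The paper states this Fact without proof, quoting it as Shelah's classical characterization of dependence, so there is no in-paper argument to compare you against; judged on its own, your proof is the standard one and is essentially correct. The right-to-left contrapositive (alternating indices chosen from the two unbounded sets, matching parities of an increasing tuple, transporting $b$ by an automorphism of the monster, then compactness) is exactly right. The only blemish is in the other direction: Fact \ref{Erdos Rado} does not produce a literal \emph{subsequence} $(a_{f(i)})_{i<\om}$ of the original sequence, only an indiscernible sequence $(c_i)_{i<\om}$ each of whose finite types is realized by some increasing tuple from the original, so you cannot simply set $b := b_E$. The repair is immediate: the assertion ``for every $s\subseteq\set{0,\dots,n-1}$ there is $y$ with $\ph(x_i,y)$ iff $i\in s$'' is expressible in the $n$-type of the tuple and hence transfers to $(c_i)_{i<\om}$, after which the alternating pattern $\set{\ph(c_i,y)\colon i \text{ even}}\cup\set{\neg\ph(c_i,y)\colon i \text{ odd}}$ is finitely satisfiable and therefore realized by some $b$ in the monster. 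With that one-line fix the argument is complete.
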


In section 4, we will work with the classical Shelah's notions of
dividing, forking and splitting from \cite{shelahbook}. Definitions
and a quick summary of properties can be found in section 2 of
\cite{Us}. In particular, we will use the following easy (but
important) consequence of dependence (due to Shelah, \cite{Sh783}).
%A proof can be found in e.g. in
%\cite{Us}.

%\begin{fact}\label{fact:stabform} (T dependent)
%  If $\inseq{b}{i}{I}$ is an infinite indiscernible set, then
%  %every $\ph(\x,\y)$
%  %is stable for $\seq{\b_i}$ \textbf{You never defined what this notion of stability means!}.
%  %Moreover,
%  for every $\ph(x,y)$
%  there exists $k = k_\ph < \om$ such that for every $c \in \fC$, either
%  $|\set{i \in I \colon \ph(b_i,c)}|<k$
%  or   $|\set{i \in I \colon \neg\ph(b_i,c)}|<k$.
%\end{fact}

\begin{fact}\label{fact:splitfork} (T dependent)
    Strong splitting implies dividing (and therefore forking).
\end{fact}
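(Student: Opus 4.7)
The plan is to show directly that the specific formula witnessing strong splitting already divides over the same base, using a pairing trick and Fact~\ref{fct:depend}; since dividing trivially implies forking, this yields the statement. Unpacking, suppose $\tp(a/B)$ strongly splits over $A \subseteq B$: there is an $A$-indiscernible sequence $\langle b_i : i < \omega \rangle$ in $B$ and a formula $\varphi(x,y)$ with $\models \varphi(a,b_0) \wedge \neg\varphi(a,b_1)$. I will exhibit dividing over $A$ of the formula
\[
\psi(x; b_0,b_1) \;:=\; \varphi(x,b_0) \wedge \neg\varphi(x,b_1),
\]
which belongs to $\tp(a/B)$.

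For the dividing witness I pair up the sequence: $\langle (b_{2i},b_{2i+1}) : i < \omega \rangle$ remains $A$-indiscernible as a sequence of pairs, because any two increasing tuples of indices of the shape $(2i_1,2i_1{+}1,\dots,2i_n,2i_n{+}1)$ are themselves increasing as tuples of single indices; and its first term is $(b_0,b_1)$. Hence it suffices to prove that
\[
\Sigma(x) \;:=\; \bigl\{\, \varphi(x,b_{2i}) \wedge \neg\varphi(x,b_{2i+1}) \;:\; i < \omega \,\bigr\}
\]
is inconsistent; the usual compactness plus indiscernibility argument then upgrades this to the $k$-inconsistency formally demanded by the definition of dividing.

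The inconsistency of $\Sigma$ is where dependence enters, and it is essentially the content of Fact~\ref{fct:depend}. If some $a^{*}$ realised $\Sigma$, then in the $A$-indiscernible sequence $\langle b_i : i < \omega \rangle$ we would have $\models \varphi(a^{*},b_i)$ at every even $i$ and $\models \neg\varphi(a^{*},b_i)$ at every odd $i$, so both truth sets would be unbounded in $\omega$, contradicting Fact~\ref{fct:depend} with $\lambda = \omega$ (applied to the sequence $\langle b_i \rangle$, the formula $\varphi$, and $a^{*}$ in the role of the parameter $b$). I do not anticipate a real obstacle: once the pairing is in place the proof is a one-line invocation of Fact~\ref{fct:depend}, and the only points to verify are preservation of $A$-indiscernibility under pairing (immediate) and the standard passage from inconsistency to $k$-inconsistency (also immediate).
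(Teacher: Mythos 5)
Your proof is correct, and it is the standard argument: pairing the indiscernible sequence preserves $A$-indiscernibility, the formula $\varphi(x,b_0)\wedge\neg\varphi(x,b_1)$ lies in $\tp(a/B)$, inconsistency of $\Sigma$ is immediate from Fact \ref{fct:depend} via the alternation along $\langle b_i\rangle$, and compactness plus indiscernibility upgrades this to $k$-inconsistency. The paper does not prove this statement at all --- it is quoted as a Fact attributed to Shelah \cite{Sh783} --- so there is no proof to compare against; yours fills the gap exactly as one would expect.
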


We will now define and give the basic properties of generically
stable types.

\begin{definition}\label{dfn:genstab}
    We call a type $p\in \tS(A)$ \emph{generically stable} if
    %it is definable over $\acl(A)$ and
    every
    Morley sequence in it is an indiscernible set.
\end{definition}

%\begin{rmk}
%    If a type is stable, then it is generically stable (see e.g.
%    \cite{Us} section 6).
%\end{rmk}

The following key properties of generically stable types can be
found in \cite{Us}:

\begin{fact}\label{fact:genstab} (T dependent)
\begin{enumerate}
\item
    $p \in \tS(A)$ is generically stable if and only if some
    Morley sequence in $p$ is an indiscernible set.
    %if and only
    %if every non-forking sequence in $p$ is an indiscernible set.
\item
    Let $p$ be a generically stable type. If $p \in \tS(A)$, then it is definable over $\acl(A)$. If $p$
    is definable over $A$ (e.g. $A = \acl(A)$), then $p$ is
    stationary (in the sense that it has unique non-forking extensions).
\item
    Let $p$ be generically stable.
    Non-forking defines on the set of realizations of $p$ a stable
    independence relation (that is, a relation
    satisfying all the axioms of a stable independence relation).
\end{enumerate}
\end{fact}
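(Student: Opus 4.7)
The plan is to introduce averaging along totally indiscernible Morley sequences and use it to extract all three parts. For part (1), the forward direction is the definition. For the converse, I assume that some Morley sequence $I = \lseq{a}{i}{\om}$ in $p$ is an indiscernible set and, for any $B \supseteq A$, define the average $\Av(I,B)$ to contain $\ph(x,b)$ whenever $\models \ph(a_i,b)$ for all but finitely many $i$. Fact \ref{fct:depend} forces exactly one of $\ph(x,b), \neg\ph(x,b)$ to land in $\Av(I,B)$ for each formula (otherwise $\ph$ would witness the independence property along the indiscernible $I$), so $\Av(I,B)$ is a complete type extending $p$. Any realization $c$ of $\Av(I,B)$ can be prepended to $I$ while preserving indiscernibility as a set over $B$, which shows $\tp(c/B)$ does not fork over $A$; conversely any non-forking extension of $p$ to $B$ must agree with $\Av(I,B)$, since two disagreeing non-forking extensions realized inside a common long Morley sequence would produce an alternation of truth values, contradicting dependence.

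From this characterization, any Morley sequence $J = \lseq{b}{j}{\om}$ in $p$ satisfies $b_j \models \Av(I, A b_{<j})$ for each $j$; by the symmetry built into the characterization $J$ is itself an indiscernible set, completing part (1). For part (2), averaging yields definability: for each $\ph(x,y)$ pick $k$ so that along any indiscernible sequence of realizations of $p$ the truth value of $\ph(a_i,b)$ stabilizes within $k$ steps (possible by dependence), and then $\ph(x,b) \in p$ iff a majority among any $2k+1$ independent realizations of $p$ satisfies $\ph(\cdot,b)$; indiscernibility of $I$ over $A$ makes the parameters involved interchangeable, producing a definition over $\acl(A)$. If $p$ is already definable over $A$, the defining schema provides a canonical extension to every $B \supseteq A$, which is automatically the unique non-forking one, giving stationarity. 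For part (3), the axioms of a stable independence relation on realizations of $p$ (symmetry, transitivity, extension, boundedness, stationarity of non-forking extensions) all reduce to the fact that non-forking on tuples of realizations of $p$ corresponds to appearing as a segment of a Morley sequence in $p$, which by total indiscernibility is unordered and permutation-invariant.

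The main obstacle will be part (1), specifically verifying that $\Av(I,B)$ is genuinely the unique non-forking extension of $p$ to $B$ and is independent of the choice of totally indiscernible Morley sequence $I$. Dependence through Fact \ref{fct:depend} is the critical input, while total indiscernibility of $I$ is exactly what upgrades an otherwise merely \emph{invariant} average construction into a complete type with the unique-non-forking-extension property; once this foundation is in place, parts (2) and (3) are relatively routine verifications using the averaging machinery.
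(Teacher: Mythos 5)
The paper does not actually prove this statement; it is imported verbatim as a known Fact from \cite{Us}, so there is no internal argument to compare yours against. Judged on its own, your proposal has a genuine gap at the central step of part (1): the claim that $\Av(I,B)$ is \emph{the unique} non-forking extension of $p$ to $B$, and hence that every Morley sequence $J$ in $p$ satisfies $b_j\models\Av(I,Ab_{<j})$. Both claims are false over a general base $A$. Let $T$ be the theory of an equivalence relation $E$ with exactly two infinite classes and let $p$ be the unique non-algebraic $1$-type over $A=\emptyset$. Then $p$ is generically stable (the theory is $\omega$-stable), and every Morley sequence in $p$ lies inside a single $E$-class and is an indiscernible set; but $p$ has two distinct non-forking extensions to any set meeting both classes, and a Morley sequence $J=\langle b_j\colon j<\omega\rangle$ contained in the class not containing $I$ satisfies $E(b_1,b_0)$ even though $\neg E(x,b_0)\in\Av(I,Ab_0)$. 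The stationarity you assert is precisely what part (2) of the Fact is careful to claim only when $p$ is definable over $A$ (e.g.\ $A=\acl(A)$): over an arbitrary $A$ the average of a fixed totally indiscernible $I$ selects just one of the (mutually $A$-conjugate) non-forking extensions determined by the extensions of $p$ to $\acl(A)$. The correct proof, as in \cite{Us}, must therefore pass to $\acl(A)$: one shows $\Av(I,\fC)$ is definable over $\acl(A)$, is the unique non-forking extension of its restriction to $\acl(A)$, and that an arbitrary Morley sequence in $p$ over $A$ is a Morley sequence in one of the boundedly many such average types. Your argument as written cannot see the sequences $J$ that average to a different conjugate.

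Two further soft spots. In part (2), indiscernibility of $I$ only makes the majority-rule $\ph$-definition invariant under permutations of $I$; to conclude that it is over $\acl(A)$ you still need to show it has only finitely many images under $\Aut(\fC/A)$, which again requires controlling the collection of possible average types rather than a single $I$. And part (3) is not a routine reduction to ``Morley sequences are unordered'': symmetry and transitivity of non-forking on realizations of $p$ are among the substantive results of \cite{Us} and use definability together with finite satisfiability of the global non-forking extension.
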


Note that generic stability is not necessarily closed under
extensions. %In fact, it is quite easy to see (shown in \cite{Us}) that $p$ is
%stable if and only if every extension of it is generically stable.

\subsection{Strong dependence and dp-minimality}

The following definitions were motivated by the notions of strong
dependence of Shelah (see e.g. \cite{shelah863}) and appear in
\cite{Us} and \cite{OnUs}. In the definitions below we denote tuples
by $\x,\a$ (in order to stress the difference between singletons and
finite tuples of arbitrary length).

\begin{definition}\label{dfn:strongdep}
$\left.\right.$
\begin{enumerate}
\item
    A \emph{randomness pattern} of depth $\ka$ for a (partial) type $p$ over a set $A$ is an
    array $\seq{\b_i^\al \colon \al<\ka, i<\om}$ and formulae $\ph_\al(\x,\y_\al)$ for
    $\al<\ka$ such that
    \begin{enumerate}
    \item
        The sequences $I^\al = \seq{\b^\al_i\colon i<\om}$ are
        mutually indiscernible over $A$, that is, $I^\al$ is
        indiscernible over $AI^{\neq \al}$.
    \item
        $\len(\b^\al_i) = \len(\y_\al)$
    \item
        for every $\eta \in {}^\ka\om$, the set
        $$ \Gamma_\eta = \set{\ph_\al(\x,\b^\al_\eta(\al) \colon \al < \ka} \cup
        \set{\neg\ph_\al(\x,\b^\al_i) \colon \al<\ka, i<\om, i\neq \eta(\al)}$$
        is consistent with $p$.
    \end{enumerate}
\item
    A (partial) type $p$ over a set $A$ is called \emph{strongly dependent} if there do not exist
    formulae $\ph_\al(\x,\y_\al)$ for $\al<\om$ and
    sequences \seq{\b^\al_i \colon i<\om} for $\al<\om$
    mutually indiscernible over $A$ such that
    for every $\eta \in {}^\om\om$, the set
    $$ \Gamma_\eta = \set{\ph_\al(\x,\b^\al_{\eta(\al)} \colon \al < \om} \cup
    \set{\neg\ph_\al(\x,\b^\al_i) \colon \al<\om, i\neq \eta(\al)}$$
    is consistent with $p$.

    In other words, $p$ is called strongly dependent if there does not exist a randomness
    pattern for $p$ of depth $\ka = \om$.
\item
    \emph{Dependence rank} (dp-rk) of a (partial) type $p$ over a set $A$
    is the supremum of all $\ka$ such that there exists a randomness pattern
    for $p$ of depth $\ka$.
%   is the minimal $\ka$ such that there does not exist a randomness pattern
%   for $p$ of depth $\ka$.
    \item
    A (partial) type over a set $A$ is called \emph{dp-minimal} if
    dp-rank of $p$ is 1.

    In other words, $p$ is dp-minimal if there does not exist a randomness pattern
    for $p$ of depth $2$.
\item
    A theory is called strongly dependent/dp-minimal if the partial
    type $x=x$ is (here $x$ is a singleton).
\item
    Let $T$ be dependent. A type $p$ is called \emph{strongly stable} if it is strongly dependent and
    generically stable.
\end{enumerate}
\end{definition}

\begin{remark}
    Note that Shelah basically shows in \cite{shelah863} Observation 1.7 that
    if there exists a type $p(\x)$ which is not strongly dependent, then
    there exists such a type $p'(x)$ with $x$ being a
    \emph{singleton}. Therefore if there exists an non-strongly
    dependent type, then $T$ is not strongly dependent and the
    definitions above make sense.
\end{remark}

Note that if in the definition of a randomness pattern all formulae
are the same, we get the independence property:

\begin{observation}\label{obs:depstrong}
    A theory $T$ is dependent if and only if it does not admit a
    randomness pattern of some/any infinite depth with
    $\ph_\al(\x,\y) = \ph(\x,\y)$ for all $\al$ if and only if $T$ does not admit
    a randomness pattern of depth $|T|^+$.
\end{observation}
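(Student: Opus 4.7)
The plan is to close the cycle $P \Rightarrow Q \Rightarrow R \Rightarrow P$, where $(P)$ is ``$T$ is dependent'', $(Q)$ is ``$T$ admits no randomness pattern of some infinite depth using a single formula'', and $(R)$ is ``$T$ admits no randomness pattern of depth $|T|^+$''. Both the arguments below will work with any infinite $\ka$ in place of $\om$ or $|T|^+$, so the ``some/any'' interchangeability inside $(Q)$ falls out of the cycle automatically.

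For $\lnot Q \Rightarrow \lnot P$, I would take a randomness pattern $\seq{\b^\al_i \colon \al<\om, i<\om}$ with single formula $\ph(\x,\y)$ and witnesses $\seq{a_\eta \colon \eta \in {}^\om\om}$, and read off IP for $\ph$ directly. Set $c_\al := \b^\al_0$, and for each $S \subseteq \om$ let $\eta_S(\al) = 0$ if $\al \in S$ and $1$ otherwise. Then $\models \ph(a_{\eta_S}, c_\al)$ iff $\eta_S(\al) = 0$ iff $\al \in S$, so the sequence $\seq{c_\al}$ is shattered by $\ph$ via $\set{a_{\eta_S} \colon S \subseteq \om}$, exhibiting IP.

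For $\lnot R \Rightarrow \lnot Q$ I would apply a pigeonhole argument: an RP of depth $|T|^+$ involves at most $|T|$ distinct formulae, and regularity of $|T|^+$ then yields $A \subseteq |T|^+$ of cardinality $|T|^+$ on which $\ph_\al$ is some fixed $\ph$. Restricting the array to rows indexed by $A$ preserves mutual indiscernibility (any sub-array of a mutually indiscernible family is again such), and the witness property is inherited because any $\eta \colon A \to \om$ extends arbitrarily to $|T|^+ \to \om$ and the corresponding $a_\eta$ works. The restricted array is a randomness pattern of infinite depth with a single formula.

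Finally, for $\lnot P \Rightarrow \lnot R$, assume $\ph$ has IP. By compactness, for any prescribed cardinal $\mu$ there exist parameters $\seq{c_\xi \colon \xi \in |T|^+ \times \mu}$ such that for every finite $S$ some $a_S$ satisfies $\ph(a_S, c_\xi) \leftrightarrow \xi \in S$. Setting $\b^\al_i := c_{(\al,i)}$ and, for $\eta \in {}^{|T|^+}\mu$, $a_\eta := a_{S_\eta}$ with $S_\eta := \set{(\al,\eta(\al)) \colon \al<|T|^+}$, one immediately has the witness property --- but the rows are not yet mutually indiscernible. The main technical step, and the principal obstacle, is to thin each row down to length $\om$ while simultaneously making the $|T|^+$ rows mutually indiscernible. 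I would do this by iterated Erd\H{o}s--Rado (Fact \ref{Erdos Rado}): choose $\mu$ sufficiently large at the outset so that after $|T|^+$ successive extractions each row still has length $\geq \om$, and at stage $\al$ thin row $\al$ to be indiscernible over the accumulated parameters from the already-processed rows and over long enough portions of the unprocessed ones. The witness property, being a conjunction of first-order statements about \emph{finite} sub-arrays, is preserved by every such extraction, so the resulting trimmed array is a randomness pattern of depth $|T|^+$ with the single formula $\ph$.
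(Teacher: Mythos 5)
Your first two implications are fine: reading off IP from a single-formula randomness pattern via the functions $\eta_S$ is correct, and the pigeonhole/restriction argument from depth $|T|^+$ down to a single-formula pattern of infinite depth is also correct (restriction preserves mutual indiscernibility, and each restricted $\Gamma_\eta$ sits inside some $\Gamma_{\eta'}$ of the full pattern). The gap is exactly where you flag it, in the last step, and you do not actually close it. The iterated extraction scheme you sketch does not work as described: an Erd\"os-Rado extraction does not thin a row to a subsequence but replaces it by new realizations of an extracted EM-type, so being indiscernible over ``long enough portions of the unprocessed rows'' says nothing about the \emph{final} versions of those rows, which are created later and need not meet the designated portions. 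Moreover, the length bookkeeping cannot be arranged for $|T|^+$ stages: whichever row you process first must be long relative to the total length of all the others, the next relative to all still-unprocessed ones, and so on, which forces a strictly decreasing $|T|^+$-sequence of cardinals. This is precisely the difficulty that Lemma \ref{toca0} of the paper handles by a \emph{downward} induction for \emph{finitely many} rows; that device has no analogue for $|T|^+$ rows.

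The step is nevertheless easy, and the paper's one-line proof (``by compactness'') points at the standard route, which avoids mutual indiscernibility as a separate problem altogether. From IP for $\ph(\x,\y)$ one gets, by one application of Erd\"os-Rado to a long finite-shattered family followed by compactness, a \emph{single} indiscernible sequence $\langle c_j \colon j \in O\rangle$ of order type $O = |T|^+\cdot\om$ such that every finite Boolean pattern $\bigwedge_{j\in S}\ph(\x,c_j)\land\bigwedge_{j\in S'}\neg\ph(\x,c_j)$ (with $S,S'$ finite disjoint) is consistent. Now let $I^\al$ be the $\al$-th convex block of order type $\om$. Convex blocks of one indiscernible sequence are automatically mutually indiscernible (for increasing tuples $\bar a,\bar a'$ from $I^\al$ and any tuple $\bar b$ from the other blocks, the merged tuples $\bar a\bar b$ and $\bar a'\bar b$ are order-isomorphic in $O$, hence have the same type), and each $\Gamma_\eta$ is consistent because its finite subsets are among the Boolean patterns above. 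This gives the randomness pattern of depth $|T|^+$ with the single formula $\ph$ directly, with no iterated extraction.
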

\begin{prf}
    By compactness.
\end{prf}

A related notion, which will be convenient for us to consider, was
investigated by Adler in \cite{Ad}. We are going to use a slightly
different terminology (some of it comes from \cite{OnUs}).

\begin{definition}\label{dfn:strong}
$\left.\right.$
\begin{enumerate}
\item
    A \emph{dividing pattern} of depth $\ka$ for a (partial) type $p$ over a set $A$ is an
    array $\seq{\b_i^\al \colon \al<\ka, i<\om}$ and formulae $\ph_\al(\x,\y_\al)$ for
    $\al<\ka$ such that
    \begin{enumerate}
    \item
        The sequences $I^\al = \seq{\b^\al_i\colon i<\om}$ are
        mutually indiscernible over $A$, that is, $I^\al$ is
        indiscernible over $AI^{\neq \al}$.
    \item
        $\len(\b^\al_i) = \len(\y_\al)$
    \item
        for every $\eta \in {}^\ka\om$, the set
        $$ \set{\ph_\al(\x,\b^\al_\eta(\al) \colon \al < \ka} $$
        %\cup
        %\set{\neg\ph_\al(\x,\b^\al_i) \colon \al<\ka, i<\om, i\neq \eta(\al)}$$
        is consistent with $p$.
    \item
        for every $\al<\ka$ there exists $k_\al<\om$ such that the
        set
        $$ \set{\ph_\al(\x,\b^\al_i) \colon i<\om} $$
        is $k_\al$-inconsistent with $p$.
    \end{enumerate}
    \item
        A (partial) type $p$ over a set $A$ is called \emph{strong}
        %if there do not exist
    %formulae $\ph_\al(\x,\y_\al)$ for $\al<\om$ and
   % sequences \seq{\b^\al_i \colon i<\om} for $\al<\om$
   % mutually indiscernible over $A$ such that
   % for every $\eta \in {}^\om\om$, the set
   % $$ \Gamma_\eta = \set{\ph_\al(\x,\b^\al_{\eta(\al)} \colon \al < \om} \cup
   % \set{\neg\ph_\al(\x,\b^\al_i) \colon \al<\om, i\neq \eta(\al)}$$
   % is consistent with $p$.
%    In other words, $p$ is called strongly dependent
    if there does not exist a dividing
    pattern for $p$ of depth $\ka = \om$.
%\item
%    \emph{Dependence rank} (dp-rk) of a (partial) type $p$ over a set $A$
%    is the supremum of all $\ka$ such that there exists a randomness pattern
%    for $p$ of depth $\ka$.
%%   is the minimal $\ka$ such that there does not exist a randomness pattern
%%   for $p$ of depth $\ka$.
%    \item
%    A (partial) type over a set $A$ is called \emph{dp-minimal} if
%    dp-rank of $p$ is 1.

%    In other words, $p$ is dp-minimal if there does not exist a randomness pattern
%    for $p$ of depth $2$.
\item
    A theory is called strong if every finitary type is strong.
\end{enumerate}
\end{definition}

\begin{remark}\label{rmk:pattern0}
    Note that by mutual indiscernibility in clause (c) of the definition of a
    dividing pattern it is enough to demand that the set
        $$ \set{\ph_\al(\x,\b^\al_0) \colon \al < \ka} $$
        is consistent with $p$.
\end{remark}

The reader is encouraged to have a look in \cite{Ad} for the
discussion of strong theories. A theory is strong and dependent if
and only is it is strongly dependent (as suggested by the name), and
this is mostly the case we are interested in in this article; but
there are also strong theories which are simple unstable, and even
$SOP_2$.

The following easy Lemma was proven by the authors in \cite{OnUs} in
order to establish the connection between randomness and dividing
patterns. It is also implicit in some proofs in \cite{Ad}. We
include the proof for completeness.

\begin{lemma}\label{negation of dividing}
\begin{enumerate}
\item
    Let $p(x)$ be a type over a set $A$, let $I = \langle
    b_i\rangle_{i\in O}$ be a sequence indiscernible over $A$, and let
    $\ph(x,y)$ be a formula such that $p(x)\cup \ph(x,b_i)$ is
    consistent for some (all) $i$ and $\{\ph(x,b_i)\}_{i\in O}$ is
    $k$-inconsistent for some $k\in \mathbb{N}$. Then
    \[
    p(x)\cup \{\ph(x,b_l)\}\cup \{\neg \ph(x,b_i)\}_{i\neq l}
    \]
    is consistent for all $l$.
\item
    Let $p(x)$ be a type over a set $A$, $n<\om$ and let
    $\seq{b_i^\al \colon \al<n, i<\om}$,
    $\set{\ph_\al(x,y_\al)\colon \al<n}$ be a dividing pattern for
    $p$ over $A$ of depth $n$. Then there exists a randomness pattern for $p$
    over $A$ of depth $n$; in fact, the randomness pattern is given by the same
    array and collection of formulae.
\item
    Clause \emph{(ii)} holds also when the depth $n<\om$ is replaced with any
    cardinal $\ka$.
\end{enumerate}
\end{lemma}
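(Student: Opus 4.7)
The plan is to prove (i) by a counting argument combining compactness with the $k$-inconsistency hypothesis, then derive (ii) by iterating (i) across the rows of a dividing pattern via mutual indiscernibility, and finally obtain (iii) from (ii) by compactness.

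For (i), I argue by contradiction. By indiscernibility we may take $l = 0$ inside an arbitrarily long indiscernible sequence, and by compactness it suffices to prove consistency of $p(x) \cup \{\ph(x,b_0)\} \cup \{\neg\ph(x,b_i)\}_{i \in F}$ for each finite $F \subseteq O \setminus \{0\}$; WLOG $F = \{1,\dots,n\}$. Assuming the contrary, compactness yields
\[
p(x) \cup \{\ph(x,b_0)\} \vdash \bigvee_{s=1}^{n} \ph(x,b_s),
\]
and $A$-indiscernibility of $I$ upgrades this to: for every increasing tuple $i_0 < i_1 < \dots < i_n$ in $I$,
\[
p(x) \cup \{\ph(x,b_{i_0})\} \vdash \bigvee_{s=1}^{n} \ph(x,b_{i_s}).
\]
Fix any $a \models p \cup \{\ph(x,b_0)\}$ and set $S = \{i \in I : \models \ph(a,b_i)\}$. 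Taking $i_0 = 0$ in the displayed implication forces $S$ to meet every $n$-subset of $I \setminus \{0\}$, so $|I \setminus S| \leq n-1$ and hence $|S| \geq |I| - n + 1$. Extending $I$ (still $A$-indiscernibly, via compactness) to have length at least $k + n$ gives $|S| \geq k$, contradicting the $k$-inconsistency bound $|S| < k$.

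For (ii), I iterate (i) row by row through the dividing pattern. Fix $\eta \in {}^n\om$ and, for $\al < n$, set
\[
p_\al(x) = p(x) \cup \{\ph_\b(x, b^\b_{\eta(\b)}) : \b \neq \al\} \cup \{\neg \ph_\b(x,b^\b_i) : \b < \al,\ i \neq \eta(\b)\}.
\]
Mutual indiscernibility ensures that $I^\al$ remains indiscernible over the parameter set of $p_\al$; by Remark \ref{rmk:pattern0} and clause (c) of the dividing-pattern definition, $p_\al(x) \cup \{\ph_\al(x,b^\al_{\eta(\al)})\}$ is consistent (proved inductively, using the conclusion of the previous step), while $\{\ph_\al(x,b^\al_i)\}_{i<\om}$ is still $k_\al$-inconsistent over the enlarged base. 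Applying (i) to row $\al$ over this base then yields consistency when $\{\neg\ph_\al(x,b^\al_i)\}_{i \neq \eta(\al)}$ is adjoined. Running the induction through $\al < n$ produces the required consistency of $\Gamma_\eta \cup p$ for this $\eta$, so the original array and formulae simultaneously form a randomness pattern.

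For (iii), the only additional requirement beyond a dividing pattern is the family of consistency statements $\Gamma_\eta \cup p$ for $\eta \in {}^\ka\om$. Each finite subset of $\Gamma_\eta$ involves only finitely many rows, and restricting the dividing pattern of depth $\ka$ to any finite set of rows yields a finite-depth dividing pattern to which (ii) applies, certifying the required finitary consistency. Compactness then assembles these finitary witnesses into a randomness pattern of depth $\ka$ supported on the same array and formulae. The main technical obstacle is the counting bound in (i); once it is established, parts (ii) and (iii) are essentially bookkeeping with mutual indiscernibility and compactness.
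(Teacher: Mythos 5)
Your argument for parts (ii) and (iii) follows essentially the paper's route: the paper leaves (ii) to the reader as ``a very similar proof, working with $\bigwedge_\al\ph_\al(x,b^\al_0)$,'' and your row-by-row induction --- absorbing the already-processed rows into the type $p_\al$, using mutual indiscernibility so that $I^\al$ stays indiscernible over $\dom(p_\al)$, and feeding the conclusion of step $\al$ into the consistency hypothesis of step $\al+1$ --- is a clean way of carrying that hint out; (iii) is the same compactness reduction. For (i), however, your proof is organized differently from the paper's. The paper argues directly: taking $k$ minimal such that $\{\ph(x,b_i)\}$ is $k$-inconsistent (with $p$), it realizes a maximal consistent $(k-1)$-element subconfiguration packed into the interval $(0,1)$ of a $\setQ$-indexed sequence, and observes that $k$-inconsistency forces this realization to negate every other instance, in particular all those indexed by nonzero integers; every finite configuration then embeds into this single $\setZ$-shaped one. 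You instead argue by contradiction with a pigeonhole count: a realization of $p\cup\{\ph(x,b_l)\}$ meeting every forbidden $n$-set must satisfy at least $k$ instances. Both are elementary and correct; the paper's version produces the witness explicitly, yours trades that for a counting bound. A small point in your favor: your count only uses $k$-inconsistency \emph{with} $p$, which is exactly the form supplied by clause (d) of a dividing pattern, so your (i) dovetails with (ii) without the mild mismatch present in the literal statement.

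One step of (i) needs patching. The reduction ``WLOG $F=\{1,\dots,n\}$'' silently places all of $F$ after $l$, but a finite $F\subseteq O\setminus\{l\}$ may have elements on both sides of $l$, and indiscernibility only identifies configurations with the same order type \emph{relative to the distinguished index} $l$; so the case $F_-\ne\emptyset$, where $F=F_-\cup F_+$ with $F_-<l<F_+$, is not covered as written. The fix is immediate and uses the same count: the transferred entailment says that $S=\{i:\ \models\ph(a,b_i)\}$ meets $P\cup Q$ for every $|F_-|$-subset $P$ of $\{i<l\}$ and every $|F_+|$-subset $Q$ of $\{i>l\}$, hence the complement of $S$ has size $<|F_-|$ below $l$ or size $<|F_+|$ above $l$; extending $I$ to length at least $k+n$ on \emph{both} sides of $l$ then still forces $|S|\ge k$, giving the same contradiction. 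With that adjustment the proof is complete.
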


\begin{proof}
\begin{enumerate}
\item
    Without loss of generality let us assume that $O = \setQ$ and
    $l=0$. Assume also that $k$ is \emph{minimal} such that the set
    $\Delta = \set{\ph(x,b_i)\colon i\in\setQ}$ is $k$-inconsistent.
    By the assumptions $k>1$.

    By indiscernibility it is enough to show that the set
    $$\set{\ph(x,b_0)}\cup\set{\neg\ph(x,b_i) \colon i\in\setZ,i\neq 0}$$
    is consistent. Since $\Delta$ is $(k-1)$-consistent, the set
    $$\set{\ph(x,b_0)}\cup\set{\ph(x,b_{\frac 1{i+1}}) \colon 1<i<k}$$
    is consistent, realized by some $d$. But $\Delta$ is
    $k$-inconsistent, so clearly
    $$d\models \set{\ph(x,b_0)}\cup\set{\ph(x,b_{\frac 1{i+1}}) \colon 1<i<k}
    \cup\set{\neg\ph(x,b_i) \colon i\in\setZ,i\neq 0}$$
    and we are done.

%Suppose otherwise. By indiscernibility there is some $k'$ such that
%the type
%\[
%p(x)\cup \{\ph(x,b_0)\}\cup \{\neg \ph(x,b_{m_i})\}_{i\leq k'}
%\]
%is inconsistent for all $m_1, \dots, m_{k'}$ greater than 0.

%By taking the maximum, we may assume $k=k'$. Let $\mathcal{F}$ be
%the set of all functions from $\{1,\dots, 2k\}$ to $\{0,1\}$. By
%boolean logic,

%\[
%\ph(x,b_0)\Longleftrightarrow \bigvee_{\mu\in \mathcal{F} }
%\left(\ph(x,b_0) \wedge \bigwedge_{\mu(i)=0} \neg \ph(x,b_i) \wedge
%\bigwedge_{\mu(i)=1} \ph(x,b_i)\right).
%\]
%But given any function $\mu\in \mathcal{F}$ either
%\[
%p(x) \bigcup \left\{\ph(x,b_0)\wedge \bigwedge_{\mu(i)=0} \neg
%\ph(x,b_i)\right\}
%\]
%is inconsistent (which happens whenever $|\{i\colon \mu(i)=0\}|>k'$)
%or

%\[
%\bigwedge_{\mu(i)=1} \ph(x,b_i)
%\]
%is inconsistent (if $|\{i\colon \mu(i)=1\}|>k$).

%So $p(x)\cup \{\ph(x,b_0)\}$ is inconsistent contradicting our
%hypothesis.

\item
    A very similar proof (working with $\bigwedge_\al\ph_\al(x,b^\al_0)$ instead of
    $\ph(x,b_0)$) is left to the reader.
\item
    By clause (ii) and compactness.
\end{enumerate}
\end{proof}

%We are going to call what he
%calls

%It was shown in \cite{OnUs} that a stable theory is dp-minimal if
%and only if every 1-type has weight 1. In \cite{Us} and \cite{Ad} it
%is shown that a stable theory is strongly stable if and only if
%every type has finite weight. We will try to provide generalizations
%of these results in rosy context in the next sections.
%%We will finish this section with the following result
%%which will be key for this purposes.

\subsection{Morley-Erd\" os-Rado}

We will make use of the following classical result (originally due
to Morley, although often is referred to as ``Erd\"os-Rado
argument'', since it is an easy consequence of Erd\"os-Rado theorem
and compactness):

\begin{fact}\label{Erdos Rado}
    Let $\lam$ be a cardinal. Then there exists $\mu>\lam$ such that
    for every set $A$ of cardinality $\lam$ and a sequence of tuples
    \lseq{a}{i}{\mu} there exists an \om-type $q(x_0,x_1,\cdots)$ of
    an $A$-indiscernible sequence such that for every $n<\om$ there
    exist $i_1<i_2<\ldots<i_{n}<\mu$ such that
    the restriction of $q$ to the first $n$ variables equals
    $\tp(a_{i_1}\ldots a_{i_n}/A)$.

    We will sometimes denote $\mu$ as above by $\mu(\lam)$.
\end{fact}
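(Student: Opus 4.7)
The plan is the standard Morley-style argument: introduce a theory asserting the existence of an $A$-indiscernible sequence whose finite-type behavior is realized by increasing tuples from $\lseq{a}{i}{\mu}$, and reduce its consistency, by compactness, to a statement about finite fragments, which is handled by the Erd\H{o}s--Rado partition theorem.

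More concretely, first I would set $\kappa = 2^{\lam + |T|}$, which is an upper bound on the number of complete $n$-types over $A$ for any $n<\om$, and take $\mu = \beth_\om(\ka)^+$ (or any cardinal large enough so that for every $n<\om$ the Erd\H{o}s--Rado relation $\mu \to (\aleph_0)^n_\kappa$ holds; iterated application of $\beth_n(\kappa)^+ \to (\kappa^+)^{n+1}_\kappa$ yields this). Next, I would introduce fresh variables $\set{x_j \colon j<\om}$ and write down the partial type $\Sigma(\bar x)$ over $A$ consisting of the following schemes: (a) for each $n<\om$, each formula $\ph(y_1,\dots,y_n)$ over $A$, and each pair of increasing tuples $j_1<\dots<j_n<\om$ and $k_1<\dots<k_n<\om$, the assertion $\ph(x_{j_1},\dots,x_{j_n}) \leftrightarrow \ph(x_{k_1},\dots,x_{k_n})$, expressing $A$-indiscernibility; (b) for each $n<\om$ and each formula $\ph(y_1,\dots,y_n)$ over $A$, the disjunction over all increasing $n$-tuples $i_1<\dots<i_n<\mu$ of the assertion ``$\tp(x_0\dots x_{n-1}/A)$ agrees with $\tp(a_{i_1}\dots a_{i_n}/A)$ on $\ph$'', expressing that the $n$-type of the sequence is realized in $\lseq{a}{i}{\mu}$.

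The heart of the argument is verifying that each finite $\Sigma_0 \subseteq \Sigma$ is consistent. Such a $\Sigma_0$ mentions only finitely many formulas $\ph_1,\dots,\ph_m$ over $A$, each of arity at most some $n_0$, and finitely many variables $x_0,\dots,x_{N}$. Color each increasing $n_0$-tuple $(i_1,\dots,i_{n_0})$ from $\mu$ by its truth pattern on $\ph_1,\dots,\ph_m$, applied to all sub-tuples; this is a coloring with at most $2^{m\cdot \binom{n_0}{n_0}} < \ka$ colors. By the choice of $\mu$ and Erd\H{o}s--Rado, there is an infinite set $S \subseteq \mu$ monochromatic for this coloring. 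Then taking $b_j = a_{s_j}$ for an increasing enumeration $(s_j)_{j<\om}$ of $S$ simultaneously realizes the indiscernibility demands in (a) relative to $\ph_1,\dots,\ph_m$ and satisfies (b) via the witnesses $s_{j_1}<\dots<s_{j_n}$, so $\Sigma_0$ is realized in $\fC$.

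By compactness $\Sigma$ is realized by some sequence $I$, which is $A$-indiscernible by (a), and whose finite type equals the type of a suitable increasing finite subsequence of $\lseq{a}{i}{\mu}$ by (b). The $\om$-type $q$ of $I$ over $A$ is then the desired object, and $\mu(\lam) = \mu$ works. The only genuine obstacle is the cardinal bookkeeping in choosing $\mu$ large enough relative to the number of types over $A$; once Erd\H{o}s--Rado is available at each finite exponent for $\kappa$-many colors, the compactness packaging is routine.
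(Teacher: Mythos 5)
Your overall strategy (Erd\H{o}s--Rado plus compactness) is the classical one, and indeed the paper quotes this statement as a known fact without proof; but your particular compactness packaging loses exactly the strength that the statement asserts. The conclusion requires that for each $n$ the restriction of $q$ to the first $n$ variables \emph{equals} the complete type $\operatorname{tp}(a_{i_1}\ldots a_{i_n}/A)$ of a \emph{single} increasing tuple. Your scheme (b) is stated one formula at a time, with the witnessing tuple allowed to vary with the formula, so all that survives the compactness limit is that every finite fragment of $q$ restricted to the first $n$ variables is realized by some increasing tuple; that is, this restriction lies in the topological \emph{closure} of the set $P_n$ of complete $n$-types over $A$ realized by increasing tuples, not necessarily in $P_n$ itself. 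These can genuinely differ: if the language has unary predicates $U_j$ ($j<\omega$) and $U_j(a_i)$ holds iff $j<f(i)$ for some $f$ with finite values but unbounded supremum, then the $1$-type $\{U_j(x)\colon j<\omega\}$ agrees with some $a_i$ on each single formula yet is realized by no $a_i$, and nothing in your $\Sigma$ excludes a realization of $\Sigma$ whose first coordinate has this type. The exact-equality form is what the paper actually uses later (e.g.\ in Lemma \ref{lem:ind exist} and Lemma \ref{toca0}, where ``every $n$-type of the extracted sequence appears in the original one'' is invoked for sequences that are \emph{not} indiscernible over the relevant base, so the weak and strong readings do not coincide there).

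The repair is to run the extraction on the index set \emph{before} compactness: colour the increasing $n$-tuples from $\mu$ by their complete types over $A$ --- a colouring into at most $\kappa=2^{\lambda+|T|}$ classes --- and use Erd\H{o}s--Rado to produce a nested chain of sufficiently large subsets $S_1\supseteq S_2\supseteq\cdots$ of $\mu$ with $S_n$ homogeneous for the arity-$n$ colouring. The common type $q_n$ of increasing $n$-tuples from $S_n$ is then literally of the form $\operatorname{tp}(a_{i_1}\ldots a_{i_n}/A)$, the $q_n$ cohere because the $S_n$ are nested, and $q=\bigcup_n q_n$ is by construction the type of an $A$-indiscernible sequence; compactness is needed only to realize $q$. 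This also makes the cardinal bookkeeping less routine than you suggest: to keep every $S_n$ infinite while colouring with $\kappa$ colours at every finite exponent along a nested chain, the standard choice is $\mu=\beth_{(2^{\lambda+|T|})^+}$ (or one argues via end-homogeneity); a separate application of $\beth_{n}(\kappa)^+\to(\kappa^+)^{n+1}_\kappa$ for each arity inside a fixed set of size $\beth_\omega(\kappa)^+$ does not by itself yield the nested chain. Your finite-colouring-per-fragment argument avoids that difficulty only because it is proving the weaker, insufficient statement.
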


\begin{remark}\label{rmk:indisc}
    Let $A$ be a set, $A \subseteq B$, $I$ an $A$-indiscernible
    sequence. Then there exists $I'$, $I' \equiv_A I$, $I'$
    indiscernible over $B$.
    %Moreover, every $n$-type of $I'$ over
    %$B$ ``appears'' in $I$, that is, some $n$-tuple in $I$
\end{remark}
\begin{proof}
    First extend $I$ to be long enough so that Fact \ref{Erdos Rado}
    can be applied to it with $\lam = |B|+|T|$.
    Then there exists $I'$ indiscernible over
    $B$ such that every $n$-type of $I'$ over $B$ ``appears'' in $I$. In
    particular $I'$ has the same type over $A$ as $I$ (since $I$ was
    $A$-indiscernible and $A \subseteq B$).
\end{proof}

\section{\th-orthogonality and \th-regularity in rosy
structures.}\label{th-orthognonality}

The first part of this section is devoted to develop the analogue
notions of domination, orthogonality, weight and regularity in the
\th-forking context and the properties such notions have under
different hypothesis. In the mean time we will show the relation
with strong dependence.

Throughout the section we will assume that $T$ is rosy.

\begin{definition}

We define the following.

\begin{itemize}

\item Two types $p(x)$ and $q(x)$ are \emph{weakly \th-orthogonal}
if they are defined over a common domain $B$ and for every tuple
$a\models p'$ and $b\models q'$ we have $a\thind_B b$. This is
denoted by $p \perp^\tho_w q$.

\item Two types $p$ and $q$ are \emph{\th-orthogonal} if every
non-\th-forking extensions $p'$ and $q'$ of $p$ and $q$ respectively
to a common domain are weakly \th-orthogonal. This is denoted by $p
\perp^\tho q$.

\item Let $A$ be a set, $a,b$ tuples. We say that $a$
\emph{\th-dominates} $b$ \emph{over $A$} if for every $c$ we have
$b\nthind_{A} c$ implies $a\nthind_{A} c$. In this case we write $b
\lhd^\tho_A a$.

\item We say that $a,b$ are \emph{th-domination equivalent over $A$}
if they dominate each other over $A$. Clearly, this is an
equivalence relation. In this case we write $a \bowtie^\tho_A b$.

\item Let $p(x)$ and $q(x)$ be types over $A$ and $B$
respectively. We will say that $p(x)$ \emph{\th-dominates} $q(x)$ if
there are $a,b$ realizations of $p$ and $q$ respectively such that
$a\thind_A B$, $b\thind_B A$ and $b \lhd^\tho_{A\cup B} a$. %for
%every $c$ we have $b\nthind_{A\cup B} c$ implies $a\nthind_{A\cup B}
%c$.
If $A=B$, we say that $p$ \th-dominates $q$ \emph{over $A$}.
%We denote the transitive closure of the relation above by
%$\lhd^\tho$. If $q \lhd^\tho p$, we say that $p(x)$
%\emph{\th-dominates} $q(x)$.

\item We say that types $p$ and $q$ are \emph{\th-equidominant} if
there are non-forking extensions $p',q'$ of $p,q$ respectively to
a common domain $C$ and realizations $a'\models p', b'\models q'$
which are domination equivalent over $C$.
% \th-dominate each other.
In this case we write $p \Join^\tho q$.
\end{itemize}

\begin{remark}
    Note that equidominance is not (in general) an equivalence
    relation on types. Note also that if two types dominate each
    other, they are not necessarily equidominant (even if the
    domination is over the same set of parameters $A$), not even in
    stable theories. The problem is that whereas dominance on elements (over a set $A$)
    is transitive, dominance on types is generally not.
    %In stable
%    theories, dominance is transitive on strong types (by stationarity), and in simple
%    theories one can use the Independence Theorem in order to deduce
%    transitivity on Lascar types. It makes sense, therefore, to
%    define domination equivalence on strong types in a stable theory
%    and on Lascar types in a simple one. Since we are working in a
%    more general context, we restrain from defining domination equivalence on
%    types.
    See section 5.2 of \cite{wagnerbook} for further
    discussion of this matter and examples.
\end{remark}

\end{definition}
Now we define \th-pre-weight and \th-weight of a type $p$. We will
denote them by $\thpw(p)$ and $\thw(p)$. Note that Fact
\ref{forking NOP} implies that in stable theories \th-weight
coincides with the usual notion of weight.

\begin{definition}
$\left.\right.$
\begin{itemize}
\item Let $p(x)$ be any type over some set $A$. We will say that
$a, \langle b_i\rangle_{i=1}^n$ witnesses $\thpw(p(x))\geq n$
(\emph{\th-pre-weight of $p$} is at least $n$) if $a\models p(x)$,
$\seq{b_i}_{i=1}^n$ is $A$-\th-independent and $a\nthind_A b_i$ for
all $i,j$. If $n$ is maximal such that such a witness exists, we
will say that $a, \langle b_i\rangle_{i=1}^n$ witnesses
$\thpw(p(x))=n$ and that $p$ \emph{has \th-pre-weight $n$}.

\item
    We say that a type $p$ has \emph{finite}
    \th-pre-weight if $\thpw(p)<\om$.
    We say that a type $p$ has \emph{rudimentary finite}
    \th-pre-weight if one can not find an infinite witness
    $\set{b_i \colon i<\om}$ as in (i) above.

\item Let $p(x)$ be any type over some set $A$. We will say that
$a,B, \langle b_i\rangle_{i=1}^n$ witnesses $\thw(p(x))\geq n$
(\emph{\th-weight of $p$} is at least $n$) if $a\models p(x)$,
$a\thind_A B$, $\seq{b_i}_{i=1}^n$ is $B$-\th-independent and
$a\nthind b_i$ for all $i,j$. If $n$ is maximal such that such a
witness exists, we will say that $a,B, \langle b_i\rangle_{i=1^n}$
witnesses $\thw(p(x))=n$ and that $p$ has \emph{\th-weight $n$}.

\item
    We say that a type $p$ has \emph{finite}
    \th-weight if $\thw(p)<\om$.
    We say that a type $p$ has \emph{rudimentary finite}
    \th-weight if every non-\th-forking extension of $p$ has rudimentary
    finite pre-weight.

\end{itemize}
\end{definition}

It follows from the definition that $\thw(p) \ge n$ if and only if
there exists a non-\th-forking extension of $p$ with
\th-pre-weight at least $n$.

Notice also that one could define infinite \th-pre-weight and
weight as usual, but we will be concerned only with finite
\th-weights in this paper.

\subsection{Finite \th-weight and strong dependence}

Let us first make the obvious connections between \th-weight and
the notion of $cc$-\th-forking studied in \cite{OnUs}.

We recall the definitions. Note that the variable $x$ is a
\emph{singleton}.

\begin{definition}$\left.\right.$\label{cc}
\begin{enumerate}
\item
        We say that a tuple $\langle \ph_i(x,\a^i)\rangle_{i<n}$ and a set $A$ witness
        \emph{$n$-crisscrossed strong-dividing} ($n$-$cc$-strong-dividing) if
    $\models \exists x \bigwedge_i\ph_i(x,\a^i)$,
        $\ph_i(x,\a^i)$ strong divides over $A$ and $\a^i \thind_A \langle \a^j\rangle_{j\neq i}$ for all $i$.

\item
        We say that a tuple $\langle \ph_i(x,\a^i)\rangle_{i< n}$ and a set $A$ witness
        \emph{$n$-crisscrossed \th-dividing} ($n$-$cc$-\th-dividing) if
    $\models \exists x \bigwedge_i\ph_i(x,\a^i)$,
        $\ph_i(x,\a^i)$ \th-divides over $A$ and $\a^i \thind_A \langle \a^j\rangle_{j\neq i}$ for all $i$.

\item
        We say that a tuple $\langle \ph_i(x,\a^i)\rangle_{i< n}$ and a set $A$ witness
        \emph{$n$-crisscrossed \th-forking} ($n$-$cc$-\th-forking) if
    $\models \exists x \bigwedge_i\ph_i(x,\a^i)$,
        $\ph_i(x,\a^i)$ \th-forks over $A$ and $\a^i \thind_A \langle \a^j\rangle_{j\neq i}$ for all $i$.

\item
        We say that $T$ admits $n$-$cc$-\th-forking (or \th-dividing or strong dividing) if there
        exists a tuple $\langle \ph_i(x,\a^i)\rangle_{i< n}$ witnessing
        $n$-$cc$-\th-forking (or \th-dividing or strong forking) over $A$.

 \item
       Let $p$ be a 1-type over a set $A$.
       We say that a tuple $\langle \ph_i(x,\a^i)\rangle_{i< n}$ witnesses
        \emph{$n-cc$ \th-forking}
    (or \th-dividing or strong dividing) \emph{in $p$} if
    $\langle \ph_i(x,\a^i)\rangle_{i< n}$, $A$ witness
        $n$-$cc$-\th-forking (or \th-dividing or strong dividing) and
    the formula $\bigwedge_i\ph_i(x,\a^i)$
    is consistent with $p$.
 \item
        We say that a type $p \in S_1(A)$ admits $n$-$cc$-\th-forking (or \th-dividing or strong dividing) if there
        exists a tuple $\langle \ph_i(x,\a^i)\rangle_{i< n}$ witnessing
        $n$-$cc$-\th-forking (or \th-dividing or strong dividing) in $p$.
\end{enumerate}
\end{definition}

\begin{remark}\label{ccweight}
The following follow straight from the definitions.
\begin{enumerate}
\item
  $T$ admits $n$-$cc$-\th-forking if and only if there exists a set $A$
  and a type $p \in S(A)$ which admits $n$-$cc$-\th-forking.
\item
  Let $T$ be rosy
  Then a type $p \in S_1(A)$ does not admit $n$-$cc$-\th-forking
  if and only if it has pre-\th-weight less than $n$.
\item
  So a rosy
  $T$ does not admit $n$-$cc$-\th-forking if and only if every 1-type has
  pre-\th-weight less than $n$ if and only if every 1-type has \th-weight less than $n$.
\end{enumerate}
\end{remark}

The following is shown in \cite{OnUs}. For completeness we include
the proof in the appendix, see \ref{equivalent}.

\begin{fact}\label{equivalent2}
The following are equivalent for any $p\in S_1(A)$.
\begin{enumerate}
\item $p$ admits $n$-$cc$-\th-forking.

\item $p$ admits $n$-$cc$-\th-dividing.

\item There is an extension $p(x,B)$ of $p(x)$ such that $p(x,B)$
admits $n$-$cc$-strong dividing.

\end{enumerate}
\end{fact}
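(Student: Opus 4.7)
The plan is to establish the cycle $(1)\Rightarrow(2)\Rightarrow(3)\Rightarrow(1)$. Two of the three implications are ``upgrade'' steps, each handled by one clause of Observation \ref{thorn dividing}; the third is a base-change argument using the extension property and transitivity of $\thind$.

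For $(1)\Rightarrow(2)$: take a witness $\langle\ph_i(x,\a^i)\rangle_{i<n}$ of $n$-$cc$-\th-forking in $p$ and fix $b$ realizing $\bigwedge_i\ph_i(x,\a^i)\cup p$. For each $i$, the type $\tp(b/A\a^i)$ \th-forks over $A$, so by Observation \ref{thorn dividing}(i) there is $\a^{i\prime}$ and a formula $\psi_i(x,\a^i,\a^{i\prime})\in\tp(b/A\a^i\a^{i\prime})$ which \th-divides over $A$, with $b\thind_{A\a^i}\a^{i\prime}$. Carry out this construction inductively in $i$; at step $i$, use the extension property to further arrange $\a^{i\prime}\thind_{A\a^i}\langle\a^j\a^{j\prime}\rangle_{j\neq i}$. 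Standard symmetry/transitivity manipulations then show that the enlarged tuples satisfy $\a^i\a^{i\prime}\thind_A\langle\a^j\a^{j\prime}\rangle_{j\neq i}$ for every $i$, and $\langle\psi_i(x,\a^i,\a^{i\prime})\rangle_{i<n}$ is the required $cc$-\th-dividing witness.

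For $(2)\Rightarrow(3)$: apply Observation \ref{thorn dividing}(ii) in place of (i). For each $i$ one obtains $e_i$ with $b\thind_{A\a^i}e_i$ and a formula $\ph_i'(x,\a^i)$ implying $\ph_i(x,\a^i)$ that strongly divides over $Ae_i$. Choose the $e_i$ inductively so that $\bar e=\langle e_i\rangle_{i<n}$ is \th-independent from $\bar\a=\langle\a^i\rangle_{i<n}$ over $A$. Set $B=Ae_0\cdots e_{n-1}$ and $p(x,B)=\tp(b/B)$. Since $\bar\a\thind_A\bar e$, each $\a^i\notin\acl(B)$, so by Remark \ref{strong dividing} the formula $\ph_i'(x,\a^i)$ still strongly divides over $B$; and $\bar\a\thind_A\bar e$ together with $\a^i\thind_A\a^{\neq i}$ gives $\a^i\thind_B\a^{\neq i}$ via base monotonicity, symmetry and left transitivity of $\thind$. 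Hence $\langle\ph_i'(x,\a^i)\rangle_{i<n}$ witnesses $n$-$cc$-strong dividing in $p(x,B)$.

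For $(3)\Rightarrow(1)$: suppose $p(x,B)\supseteq p$ and $\langle\ph_i(x,\a^i)\rangle_{i<n}$ witnesses $n$-$cc$-strong dividing over $B$. By extension, choose $\bar\a'\equiv_B\bar\a$ with $\bar\a'\thind_A B$. Strong dividing and consistency are preserved under $\equiv_B$, so each $\ph_i(x,\a^{i\prime})$ strongly divides over $B$ (hence \th-divides and \th-forks over $A$) and $\bigwedge_i\ph_i(x,\a^{i\prime})\cup p$ remains consistent. From $\bar\a'\thind_A B$ we get $\a^{i\prime}\thind_A B$; combined with $\a^{i\prime}\thind_B\a^{\neq i\prime}$ (inherited from $\bar\a'\equiv_B\bar\a$, and noting $A\subseteq B$), left transitivity of $\thind$ yields $\a^{i\prime}\thind_A B\a^{\neq i\prime}$, and monotonicity gives $\a^{i\prime}\thind_A\a^{\neq i\prime}$, which is the required $cc$-structure over $A$.

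The main obstacle lies in the upgrade steps $(1)\Rightarrow(2)$ and $(2)\Rightarrow(3)$: each application of Observation \ref{thorn dividing} introduces a new parameter attached to $\a^i$, and one must arrange this in a way compatible with the mutual \th-independence required for the $cc$-condition. This is handled by inductive use of the extension property, placing each new parameter generically over all the data chosen so far, after which preservation of consistency and of (strong-)dividing follows by invariance of these notions under automorphisms of $\fC$ fixing the relevant base.
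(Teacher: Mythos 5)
Your overall architecture --- the cycle $(1)\Rightarrow(2)\Rightarrow(3)\Rightarrow(1)$ with the two upgrade steps routed through Observation \ref{thorn dividing} --- is the same as the paper's, and your $(1)\Rightarrow(2)$ is essentially the paper's argument (the paper unwinds the definition of \th-forking directly and moves the dividing parameters by extension so that the two blocks of parameters become \th-independent over $A$; your inductive bookkeeping accomplishes the same thing). However, there are two genuine gaps in the remaining steps.

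In $(2)\Rightarrow(3)$ you require $\bar e \thind_A \bar\a$, and this cannot be arranged in general. To preserve strong dividing of $\ph_i(x,\a^i)$ over $Ae_i$, the tuple $e_i$ must realize the fixed type $\tp(e_i'/A\a^i)$ of the original witness (strong dividing depends on the orbit of $\a^i$ over $Ae_i$, hence on $\tp(e_i/A\a^i)$), and that type will typically \th-fork over $A$: a set over which the conjugates of $\a^i$ become $k$-inconsistent is usually heavily entangled with $\a^i$. This is precisely the ``no control over $e$'' phenomenon the paper discusses at length after Proposition \ref{2}. The only independence you are free to impose is over the base $A\a^i$, namely $e_i \thind_{A\a^i} (\text{everything chosen so far})$; what this yields is mutual \th-independence of the \emph{blocks}, $\a^i e_i \thind_A \a^{\neq i}e_{\neq i}$, which is exactly what the paper arranges (its condition $\a D \thind_A \b E$ for $n=2$). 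From the block independence one still deduces both needed facts: $\a^i \thind_{Ae_i} e_{\neq i}$ gives $\a^i \notin \acl(A\bar e)$, and $\a^i \thind_{Ae_i}\a^{\neq i}e_{\neq i}$ gives $\a^i \thind_{A\bar e}\a^{\neq i}$. So your conclusion is reachable, but not from the independence you assert, which is false in general.

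In $(3)\Rightarrow(1)$, the step ``by extension, choose $\bar\a'\equiv_B\bar\a$ with $\bar\a'\thind_A B$'' is not a legitimate use of extension: extension extends a type that already does not \th-fork over the given base to a larger domain, whereas here $\tp(\bar\a/B)$ may \th-fork over $A$, in which case \emph{no} conjugate of $\bar\a$ over $B$ is \th-independent from $B$ over $A$. The delicate point in this direction is exactly the base change of the condition $\a^i\thind_B\a^{\neq i}$ down to $\a^i\thind_A\a^{\neq i}$; the paper sidesteps it by noting that the witness produced in $(2)\Rightarrow(3)$ keeps the original parameters $\a^i$ together with their $A$-independence, so that strong dividing over $B\supseteq A$ can be read off directly as \th-dividing, hence \th-forking, over $A$.
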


The main goal of this subsection is to characterize, in rosy
theories, strong dependence in terms of the \th-pre-weight. In order
to do this, we will need to prove existence of mutually \th-Morley
sequences. The procedures will also bring some light as to what is
needed in order to characterize strong dependence within dependent
theories (or Adler's ``strongness'' within arbitrary theories) in
terms of weight with respect to some independence notion.

\begin{observation}\label{extending morley}
Let $\set{I^i\colon i<n}$ be sequences such that $I^i$ is a
\th-Morley sequence over $AI^{<i}$ based on $A$. Then $I^i$ is a
non-\th-forking sequence over $AI^{\neq i}$ based on $A$.

%\textbf{Alf, please check - originally we assumed Morley over
%$AI^{<i}a^{>i}$ based on $A$, but I don't see where it's needed in
%the proof}
\end{observation}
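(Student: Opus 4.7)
The goal is to show that, for each $j$, $a^i_j \thind_A A I^{\neq i} a^i_{<j}$. The \th-Morley hypothesis on $I^i$ already delivers \th-independence from $A I^{<i} a^i_{<j}$, so the real content of the observation is that the ``later'' sequences $I^{>i} := I^{i+1} \cup \cdots \cup I^{n-1}$ can be absorbed into the base of non-\th-forking. The plan is to treat $I^{>i}$ as a single block, show it is \th-independent from $A I^{\leq i}$ over $A$, flip sides by symmetry, and then splice the result into the Morley condition on $I^i$.

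For the main step I would show $I^{>i} \thind_A A I^{\leq i}$ as follows. For each $k > i$, the Morley condition $a^k_m \thind_A A I^{<k} a^k_{<m}$ for all $m$, together with iterated transitivity applied to finite initial segments and finite character of non-\th-forking, yields $I^k \thind_A A I^{<k}$. Then I would assemble the $I^k$'s for $k > i$ using the composition principle: \emph{if $X \thind_A B$ and $Y \thind_A B X$, then $X Y \thind_A B$}. (From $Y \thind_A B X$, base monotonicity gives $Y \thind_{A X} B$; combining this with $X \thind_A B$ via symmetry and right-transitivity produces the conclusion.) Iteratively applying this principle --- starting with $X = I^{i+1}$ and $Y = I^{i+2}$ (using $I^{i+2} \thind_A A I^{\leq i+1}$), and so on --- delivers $I^{>i} \thind_A A I^{\leq i}$ after $n - i - 1$ steps.

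Symmetry then gives $A I^{\leq i} \thind_A I^{>i}$, and monotonicity followed by base monotonicity yields $a^i_j \thind_{A I^{<i} a^i_{<j}} I^{>i}$. Combining this with $a^i_j \thind_A A I^{<i} a^i_{<j}$ via right-transitivity produces the desired $a^i_j \thind_A A I^{\neq i} a^i_{<j}$. There is no genuine obstacle here: the whole argument is a bookkeeping exercise with the standard properties of \th-independence in a rosy theory (symmetry, monotonicity, base monotonicity, right-transitivity, finite character), all of which are available from \cite{onshuus}; one just needs to check that each invocation of these properties is legitimate over the relevant parameter set.
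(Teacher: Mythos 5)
Your proposal is correct and follows essentially the same route as the paper: the key intermediate step in both is to extract $I^{>i}\thind_A I^{\le i}$ from the Morley hypotheses via transitivity and finite character of \th-forking, and then to splice this (using symmetry, monotonicity and transitivity) into the condition $\a^i_j \thind_A \a^i_{<j}I^{<i}$ to obtain $\a^i_j \thind_A \a^i_{<j}I^{\neq i}$. The only difference is expository: you spell out the composition principle and the base-monotonicity steps that the paper leaves implicit.
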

\begin{proof}
We need to prove that $\a^i_j \thind_A\a^i_{<j}I^{\neq i}$ where we
define $I^i = \lseq{\a^i}{j}{\mu_i}$.

By the assumptions, $\a^i_j \thind_A\a^i_{<j}I^{<i}$ for all $i,j$.
Hence by transitivity and finite character of \th-forking, we have
$I^{>i}\thind_A I^{\le i}$ for all $i$, in particular
$I^{>i}\thind_A \a^i_{\le j}I^{<i}$ for all $i,j$. By transitivity
again, combining $\a^i_j \thind_A\a^i_{<j}I^{<i}$ and
$I^{>i}\thind_A \a^i_{\le j}I^{<i}$, we have $\a^i_{\le j} \thind_A
I^{\neq i}$.

Therefore, since $\a^i_j \thind_A \a^i_{<j}$, we get $\a^i_j
\thind_A\a^i_{<j}I^{\neq i}$, as required.
\end{proof}

\begin{lemma}\label{toca0}
Let $\set{\a^i\colon i<n}$ be a set of tuples and let
$\set{I^i\colon i<n}$ be sequences such that
\begin{itemize}
\item
    For each $i<n$ the sequence $I^i$ is $AI^{<
    i}a^{>i}$-indiscernible.
\item
    $I^i$ starts with $\a^i$.
\end{itemize}
Then there exist sequences $\set{J^i\colon i<n}$ such that
\begin{itemize}
\item
    For each $i<n$ the sequence $J^i$ is $AJ^{\neq
    i}$-indiscernible.
\item
    $I^i \equiv_{Aa^i} J^i$. So in particular, $J^i$ starts with $\a^i$.
\end{itemize}
Moreover, if $I^i$ are \th-Morley sequences over $AI^{<i}a^{>i}$
based on $A$, then we can make $J^i$ Morley over $AI^{\neq i}$ based
on $A$.
\end{lemma}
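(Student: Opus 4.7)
The plan is to construct the $J^i$ top-down, in the order $i = n-1, n-2, \ldots, 0$, maintaining the invariant that after the step for index $i$, the sequences $J^i, J^{i+1}, \ldots, J^{n-1}$ are mutually indiscernible over $A I^{<i}$, each $J^j$ begins with $\alpha^j$, and $J^j \equiv_{Aa^j} I^j$. At the initialization $i = n-1$, take $J^{n-1} := I^{n-1}$, which is indiscernible over $AI^{<n-1}$ by hypothesis.

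At the inductive step from $J^{>i}$ to $J^i$, the plan is to apply Remark~\ref{rmk:indisc} to $I^i$, which is indiscernible over the base $A I^{<i} a^{>i}$. Since each previously constructed $J^j$ starts with $\alpha^j$, the enlarged base $A I^{<i} J^{>i}$ contains $A I^{<i} a^{>i}$, so the Remark yields $J^i$ indiscernible over $A I^{<i} J^{>i}$ with $J^i \equiv_{A I^{<i} a^{>i}} I^i$. A standard prepending trick (extract the tail of $I^i$, indiscernible over $A I^{<i} a^{\geq i}$, then prepend $\alpha^i$) ensures $J^i$ starts with $\alpha^i$, so in particular $J^i \equiv_{Aa^i} I^i$.

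The hard part is the invariant transfer: after defining $J^i$, the previously-built $J^j$ (for $j > i$) was indiscernible over $A I^{\leq i} J^{\neq j, k > i}$, and I need it indiscernible over $A I^{<i} J^i J^{\neq j, k > i}$, i.e.\ with $I^i$ replaced by $J^i$ in the parameter set. Here the full strength of Remark~\ref{rmk:indisc}, as derived from Fact~\ref{Erdos Rado}, is crucial: it delivers $J^i$ not merely with $J^i \equiv_{AI^{<i}a^{>i}} I^i$, but with the stronger property that every $n$-type of $J^i$ over the enlarged base $A I^{<i} J^{>i}$ already appears as the type over the same base of some $n$-subtuple of $I^i$. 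Consequently any finite $\bar J^i \subseteq J^i$ can be matched by some $\bar I^i \subseteq I^i$ with $\bar I^i \equiv_{A I^{<i} J^{>i}} \bar J^i$; combining this type equivalence with the old indiscernibility of $J^j$ over $AI^{<i} I^i J^{\neq j, k > i}$ via a standard substitution argument then yields indiscernibility over $AI^{<i} J^i J^{\neq j, k > i}$, closing the induction.

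At the final step $i = 0$, the invariant reads that $J^0, \ldots, J^{n-1}$ are mutually indiscernible over $A$, which together with the starting-value and type-equivalence conditions is the main conclusion. For the \emph{Moreover} clause on \th-Morley sequences, I would enlarge the target base in the extraction at each step to also contain $I^{>i}$ (so the full base becomes $AI^{<i}J^{>i}I^{>i}$, still a superset of $AI^{<i}a^{>i}$): this additionally makes $J^i$ indiscernible over $AI^{\neq i}$, and via the reasoning of Observation~\ref{extending morley} the first element of $I^i$ is non-\th-forking over $AI^{\neq i}$ based on $A$, which transfers to $J^i$ by the type equivalence $J^i \equiv_{Aa^i} I^i$ and, together with Fact~\ref{fct:Morley}, yields the \th-Morley property of $J^i$ over $AI^{\neq i}$ based on $A$.
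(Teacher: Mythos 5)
Your construction is essentially the paper's own proof: the same downward induction, the same Erd\H{o}s--Rado extraction of $J^i$ from $I^i$ over the enlarged base, the same substitution argument (matching finite tuples of $J^i$ with tuples of $I^i$ over that base) to keep the already-built sequences indiscernible after $I^i$ is replaced by $J^i$, and an equivalent device for making $J^i$ start with $\a^i$. The one point you gloss over is that Remark \ref{rmk:indisc} prolongs $I^i$ before extracting, and the previously built $J^{>i}$ are only known to be indiscernible over the \emph{original} $I^i$, so a matched tuple landing in the prolongation would break your invariant transfer; the paper avoids this by fixing the lengths $\mu_i = \mu(\sum\mu_{<i}+|A|+|T|)$ at the outset, before any step of the induction, and you should do the same. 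Finally, in the ``moreover'' clause the equivalence $J^i \equiv_{A\a^i} I^i$ alone is too weak to transfer non-\th-forking over $AI^{\neq i}$; what does the job is the finer fact that every finite tuple of $J^i$ realizes over $AI^{\neq i}$ the type of a tuple of $I^i$, combined with finite character of \th-forking and Observation \ref{extending morley} (Fact \ref{fct:Morley} is not needed here).
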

\begin{proof}
Exactly the same construction is used to prove both parts of the
Lemma. To avoid being repetitive, we will prove the ``moreover''
part. The proof of the first part is the same, except that without
the extra assumptions we cannot get the stronger conclusion. So
assume the $I^i$'s is a \th-Morley sequence over $AI^{<i}a^{>i}$
based on $A$.

We need to make sure that $I^i$ can be made indiscernible over
$AI^{\neq i}$ and not only over $AI^{<i}a^{>i}$. So assume that
$\len(I^i) = \mu_{i} = \mu(\sum\mu_{<i}+|A|+|T|)$ as in Fact
\ref{Erdos Rado}. We will make our way ``backwards'', that is, by
downward induction on $i$, starting with $i = n$.

Assume that for $\ell>i$ we have $I^\ell$ are \th-Morley
$\om$-sequences over $AI^{\neq \ell}$ based on $A$, whereas for
$\ell\le i$ we still have $I^\ell$ of length $\mu_\ell$ which are
\th-Morley sequences over $AI^{<\ell}\a^{>\ell}$ based on $A$,
non-\th-forking over $AI^{\neq \ell}$ (we have the last assumption
by Observation \ref{extending morley}).

By Fact \ref{Erdos Rado} we can find $J^i$ which is an
indiscernible \om-sequence over $AI^{\neq i}$ such that every
$n$-type of $J^i$ over $AI^{\neq i}$ ``appears'' in $I^i$. So in
particular $J^i$ has the same type over $AI^{<i}\a^{>i}$ as $I^i$.
Moreover, since \th-forking has finite character, $J^i$ is
non-\th-forking over $AI^{\neq i}$.

Notice that given a finite tuple $\b$ in $J^i$ the question of
whether for some $\bar \al = \al_1<\ldots <\al_k<\om$ and $\bar
\beta = \beta_1<\ldots<\beta_k < \om$ we have $\a^\ell_{\bar \al}
\equiv_{\b I^{\neq \ell,i}} \a^\ell_{\bar \beta}$ amounts to the
same question over \emph{some} $\b'$ in $I^i$. Since these were
indiscernible, we get that for any $\ell>i$ the sequence $I^\ell$
is still indiscernible over $AI^{\neq \ell,i}J^i$. Using a similar
argument one can also make sure that for $\ell\neq i$ we still
have that $I^\ell$ is a non-\th-forking sequence over $AI^{\neq
\ell,i}J^i$.

\medskip

So the $J^i$ satisfies all the requirements, except for the fact
that we need the first element of it to be $\a^i$. Note, though,
that the first element of $J^i$ has the same type over
$I^{<i}\a^{>i}$ as $\a^i$. So applying an automorphism over
$I^{<i}\a^{>i}$, we obtain a new $J^i$ that starts with $\a^i$ and
new $I^\ell$ for $\ell>i$ which satisfy all the required
properties, completing the proof of the inductive step.

%\textbf{I
%think we complete the proof of the inductive step. If
%not, feel free to un-"%"
%the rest.}
%Continuing like this we replace all the $I^i$'s with
%sequences which are mutually $A$-\th-Morley (in particular
%mutually $A$-indiscernible), and such that $\a^i$ is the first
%element of $I^i$, as required.

\end{proof}

\begin{lemma}\label{toca} Let $\set{\a^i\colon i<n}$ be a set of tuples
which is \th-independent over a set $A$. Then there exist
sequences $\set{I^i\colon i<n}$ such that
\begin{itemize}
\item
    For each $i<n$ the sequence $I^i$ is a \th-Morley sequence
    over $AI^{\neq i}$ based on $A$. So $I^i$ is $AI^{\neq
    i}$-indiscernible and $I^i \thind_A I^{\neq i}$.
\item
    $I^i$ starts with $\a^i$.
\end{itemize}

\end{lemma}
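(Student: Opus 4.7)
The plan is to reduce the statement to Lemma \ref{toca0} by first constructing, by induction on $i<n$, sequences $I^i$ satisfying the hypotheses of the ``moreover'' part of that lemma: $I^i$ is a \th-Morley sequence over $AI^{<i}\a^{>i}$ based on $A$, starting with $\a^i$ (and, to permit the later Erd\"os-Rado argument in Lemma \ref{toca0}, of sufficient length). Once these sequences exist, invoking Lemma \ref{toca0} directly produces sequences with the indiscernibility conclusion demanded here, and the required \th-independence $I^i \thind_A I^{\neq i}$ follows from the standard fact that a \th-Morley sequence over $B$ based on $A$ is itself \th-independent from $B$ over $A$.

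The base case $i=0$ is immediate from Fact \ref{fct:Morley}, since $\a^0 \thind_A \a^{>0}$ by the hypothesis that $\{\a^i : i<n\}$ is \th-independent over $A$. For the inductive step at stage $i$, the task is to verify $\a^i \thind_A I^{<i}\a^{>i}$ so that Fact \ref{fct:Morley} can be applied to produce $I^i$.

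To establish this, I would first note that for each $j<i$, the inductive construction gives $I^j \thind_A I^{<j}\a^{>j}$, and hence $I^j \thind_A \a^i\a^{>i}$ since $\a^i\a^{>i} \subseteq \a^{>j}$. A secondary induction on $j$, combined with right transitivity and base monotonicity of \th-forking, then yields $I^{<i} \thind_A \a^i\a^{>i}$. By symmetry and the chain rule this rewrites as $\a^i \thind_{A\a^{>i}} I^{<i}$; combining this with $\a^i \thind_A \a^{>i}$ (a consequence of the \th-independence of $\{\a^i : i<n\}$ over $A$) via transitivity produces the desired $\a^i \thind_A I^{<i}\a^{>i}$.

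The main obstacle is the bookkeeping in this independence computation, which requires several careful applications of symmetry, transitivity, and base monotonicity of \th-forking in rosy theories. Once this is in place, Fact \ref{fct:Morley} supplies each $I^i$, and the moreover part of Lemma \ref{toca0} finishes the argument.
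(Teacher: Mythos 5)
Your proposal is correct and follows essentially the same route as the paper: build $I^i$ as a \th-Morley sequence over $AI^{<i}\a^{>i}$ based on $A$ by induction on $i$ (base case from Fact \ref{fct:Morley}, inductive step by showing $I^{<i}\thind_A \a^{\ge i}$ and then using symmetry and transitivity to get $\a^i\thind_A I^{<i}\a^{>i}$), and then invoke the ``moreover'' part of Lemma \ref{toca0}. The independence bookkeeping you describe is exactly the computation the paper carries out, just spelled out in slightly more detail.
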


\begin{proof}

We construct sequences $I^i$ such that $I^i$ is a \th-Morley
sequence over $AI^{<i}\a^{>i}$ based on $A$. By Lemma \ref{toca0},
this is enough in order to obtain the desired conclusion. The
construction is by induction on $i<n$.

\medskip

The case $i=0$ follows from Fact \ref{fct:Morley}.

\medskip

%The induction step will be made in three steps. We will first find
%sequences which satisfy the independence requirement of being
%\th-Morley and then use Erd\"os-Rado to achieve indiscernibility.
%Finally, we will use automorphisms to find sequences that satisfy
%the second requirement of the Lemma.

So let $i>0$, and assume that $I^{<i}$ already exist. Note that
$I^0 \thind_A \a^{>0}$, $I^1 \thind_A I^0\a^{>1}$ hence
$I^0I^1\thind_A a^{>1}$. Continuing like this we see that $I^{<i}
\thind_A \a^{\geq i}$. By symmetry and transitivity $\a^i \thind_A
I^{<i}\a^{>i}$, and we can apply Fact \ref{fct:Morley} again.

%Clearly $\set{I^i \colon i<n}$ is a \th-independent set over $A$
%and $I^i$ is indiscernible over $AI^{<i}\a^{>i}$. Let us first
%show that $I^i$ is a non-\th-forking sequence over $AI^{\neq i}$
%based on $A$. This is, we need to prove that $\a^i_j
%\thind_A\a^i_{<j}I^{\neq i}$ where we define $I^i =
%\lseq{\a^i}{j}{\mu_i}$. Recall that $\a^i_j
%\thind_A\a^i_{<j}I^{<i}$ and $I^{>i}\thind_A \a^i_{\le j}I^{<i}$
%by the construction. So we have $\a^i_{\le j} \thind_A I^{\neq i}$
%and therefore, since $\a^i_j \thind_A \a^i_{<j}$, we get $\a^i_j
%\thind_A\a^i_{<j}I^{\neq i}$.
%

\end{proof}

\bigskip

We are now able to prove that strong dependence implies boundedness
(by $\omega$) of $cc$-strong dividing patterns and of \th-weight.

\begin{proposition}\label{prp:strong div}
If a type $p$ admits a $n$-$cc$-strong-dividing witness then
$dp-rk(p)\geq n$.
\end{proposition}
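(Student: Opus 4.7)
The plan is to convert the $n$-$cc$-strong-dividing witness for $p$ into a randomness pattern of depth $n$, which by definition gives $dp\text{-}rk(p)\geq n$. The natural intermediate object is a dividing pattern of depth $n$ for $p$; once we have that, Lemma \ref{negation of dividing}(iii) immediately upgrades it to a randomness pattern.

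Suppose $\langle \ph_i(x,\a^i)\rangle_{i<n}$ and $A$ witness $n$-$cc$-strong-dividing in $p$. By definition the tuples $\{\a^i : i<n\}$ are \th-independent over $A$, so Lemma \ref{toca} applies and produces sequences $I^i = \langle \a^i_j\rangle_{j<\om}$ that are mutually $A$-indiscernible (i.e.\ $I^i$ is indiscernible over $AI^{\neq i}$), each a \th-Morley sequence over $AI^{\neq i}$ based on $A$, with $I^i$ starting at $\a^i$. The first step of the proof is just to invoke this Lemma.

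Next, I would check that the array $\langle \a^i_j : i<n, j<\om\rangle$ together with $\{\ph_i(x,\y_i)\}_{i<n}$ is a dividing pattern of depth $n$ for $p$ over $A$, in the sense of Definition \ref{dfn:strong}. Clause (a) is mutual indiscernibility, which Lemma \ref{toca} already gives. Clause (b) is trivial. For clause (c), by Remark \ref{rmk:pattern0} it suffices to verify consistency with $p$ at $\eta \equiv 0$, and this is exactly the hypothesis that $\bigwedge_i \ph_i(x,\a^i)$ is consistent with $p$. For clause (d), each $\a^i_j$ realizes $\tp(\a^i/A)$ (since $I^i$ is $A$-indiscernible starting with $\a^i$), so strong dividing of $\ph_i(x,\a^i)$ over $A$ directly gives some $k_i<\om$ such that $\{\ph_i(x,\a^i_j) : j<\om\}$ is $k_i$-inconsistent, hence certainly $k_i$-inconsistent with $p$.

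Finally, I would apply Lemma \ref{negation of dividing}(ii)--(iii) to this dividing pattern to obtain a randomness pattern of depth $n$ for $p$ over $A$ given by the same array and same formulae, and conclude $dp\text{-}rk(p)\geq n$. I do not expect any real obstacle here: the content is already packaged in Lemma \ref{toca} (to produce the mutually indiscernible Morley array from an arbitrary \th-independent set) and in Lemma \ref{negation of dividing} (to pass from $k$-inconsistency to negations on all but one index). The only mildly delicate point is the appeal to Remark \ref{rmk:pattern0}, which is what allows the original weak consistency hypothesis $\exists x\bigwedge_i\ph_i(x,\a^i)$ to be upgraded to consistency at every $\eta \in {}^n\om$.
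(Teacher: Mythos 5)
Your proposal is correct and follows essentially the same route as the paper's proof: invoke Lemma \ref{toca} to turn the \th-independent tuples $\a^i$ into mutually indiscernible \th-Morley sequences, observe that strong dividing yields the inconsistency clause of a dividing pattern of depth $n$, and then apply Lemma \ref{negation of dividing}(ii) to upgrade to a randomness pattern. The only cosmetic difference is that you verify $k_i$-inconsistency directly from the definition of strong dividing, while the paper phrases it via inconsistency of a finite conjunction along the indiscernible sequence; these are the same observation.
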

\begin{proof}
    Let
    $\langle \psi_i(x,\a^i)\rangle_{i<n}$ and a set $A$ witness
        $n$-$cc$-strong dividing, that is,
    $\bigwedge_i\psi_i(x,\a^i)$ is consistent with $p$,
        $\psi_i(x,\a^i)$ strongly divides over $A$ and $\a^i \thind_A \langle \a^j\rangle_{j\neq i}$ for all $i$.

    By the definition of strong dividing $\a^i \not\in\acl(A)$.
    Since $\set{\a^i\colon i<n}$ is \th-independent, we can build
    as in Lemma \ref{toca}  sequences $I^i = \lseq{\a^i}{j}{\om}$
    such that:

    \begin{itemize}
    \item
        $I^i$ is a \th-Morley sequence over $AI^{\neq i}$ based on $A$.
    \item
        $\a^i_0 = \a^i$.
    \end{itemize}

    For each $i<n$ and $k<\om$ denote $\psi^k_i(x) =
    \psi^k_i(x,\a^i_{<k}) =
    \bigwedge_{j<k}\psi_i(x,\a^i_j)$. Note that since
    $\psi_i(x,\a^i_0)$ strongly divides over $A$, for some $k<\om$
    the formula $\psi^k_i(x)$ is inconsistent.

    So we clearly have a \th-dividing pattern (see Definition
    \ref{dfn:strong}) for $p$ of depth $n$; applying Lemma
    \ref{negation of dividing}(ii), we are done.

%    Let $\lseq{k}{i}{n}$
%    be maximal such that
%    $\bigwedge_i\psi_i^{k_i}(x)$ is consistent with $p$ (such $k_i$
%    exist since for $k_i = 1$ we have consistency by assumption); notice that
%   $k_i$ depends on
%    the choice of $I^i$ but this will not affect the proof.

%    For simplicity, for $i<n$ we will denote
%    \begin{itemize}
%    \item
%        $\b^i = \a^i_{<k_i}$
%    \item For $j<\om$ let $\b^i_j =
%        \a^i_{[j\cdot k_i,(j+1)\cdot k_i)}$. So $\b^i_0 = \b^i$.
%    \item
%        $\ph_i(x,\b^i) = \psi_i^{k_i}(x,\a^i_{<k})$
%    \item
%        $J^i = \lseq{\b^i}{j}{\om}$, which is clearly an
%        indiscernible sequence over $AJ^{\neq i}$.
%    \end{itemize}

%    By the choice of $k_i$, for some $d\models p$ we have

%    $$ d\models \left\{\ph_i(x,\b^i_0)\colon i<n \right\} \cup
%    \left\{\neg\ph_i(x,\b^i_j)\colon i<n,j>0\right\}. $$

%    By mutual indiscernibility of $J^i$ over $A$ which is the domain of $p$, this gives us a randomness
%    pattern of depth $n$ in $p$, which completes the proof.

%%    Note that for all $i<n$ and $j<\mu_i$ we have $\a^i_j
%%    \equiv_{A{a^{\neq i}}}\a^i$. Therefore for every $i$ and $j<\mu_i$
%%    we have
%%    \begin{equation}\label{equ:stong div1}
%%        \exists x \psi_i(x,\a^i_j)\bigwedge_{\ell \neq i}
%%        \psi_\ell(x,\a^i)
%%    \end{equation}

\end{proof}

\begin{theorem}\label{finite weight}
If $T$ is strongly dependent (and rosy) then every (finitary) type
has rudimentarily finite \th-weight. If $T$ is dp-minimal then every
1-type has \th-weight 1.

Moreover, the conclusion is true if we just assume that $T$ is
strong and rosy.
\end{theorem}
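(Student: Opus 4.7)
The plan is to combine Remark~\ref{ccweight}, Fact~\ref{equivalent2}, and Proposition~\ref{prp:strong div} to convert a large \th-weight witness for $p$ into a deep randomness or dividing pattern, and then contradict the relevant hypothesis on $T$.

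Suppose toward contradiction that a finitary type $p$ fails to have rudimentarily finite \th-weight. Unwinding the definitions, there exist a non-\th-forking extension $q=\tp(a/B)\supseteq p$ and a sequence $\seq{b_i:i<\omega}$ that is $B$-\th-independent with $a\nthind_B b_i$ for every $i<\omega$. For each finite $n$, the initial segment $\seq{b_i:i<n}$ shows via (the finitary analogue of) Remark~\ref{ccweight}(ii) that $q$ admits $n$-$cc$-\th-forking. By Fact~\ref{equivalent2}, some extension $q_n\supseteq q$ admits $n$-$cc$-strong-dividing. Running the construction inside the proof of Proposition~\ref{prp:strong div} on $q_n$ first produces a dividing pattern of depth $n$ and then, via Lemma~\ref{negation of dividing}(ii), a randomness pattern of depth $n$. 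Both patterns have their consistency clauses witnessed inside $q_n\supseteq p$, so they are patterns for $p$ as well.

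Since patterns of every finite depth exist for $p$, a routine compactness argument yields a randomness pattern and a dividing pattern of depth $\omega$ for $p$. The three cases of the theorem follow: if $T$ is strongly dependent, then by the reduction to singletons noted after Definition~\ref{dfn:strongdep}, every finitary type is strongly dependent, contradicting the existence of a depth-$\omega$ randomness pattern; if $T$ is dp-minimal, the same argument applied to a 1-type $p$ with $\thw(p)\ge 2$ already gives a depth-$2$ randomness pattern, contradicting Definition~\ref{dfn:strongdep}(iv); and if $T$ is strong, the depth-$\omega$ dividing pattern contradicts Definition~\ref{dfn:strong}(ii) directly, with no appeal to dependence.

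The main technical point is the compactness step combining depth-$n$ patterns into a depth-$\omega$ pattern. This is routine: the conditions (mutual indiscernibility over $A$, consistency with $p$ of the row-zero conjunctions, and for the dividing pattern a uniform $k_\alpha$-inconsistency per row) are encodable as a partial type in variables for the array, each finite subtype of which mentions only finitely many rows and hence is realized by a sufficiently deep finite pattern. (Alternatively one could bypass compactness by generalizing Lemma~\ref{toca} to countable index sets and building mutually indiscernible \th-Morley sequences directly from the infinite witness $\seq{b_i:i<\omega}$.)
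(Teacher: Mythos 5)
Your proposal is correct and follows exactly the route the paper takes: its proof of this theorem is precisely the citation of Remark~\ref{ccweight}, Fact~\ref{equivalent2} and Proposition~\ref{prp:strong div}, with the ``moreover'' clause handled by noting that Proposition~\ref{prp:strong div} actually produces a dividing pattern and that Lemma~\ref{toca} needs only rosiness. The only difference is that you spell out the compactness step assembling the finite-depth patterns into a depth-$\omega$ one (and the reduction to singletons), details the paper leaves implicit.
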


\begin{proof}
    This now follows easily from Remark \ref{ccweight}, Fact \ref{equivalent2} and Proposition \ref{prp:strong div}.

    For the ``moreover'' part note that Lemma \ref{toca} only
    assumes
    rosiness, and in Proposition \ref{prp:strong div} we show, in
    fact, existence of a dividing pattern.
\end{proof}

The rest of the section will be devoted to show the equivalence
between ``rudimentarily finite'' \th-weight and ``finite
\th-weight''.

%Since our main interest in this section is analyzing \th-weight in
%$dp$-minimal and strongly dependent theories, some of the results
%will assume that we have rudimentary finite (equivalently, finite)
%\th-weight.

\subsection{Basic properties}

Here we list the basic properties of \th-weight. Some of the results
and the proofs in this subsection are very similar, and sometimes
completely analogous to the results in simple theories (see section
5.2 of \cite{wagnerbook}).

\begin{lemma}
The following hold.

\begin{enumerate}\label{weight}
\item If $a\thind _A b$ then $\thw(a/A)=\thw(a/Ab)$.

\item $\thw(ab/A)\leq \thw(a/A)+\thw(b/A)$. Equality holds
whenever $a\thind_A b$.
\end{enumerate}
\end{lemma}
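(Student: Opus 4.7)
For part (i), I show both inequalities using the hypothesis $a\thind_A b$. The direction $\thw(a/Ab)\leq\thw(a/A)$ is immediate: any witness $(B',\langle b_i\rangle)$ for $\thw(a/Ab)\geq n$ has $a\thind_{Ab}B'$, which combined with $a\thind_A b$ via transitivity of \th-independence gives $a\thind_A B'$ (since $b\in B'$), so the same data witnesses $\thw(a/A)\geq n$. For the converse $\thw(a/A)\leq\thw(a/Ab)$, I start with a witness $(B,\langle b_i\rangle)$ of $\thw(a/A)\geq n$ and use the extension axiom to replace it by $(B',\langle b'_i\rangle)\equiv_{Aa}(B,\langle b_i\rangle)$ satisfying the extra condition $B'\langle b'_i\rangle\thind_{Aa}b$. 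The witness conditions $a\thind_A B'$, $B'$-\th-independence of $\langle b'_i\rangle$, and $a\nthind_{B'}b'_i$ transfer through the $Aa$-automorphism, and I claim $(B'b,\langle b'_i\rangle)$ witnesses $\thw(a/Ab)\geq n$. The three required properties---$a\thind_{Ab}B'b$, $B'b$-\th-independence of $\langle b'_i\rangle$, and $a\nthind_{B'b}b'_i$---all follow by repeated applications of transitivity and symmetry from the ingredients $a\thind_A b$, $a\thind_A B'$, $B'\langle b'_i\rangle\thind_{Aa}b$, and $a\nthind_{B'}b'_i$: transitivity yields $a\thind_A B'b$, base-transitivity upgrades $B'$-\th-independence of $\langle b'_i\rangle$ to $B'b$-\th-independence via $b\thind_{B'}\langle b'_i\rangle$, and (c) reduces to a short contradiction using $a\thind_{B'}b$.

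For part (ii) I first prove the inequality. Given a witness $(B,\langle c_i\rangle_{i<k})$ for $\thw(ab/A)\geq k$, the chain rule for \th-forking gives, for each $i$ with $ab\nthind_B c_i$, that $a\nthind_B c_i$ or $b\nthind_{Ba}c_i$. Set $I=\{i:a\nthind_B c_i\}$ and let $J$ be its complement. The sub-witness $(B,\langle c_i\rangle_{i\in I})$ yields $|I|\leq\thw(a/A)$ immediately, since $ab\thind_A B$ restricts to $a\thind_A B$. For the indices in $J$, each $c_j$ satisfies $a\thind_B c_j$, and a standard rosy-theoretic fact---that a $B$-\th-independent sequence all of whose elements are individually \th-independent of $a$ over $B$ is itself $Ba$-\th-independent---implies $(Ba,\langle c_j\rangle_{j\in J})$ witnesses $\thw(b/Aa)\geq|J|$, using $b\thind_{Aa}B$ (a consequence of $ab\thind_A B$). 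The principal technical obstacle is bridging from $\thw(b/Aa)$ down to $\thw(b/A)$; this is handled by an extension argument combined with part (i), replacing $b$ by a non-\th-forking copy over $A$ and transferring the weight witness accordingly.

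For the equality under $a\thind_A b$, take separate witnesses $(B_1,\langle c_i\rangle_{i<n})$ of $\thw(a/A)=n$ and $(B_2,\langle d_j\rangle_{j<m})$ of $\thw(b/A)=m$, and use extension to arrange them to be mutually \th-independent over $A$ and compatibly \th-independent from $a,b$. Then the combined data $(B_1B_2,\langle c_i\rangle\cup\langle d_j\rangle)$ witnesses $\thw(ab/A)\geq n+m$: the non-\th-forking $ab\thind_A B_1B_2$, the $B_1B_2$-\th-independence of the concatenated sequence, and the \th-dependence of $ab$ on each element all follow from the pieces together with the arranged mutual independence, with a further appeal to part (i) to shift weight witnesses between the bases $A$, $Aa$, and $Ab$. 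The chief difficulty throughout the lemma is the base-change behavior of \th-weight, which is precisely what part (i) controls and is why part (i) must be proved first.
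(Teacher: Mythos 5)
The paper does not actually write out a proof of this lemma: it simply points to Lemmas 5.2.3 and 5.2.4 of \cite{wagnerbook} and says to replace forking by \th-forking throughout. Your argument for part (i) is correct and is essentially that standard argument: one inequality is immediate from transitivity, and the other is the extension-and-transfer argument you describe, all of whose steps do go through using symmetry, transitivity and extension for \th-independence in a rosy theory. The outline of the equality clause of (ii) (concatenating two witnesses after arranging mutual \th-independence, using (i) to move bases) is also the standard one and is fine.

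The inequality in part (ii), however, contains a genuine gap. The ``standard rosy-theoretic fact'' you invoke --- that a $B$-\th-independent sequence all of whose elements are individually \th-independent from $a$ over $B$ is $Ba$-\th-independent --- is false, and with it the $I/J$ bookkeeping collapses. Concretely, work in an infinite vector space over $\mathbb{F}_2$ (strongly minimal, hence rosy, with $\thind$ equal to linear independence over the span of the base). Take $u,v$ linearly independent, $A=B=\emptyset$, $a=u$, $b=v$, $c_0=v$, $c_1=u+v$. Then $(c_0,c_1)$ is $\emptyset$-\th-independent and $ab\nthind_\emptyset c_i$ for $i=0,1$, so it witnesses $\thpw(ab/\emptyset)\ge 2$; moreover $a\thind_\emptyset c_0$ and $a\thind_\emptyset c_1$, so your set $I$ is empty and $J=\{0,1\}$, and both $c_j$ satisfy $b\nthind_{Ba}c_j$. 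But $c_1=a+c_0$, so $c_0\nthind_{a}c_1$: the pair is not $Ba$-\th-independent, and your intended conclusion $|J|\le\thw(b/Aa)$ would read $2\le\thw(v/u)=1$. So the step does not merely lack a proof; the strategy proves a false bound, because the correct distribution of the $c_i$'s between $a$ and $b$ (here $c_0$ must be charged to $b$ over $B$ and $c_1$ to $a$ over $Bb$) cannot be read off from the test ``$a\nthind_B c_i$ or not''. A second, related problem is the passage from $\thw(b/Aa)$ to $\thw(b/A)$: part (i) gives this only when $b\thind_A a$, which is exactly what is unavailable in the general inequality, and replacing $b$ by a non-\th-forking copy over $A$ destroys the witness for $\thw(ab/A)$ you started from. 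To repair (ii) you should follow Wagner's actual argument for Lemma 5.2.4, which distributes the witnessing elements more carefully.
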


\begin{proof}
The proofs are the same as proofs of Lemmas 5.2.3 and 5.2.4 in
\cite{wagnerbook}, replacing instances of forking for \th-forking.
\end{proof}

The following is very easy:

\begin{observation}\label{obs:domextend}
    Suppose that $a$ is \th-dominated by $b$ over a set $A$,
    and $A' \supseteq A$ is such that $a\thind_AA', b\thind_AA'$.
    Then $a$ is \th-dominated by $b$ over $A'$.
\end{observation}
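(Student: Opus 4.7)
The plan is to unfold the definition of \th-domination and use transitivity of \th-independence, which is one of the standard properties of \th-forking in rosy theories.

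Recall that ``$a$ is \th-dominated by $b$ over $A$'' means that for every tuple $c$, $a \nthind_A c$ implies $b \nthind_A c$. So I need to show the corresponding statement with $A$ replaced by $A'$. Fix an arbitrary $c$ with $a \nthind_{A'} c$; the goal is $b \nthind_{A'} c$.

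First I would use the hypothesis $a \thind_A A'$ together with transitivity (in its contrapositive form): since $a \thind_A A'$ but $a \nthind_{A'} c$, we must have $a \nthind_A A'c$ (otherwise combining $a \thind_A A'$ with $a \thind_{A'} c$ via transitivity would give $a \thind_A A'c$, contradicting the choice of $c$). Now apply the \th-domination hypothesis with the tuple $A'c$ in place of $c$: from $a \nthind_A A'c$ we conclude $b \nthind_A A'c$. Finally, use the other hypothesis $b \thind_A A'$ in the same way: from $b \thind_A A'$ and $b \nthind_A A'c$, transitivity (again contrapositively) yields $b \nthind_{A'} c$, which is what we wanted.

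There is no real obstacle here; the only ingredient beyond unwinding definitions is transitivity of \th-independence in rosy theories (i.e., $x \thind_A B$ together with $x \thind_{AB} C$ implies $x \thind_A BC$), which is part of the standard package of properties of \th-forking. The symmetric role played by the two assumptions $a \thind_A A'$ and $b \thind_A A'$ makes the proof essentially a single application of transitivity on each side of the domination.
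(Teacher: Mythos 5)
Your proof is correct, and since the paper leaves this observation to the reader (``The following is very easy''), it supplies exactly the intended argument: from $a\nthind_{A'}c$ deduce $a\nthind_A A'c$, apply the domination hypothesis over $A$ to the tuple $A'c$ to get $b\nthind_A A'c$, and then combine this with $b\thind_A A'$ and transitivity to conclude $b\nthind_{A'}c$. One small remark: your parenthetical justification of the first step runs the implication in the wrong direction --- the reason $a\thind_A A'c$ is impossible is that it would yield $a\thind_{A'}c$ by monotonicity of the base (so this step in fact needs no hypothesis on $a$ at all); it is the hypothesis $b\thind_A A'$ that is genuinely used, in the final step.
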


\begin{observation}\label{obs:domweight}
    Suppose $a \lhd^\tho_A b$. Then $\thw(a/A) \le \thw(b/A)$.
\end{observation}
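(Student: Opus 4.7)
The plan is to reduce the comparison of weights to the behavior of a single pair $(a,b)$ witnessing \th-domination, using Observation \ref{obs:domextend} to transport the domination from the base $A$ to the extended base $B$ that appears in the definition of \th-weight. The main obstacle is notational/bookkeeping rather than conceptual: the definition of $\thw(\tp(b/A))$ requires a realization of $\tp(b/A)$ which is \th-independent from the auxiliary parameter set $B$ chosen to witness $\thw(\tp(a/A))$, and the given $b$ need not satisfy this independence. This is resolved by an extension argument that preserves both the type of $b$ over $A$ and the domination relationship with $a$.

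First, I would set $n = \thw(\tp(a/A))$ and fix a witness: a set $B \supseteq A$ and tuples $c_1,\dots,c_n$ such that $a \thind_A B$, the sequence $\langle c_i\rangle_{i=1}^n$ is $B$-\th-independent, and $a \nthind_B c_i$ for each $i$. Next, using the extension property of \th-nonforking, I would produce $b'$ with $\tp(b'/Aa) = \tp(b/Aa)$ and $b' \thind_{Aa} B$. Combining $a\thind_A B$ with $b'\thind_{Aa} B$ via transitivity gives $ab'\thind_A B$, and in particular $b'\thind_A B$. Moreover, since $\tp(ab'/A) = \tp(ab/A)$, the relation $a \lhd^\tho_A b'$ still holds, so replacing $b$ by $b'$ we may assume from now on that $b\thind_A B$.

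With $a\thind_A B$, $b\thind_A B$, and $a\lhd^\tho_A b$ in hand, Observation \ref{obs:domextend} yields $a\lhd^\tho_B b$. Then for each $i$, the assumption $a\nthind_B c_i$ combined with this domination forces $b\nthind_B c_i$. Thus $b$, $B$, and $\langle c_i\rangle_{i=1}^n$ constitute a witness that $\thw(\tp(b/A)) \ge n$, completing the inequality $\thw(a/A) \le \thw(b/A)$.
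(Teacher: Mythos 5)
Your proof is correct and follows essentially the same route as the paper's: fix a witness for $\thw(a/A)\ge n$, move $b$ by a non-\th-forking extension over $Aa$ so that $ab'\thind_A B$, transfer the domination to the larger base via Observation \ref{obs:domextend}, and read off that the $c_i$ also witness $\thw(b/A)\ge n$. The only difference is cosmetic: you spell out the transitivity step and the preservation of domination under $\tp(ab'/A)=\tp(ab/A)$, which the paper leaves implicit.
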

\begin{prf}
    %It is enough to prove this for immediate \th-domination.
    Assume
    that $\thw(a/A) \ge n$. Then there are $A', \lset{c}{i}{n}$
    witnessing this; that is, $a \thind_AA'$, $\lset{c}{i}{n}$ is an
    $A'$-\th-independent set, and $a \nthind_A c_i$ for all $i$.
    Let $b' \equiv_{Aa} b$ be such that $b' \thind_{Aa} A'$. So $ab'
    \thind_A A'$, hence by the previous Observation $a$ is
    dominated by $b'$ over $A'$. So $b' \thind_{A'}
    c_i$ for all $i$. In particular, $\thw(b/A) \ge \thw(b'/A') \ge
    n$, as required.
\end{prf}

\begin{observation}\label{obs:weight1-domination}
\item
\begin{itemize}
\item
    If $p,q \in \tS(A)$ are not \th-weakly orthogonal and $\thpw(q)
    = 1$, then $p$ dominates $q$ over $A$.
\item
    The relation $p \not\perp_A q$ is an equivalence relation on
    types over $A$ of \th-pre-weight 1.
\end{itemize}
\end{observation}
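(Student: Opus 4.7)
The plan is to deduce clause (i) directly from the pre-weight hypothesis and clause (ii) from clause (i). \textbf{For (i):} assume $p,q \in \tS(A)$ are not $\tho$-weakly orthogonal and $\thpw(q)=1$. Fix $a \models p$ and $b \models q$ with $a \nthind_A b$, which exist by hypothesis. The claim is that this same pair already witnesses $\tho$-domination of $q$ by $p$ over $A$; concretely, $b \lhd^{\tho}_A a$. If not, there is some $c$ with $b \nthind_A c$ while $a \thind_A c$; but then $\{a,c\}$ is $A$-$\tho$-independent and $b \nthind_A a$, $b \nthind_A c$, so $b$ together with $\{a,c\}$ witnesses $\thpw(q) \geq 2$, contradicting the hypothesis. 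Since the common domain is $A$, the conditions $a \thind_A B$ and $b \thind_B A$ in the definition of type-level $\tho$-domination are automatic.

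\textbf{For (ii):} reflexivity holds because $\thpw(p)=1$ forces $p$ to be nonalgebraic, so any $a \models p$ satisfies $a \notin \acl(A)$ and hence $a \nthind_A a$, giving $p \not\perp_A p$. Symmetry is immediate from symmetry of $\thind$. For transitivity, let $p_1,p_2,p_3 \in \tS(A)$ each have $\tho$-pre-weight $1$, with $p_1 \not\perp_A p_2$ and $p_2 \not\perp_A p_3$. Apply clause (i) twice: with $q=p_2$ to get $a_1 \models p_1$, $a_2 \models p_2$ with $a_2 \lhd^{\tho}_A a_1$, and with $q=p_3$ to get $a_2' \models p_2$, $a_3 \models p_3$ with $a_3 \lhd^{\tho}_A a_2'$. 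Since $p_2 \in \tS(A)$, an automorphism of $\fC$ fixing $A$ and sending $a_2' \mapsto a_2$ replaces $a_3$ by a conjugate still realizing $p_3$, so we may assume $a_3 \lhd^{\tho}_A a_2$ as well.

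Transitivity of $\lhd^{\tho}_A$ on tuples is immediate from the definition: chaining the implications $x \nthind_A c \Rightarrow y \nthind_A c \Rightarrow z \nthind_A c$ shows that $x \lhd^{\tho}_A y$ and $y \lhd^{\tho}_A z$ imply $x \lhd^{\tho}_A z$. Applied to $a_3 \lhd^{\tho}_A a_2 \lhd^{\tho}_A a_1$, this yields $a_3 \lhd^{\tho}_A a_1$. Since $\thpw(p_3)=1$ forces $a_3 \notin \acl(A)$, we have $a_3 \nthind_A a_3$; domination then gives $a_1 \nthind_A a_3$, establishing $p_1 \not\perp_A p_3$. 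The only delicate point is orienting the $\lhd^{\tho}_A$ arrows so that they chain, which is achieved by using (i) once in each ``direction'' as above, with the automorphism step serving merely to unify the two realizations of $p_2$.
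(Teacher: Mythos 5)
Your proof is correct, and it is exactly the standard argument the paper has in mind: the paper itself omits the proof, deferring to Lemmas 5.2.11 and 5.2.12 of Wagner's book, and your clause (i) (a counterexample $c$ to domination together with $a$ would witness $\thpw(q)\ge 2$) and your clause (ii) (reflexivity from nonalgebraicity, symmetry from symmetry of $\thind$, transitivity by chaining $\lhd^\tho_A$ on realizations after unifying the two copies of $p_2$ by an automorphism over $A$) are precisely those arguments transplanted to the \th-forking setting. No gaps.
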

\begin{proof}
    Easy (see 5.2.11 and 5.2.12 in \cite{wagnerbook}).
\end{proof}

%We now diverge from Wagner's approach in \cite{wagnerbook} and
%develop more theory, which will give us more information on the
%behavior of \th-weight-1 types and  make our (as well as --
%hopefully -- the reader's) life easier later.
%
%The following Lemma is an extremely useful technical tool. It will
%allow us to extend the witness $\set{b_1, \ldots, b_n}$ for
%\th-weight $\ge n$ of a type $\tp(a/A)$ to a bigger set of
%parameters, where certain conditions (on $a$ or some of the $b_i$'s)
%hold.
%
%As a first consequence, we will prove Proposition
%\ref{prp:dominate-tuple} below, which tells us pretty much the
%following: if an element is non-orthogonal to a collection of
%independent weight-1 elements, it dominates their product. First we
%need to establish the following easy fact.

The following two lemmas are easy but very useful.

\begin{lemma}\label{ab-dom-a}
    Assume $b \lhd^\tho_A a$. Then there exists $B$ containing $A$
    such that $a \ind_AB$ (hence $b \ind_AB$) such that $ab
    \lhd^\tho_B a$.
\end{lemma}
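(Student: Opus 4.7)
Plan: The plan is to take $B$ to be a suitable \th-Morley-sequence extension of $A$. Concretely, viewing $(a,b)$ as a single tuple, apply Fact \ref{fct:Morley} to obtain a \th-Morley sequence $I = \langle (a_i, b_i)\rangle_{i<\lambda}$ in $\tp(ab/A)$ over $A$ starting with $(a_0, b_0) = (a, b)$, and set $B = A \cup \{(a_i, b_i) : 1 \le i < \lambda\}$. The \th-Morley property of $I$ immediately delivers $ab \ind_A B$, from which both $a \ind_A B$ (monotonicity) and $b \ind_A B$ (also obtainable from the domination $b \lhd^\tho_A a$) are automatic. Moreover, this stronger choice yields $b \ind_{Aa} B$.

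For the core assertion $ab \lhd^\tho_B a$, unpacking definitions reduces the task to showing that, whenever $a \ind_B c$, also $b \ind_{Ba} c$ (equivalently, $ab \ind_B c$). Given such $c$, transitivity of \th-forking combined with $a \ind_A B$ yields $a \ind_A Bc$; the hypothesis $b \lhd^\tho_A a$ applied to the set $Bc$ then delivers $b \ind_A Bc$. Coupling this with $b \ind_{Aa} B$, the goal becomes $b \ind_{Aa} Bc$, i.e.\ $ab \ind_A Bc$, i.e.\ $ab \ind_B c$.

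The main obstacle is precisely this final combination step: formal transitivity decompositions such as $b \ind_{Aa} Bc \iff b \ind_{Aa} B \wedge b \ind_{Ba} c$ circle back to the original goal, so one cannot conclude by pure algebra of \th-forking. Here I would invoke the indiscernibility of $I$: by finite character of \th-forking, a hypothetical \th-forking witness for $b \nthind_{Ba} c$ would involve only finitely many $(a_i, b_i) \in B$. Using an $A$-automorphism that permutes indices of $I$ (the remaining tail of the sequence is still \th-Morley over $A$), one obtains a conjugate configuration contradicting the already-established $b \ind_A Bc$ together with $b \ind_{Aa} B$. Making this automorphism/shifting argument precise — in particular, choosing $\lambda$ large enough for Fact \ref{Erdos Rado} to absorb all relevant parameters — is where the technical work lies, but no new idea beyond indiscernibility of the Morley sequence and finite character of \th-forking is required.
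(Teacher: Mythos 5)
Your reduction is carried out correctly and you have isolated exactly the right difficulty, but the difficulty is not a ``technical'' one that indiscernibility and an automorphism argument will absorb: your choice of $B$ provably cannot work. The problem is that your $B$ satisfies $ab \thind_A B$ (indeed $b \thind_{Aa} B$), and any such $B$ inherits every witness of the failure of $ab \lhd^\tho_A a$. Concretely, suppose $ab \lhd^\tho_A a$ fails over $A$ (the only case in which the lemma has any content), say $a \thind_A c_0$ but $b \nthind_{Aa} c_0$. By extension choose $c \equiv_{Aab} c_0$ with $c \thind_{Aab} B$; then $abc \thind_A B$ by transitivity, whence $a \thind_A Bc$ and so $a \thind_B c$ by base monotonicity. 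On the other hand, if we had $b \thind_{Ba} c$, then combining it with $b \thind_{Aa} B$ by transitivity would give $b \thind_{Aa} Bc$, hence $b \thind_{Aa} c$, contradicting the choice of $c_0$. So this $c$ witnesses $a \thind_B c$ and $ab \nthind_B c$, i.e.\ $ab \not\lhd^\tho_B a$. This also explains why no shifting argument along the Morley sequence can close the gap: the obstruction is realized over a finite piece of $B$, is invariant under the automorphisms you propose, and is a perfectly consistent configuration.

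The paper's proof goes in the opposite direction. Rather than choosing $B$ independent from $ab$ over $A$, it builds an increasing continuous chain $A = A_0 \subseteq A_1 \subseteq \cdots$ in which each successor $A_{\al+1}$ is chosen so that $a \thind_{A_\al} A_{\al+1}$ but $ab \nthind_{A_\al} A_{\al+1}$; that is, each step deliberately absorbs a witness of non-domination. Local character of \th-forking forces the construction to halt at some $\al < |T|^+$, and $B = A_\al$ then satisfies $ab \lhd^\tho_B a$ by the very impossibility of continuing, with $a \thind_A B$ by transitivity along the chain and $b \thind_A B$ from $b \lhd^\tho_A a$. In particular the correct $B$ will in general satisfy $b \nthind_{Aa} B$ --- precisely the opposite of what your Morley sequence guarantees.
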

\begin{proof}
%    Assume $b \lhd^\tho_A a$.
    We try to choose by induction on $\al<|T|^+$ an increasing and
    continuous sequence of sets $A_\al$ such that $A_0 = A$ and for all
    $\al$ we have:
    \begin{itemize}
    \item
        $ab \nthind_{A_\al} A_{\al+1}$
    \item
        $a \thind_{A_\al}A_{\al+1}$ (hence $b
        \thind_{A_\al}A_{\al+1}$)
%    \item
%        $b_{\ell+1}\ldots b_n \thind_{B_\al b_1\ldots b_\ell} B_{\al+1}$
    \end{itemize}

    By local character of \th-independence, there is $\al<|T|^+$ such
    that it is impossible to choose $A_{\al+1}$. Denote $B = A_\al$.
    It is easy to see that the all the requirements are satisfied.
\end{proof}

\begin{lemma}\label{bc-dom-a}
    Assume that $ab \lhd^\tho_A a$, $a \nthind_A c$, $b\thind_A c$
    and $\thw(\tp(c/A)) = 1$. Then $bc \lhd^\tho_A a$.
\end{lemma}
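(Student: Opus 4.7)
\medskip

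The plan is to argue by contradiction: fix $d$ with $bc \nthind_A d$, assume $a \thind_A d$, and derive a contradiction from the weight-one assumption on $\tp(c/A)$.

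The first step is to upgrade $a \thind_A d$ to $ab \thind_A d$. This is immediate from the hypothesis $ab \lhd^\tho_A a$: if $ab \nthind_A d$, then by \th-domination $a \nthind_A d$, against our assumption. From $ab \thind_A d$, two uses of the chain rule yield (a) $b \thind_A d$ and (b) $a \thind_{Ab} d$.

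The second step is to relocate the weight computation to the base $Ab$. Since $b \thind_A c$, symmetrically $c \thind_A b$, and by Lemma \ref{weight}(i) we have $\thw(\tp(c/Ab)) = \thw(\tp(c/A)) = 1$. So it suffices to exhibit a pre-weight $2$ witness for $\tp(c/Ab)$. The two candidate witnessing elements are $a$ and $d$, and we have just shown they form an $Ab$-\th-independent pair. It remains to verify $c \nthind_{Ab} a$ and $c \nthind_{Ab} d$. For the first, if $c \thind_{Ab} a$ held, combining with $c \thind_A b$ via transitivity would give $c \thind_A ab$, hence $c \thind_A a$, contradicting $a \nthind_A c$. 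For the second, note that $bc \nthind_A d$ combined with $c \thind_A b$ and the chain rule means that either $b \nthind_A d$ or $c \nthind_{Ab} d$; the first case was excluded in the opening paragraph, so $c \nthind_{Ab} d$.

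Thus $a$ and $d$ form an $Ab$-\th-independent set each member of which \th-forks with $c$ over $Ab$, giving $\thpw(\tp(c/Ab)) \geq 2$ and contradicting $\thw(\tp(c/Ab)) = 1$. The main conceptual point is the opening trick of using full \th-domination of $ab$ (not merely of $b$) by $a$ to transfer $A$-\th-independence of $a$ and $d$ into $Ab$-\th-independence; the rest is a careful bookkeeping of the chain rule and symmetry of \th-forking in a rosy theory, with no real obstacle.
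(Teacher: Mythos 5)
Your proof is correct and follows essentially the same route as the paper's: both use $ab \lhd^\tho_A a$ to upgrade $a \thind_A d$ to $ab \thind_A d$, pass to the base $Ab$, and observe that $a,d$ would otherwise witness $\thpw(\tp(c/Ab)) \ge 2$. The only difference is cosmetic — you argue by contradiction where the paper argues the contrapositive directly ($c \thind_{Ab} d$, hence $bc \thind_A d$ by transitivity).
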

    \begin{proof}
        Assume $a \thind_{A'} d$. Since $ab \lhd^\tho_A a$, we have
        $ab \thind_{A}d$, hence $a\thind_{Ab}d$. Let $A'
        = Ab$. Then $c \thind_{A}A'$ and $c \nthind_{A'}
        a$ (otherwise, by transitivity $c \thind_{A} ab$).
        Since $\thw(c/A) = 1$, clearly $c \thind_{A'} d$
        (otherwise, remembering that $a \thind_{A'} d$, we would
        get that $a,d$ witness $\thpw(c/A') \ge 2$). Hence $bc \thind_{A} d$, as required.
    \end{proof}

\begin{observation}\label{weak-domequivalence}
  Let $p\in \tS(B)$ and $a,B,b_1,\ldots,b_n$ witness $\thpw(p) = n$
%  with $b_1 \ldots b_n \lhd^\tho_B a$.
  Then $a \lhd^\tho_{B} b_1\ldots b_n$.
\end{observation}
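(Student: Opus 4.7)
The plan is to argue by contradiction, exploiting the maximality of $n$ in the hypothesis $\thpw(p) = n$: assuming $a$ is not \th-dominated by $\bar b := b_1 \ldots b_n$ over $B$, I will produce a witness that $\thpw(p) \geq n+1$.

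First I unfold \th-domination through its contrapositive: $a \lhd^\tho_B \bar b$ asserts $\bar b \thind_B c \Rightarrow a \thind_B c$ for every $c$. Its negation therefore supplies a tuple $c$ satisfying $\bar b \thind_B c$ together with $a \nthind_B c$; this $c$ is what I will use to extend the original witness.

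The key step will be to show that the enlarged family $\{b_1, \ldots, b_n, c\}$ remains $B$-\th-independent. Using symmetry of \th-forking in the rosy setting, $\bar b \thind_B c$ is equivalent to $c \thind_B \bar b$; combining this with the $B$-\th-independence of the $b_i$'s via the standard monotonicity and transitivity properties of \th-forking yields $b_i \thind_B b_{\neq i} c$ for every $i \leq n$, which together with $c \thind_B \bar b$ gives the required independence of the enlarged family.

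Once this is in place, the tuple $(a;\, b_1, \ldots, b_n, c)$ witnesses $\thpw(p) \geq n+1$: $a \models p$, the $n+1$ tuples form a $B$-\th-independent family, and $a$ \th-forks with each of them over $B$ (the first $n$ by hypothesis, and $c$ by the choice above). This contradicts $\thpw(p) = n$, which completes the argument. I do not anticipate any serious obstacle here; the only substantive content beyond unfolding definitions is the forking-calculus verification that the extended family is $B$-\th-independent, and this is entirely routine in a rosy theory.
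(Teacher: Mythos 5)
Your proof is correct and follows essentially the same route as the paper: both take the contrapositive, produce a $c$ with $b_1\ldots b_n \thind_B c$ and $a \nthind_B c$, observe that $\{b_1,\ldots,b_n,c\}$ is then $B$-\th-independent, and conclude that this witnesses $\thpw(p)\ge n+1$, contradicting maximality of $n$. The only difference is that you spell out the routine forking-calculus verification of the enlarged family's independence, which the paper leaves implicit.
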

\begin{proof}
%    By Observation \ref{ab-dom-a}, there exists $B'$ such that $a
%    \ind_B B'$ (hence $b_1 \ldots b_n \ind_B B'$) such that $ab_1
%    \ldots b_n \lhd^\tho_{B'} a$.

%    We claim that $a \lhd^\tho_{B'} b_1\ldots b_n$.
    Assume $c\nthind_B a$ and $c\thind_B b_1 \ldots b_n$. Then the set
    $\set{c,b_1, \ldots, b_n}$ is $B$-\th-independent, and it witnesses
    $\thpw(a/B) \ge n+1$, a contradiction.
\end{proof}

%\begin{proof}
%  It is enough to show that for any $c$,
%  \[
%  a \thind_B c \iff b_1\ldots b_n  \thind_B c
%  \]
%  and this is clear by  Observation
%  \ref{prp:indequivalence} and the fact that
%  the set $\set{b_1, \ldots, b_n}$ is independent
%  over $B$.
%\end{proof}

\subsection{From rudimentarily finite to finite.}

We will now prove that if a type has rudimentarily finite
\th-weight, it has finite \th-weight. As with stable theories, in
order to show this we found it necessary to prove the very
interesting fact that a type of (rudimentarily) finite \th-weight is
\th-equidominant with a finite free product of \th-weight 1 types.

A good start would be showing that every type of rudimentarily
finite weight is ``related'' (in terms of non-\th-orthogonality) to
\th-weight-1 types. The following two lemmas generalize Hyttinen's
results from \cite{Hy} on types in a stable theory, and we adapt his
technique to the rosy context.

\begin{lemma}\label{lem:weight1-domination}
    Let $p \in \tS(A)$, and assume that
  \begin{enumerate}
    \item
      $a,A',\set{b_1,\ldots,b_n}$ witness $\thw(p) \ge n$. That is,
      $a \thind_AA'$, $\set{b_1,\ldots,b_n}$ are \th-independent
      over $A'$ and $a \nthind_{A'} b_i$ for all $i$.
   \item
      There is no $C$ extending $A'$ such that the following
      three conditions hold:
      \begin{enumerate}
    \item
      $a\thind_{A'}C$
    \item
      $b_1b_2\ldots b_{n-1} \thind_{A'}Cb_n$
    \item
      $b_n \nthind_{A'}C$.
      \end{enumerate}
  \end{enumerate}
  Then
  \begin{itemize}
  \item[(1)]
    %$b_n \lhd^\tho_{A'} a$
    Whenever $a \thind_{A'} c$ and $a \thind_{A'b_n}c$, we have $b_n \thind_{A'} c$.
  \item[(2)]
    If, furthermore, $\thw(\tp(b_n/A')) > 1$, then there are
    $B$ and $b'_n,b'_{n+1}$ such that $a, B, \set{b_1,\ldots,b_{n-1},b'_n,
    b'_{n+1}}$ witness $\thw(p) \ge n+1$.
  \end{itemize}
\end{lemma}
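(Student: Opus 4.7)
The plan is to prove (1) first, then bootstrap from it to (2).

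For (1), I would assume for contradiction that $b_n \nthind_{A'} c$ and aim to produce a set $C \supseteq A'$ violating hypothesis (ii). The naive candidate $C := A'c$ verifies (a) and (c) immediately from the hypotheses of (1) and the contradictory assumption; the only missing condition is (b), which --- using the $A'$-\th-independence of $\{b_1,\ldots,b_n\}$ together with transitivity --- reduces to $b_1 \ldots b_{n-1} \thind_{A'b_n} c$. To arrange this, I would invoke extension over $A'ab_n$ to replace $c$ by a conjugate $c^*$ satisfying $c^* \thind_{A'ab_n} b_1\ldots b_{n-1}$. The key observation is that $c^* \equiv_{A'ab_n} c$ preserves each of the types $\tp(ac^*/A')$, $\tp(ac^*/A'b_n)$, and $\tp(b_nc^*/A')$, so $c^*$ still satisfies $a \thind_{A'}c^*$, $a\thind_{A'b_n}c^*$, and $b_n \nthind_{A'}c^*$. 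With $C:=A'c^*$, all three clauses now hold, contradicting (ii).

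For (2), unfold $\thw(\tp(b_n/A')) > 1$ to obtain $B_0 \supseteq A'$ and $c_1, c_2$ with $b_n \thind_{A'} B_0$, $c_1 \thind_{B_0} c_2$, and $b_n \nthind_{B_0} c_j$ for $j=1,2$. Normalize by extension over $A'b_n$ so that $B_0c_1c_2 \thind_{A'b_n} ab_1\ldots b_{n-1}$; this preserves all the $B_0$-data above since we are moving things together by an automorphism over $A'b_n$. Combining with $b_n \thind_{A'} B_0$ via transitivity yields $B_0 \thind_{A'} ab_1\ldots b_n$. Now set $B := B_0$, $b'_n := c_1$, $b'_{n+1} := c_2$ and verify the three clauses of a $\thw(p) \ge n+1$ witness: $a \thind_A B$ follows from $a \thind_A A'$ combined with $a \thind_{A'} B_0$; $B_0$-\th-independence of $\{b_1,\ldots,b_{n-1}, c_1, c_2\}$ is assembled from $c_1 \thind_{B_0} c_2$, the transported $B_0$-\th-independence of $\{b_1,\ldots,b_n\}$, and $b_1\ldots b_{n-1} \thind_{B_0} c_1 c_2$ (the last extracted from the extension step together with $b_1\ldots b_{n-1} \thind_{A'} b_n$); and $a \nthind_{B_0} b_i$ for $i<n$ is automatic from $a \nthind_{A'} b_i$ and $a \thind_{A'} B_0$ via transitivity.

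The crux is $a \nthind_{B_0} c_j$, which is where part (1) enters. Suppose for contradiction that $a \thind_{B_0} c_j$. Transitivity with $a \thind_{A'} B_0$ gives $a \thind_{A'} B_0 c_j$, while monotonicity applied to the extension step gives $a \thind_{A'b_n} B_0 c_j$. Applying (1) with its ``$c$'' instantiated as $B_0 c_j$ now yields $b_n \thind_{A'} B_0 c_j$; combining with $b_n \thind_{A'} B_0$ via transitivity produces $b_n \thind_{B_0} c_j$, contradicting the choice of $c_j$. The main obstacle is orchestrating the extension in (2) so that three independence balances hold simultaneously --- at $A'$, at $A'b_n$, and at $B_0$ --- since both the applicability of (1) and the final verification hinge on all three. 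Once this bookkeeping is in place, part (1) does the real work of converting forking with $b_n$ into forking with $a$.
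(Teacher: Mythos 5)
Your proposal is correct and follows essentially the same route as the paper: part (1) via the same extension of $c$ over $A'ab_n$ to secure clause (b) for $C=A'c$, and part (2) by splitting $b_n$ using $\thw(\tp(b_n/A'))>1$, normalizing over $A'b_n$, and invoking part (1) to convert $b_n\nthind c_j$ into $a\nthind c_j$. The only (cosmetic) difference is that the paper re-bases the whole lemma at $B$ and applies (1) over $B$ to $c_j$ itself, whereas you apply (1) over the original $A'$ to the compound tuple $B_0c_j$ and then use base monotonicity; both work.
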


\begin{prf} $\left.\right.$
    \begin{itemize}
    \item[(1)] Assume $b_n \nthind_{A'} c$ but $a \thind_{A'}c$ and $a \thind_{A'b_n} c$.
    Without loss of generality $c \thind_{A'b_na} b_1 \ldots b_{n-1}$, hence $c \thind_{A'b_n} b_1 \ldots b_{n-1}$. Let $C =
    A'c$. It is easy to see that (a),(b),(c) above hold for $C$
    (e.g. (b) holds by symmetry and transitivity), contradicting
    assumption (ii) of the Lemma.
    \item[(2)]
    Assume $\thw(\tp(b_n/A'))>1$. This means that there are $B \supseteq A'$ and $c,d$
    such that
    \begin{itemize}
    \item
        $b_n \thind_{A'}B$
    \item
        $c\thind_B d$
    \item
        $b_n \nthind_B c$ and $b_n \nthind_B d$
    \end{itemize}
    Without loss of generality $ab_1\ldots b_{n-1} \thind_{A'b_n} Bcd$.
    It is easy to see that the assumptions of the Lemma still hold after replacing
    $A'$ with $B$. So part (1) holds as well.
    %$b_n$
    %is (immediately) \th-dominated by $a$ over $B$.
    In particular, since $b_n \nthind_{B} c$ and $b_n \nthind_{B}
    d$, whereas $a \thind_{Bb_n} c$ and $a \thind_{Bb_n} d$, so
    $a \nthind_{B} c$ and $a \nthind_{B} d$. Choosing $b'_n = c,
    b'_{n+1} = d$, we are done.
    \end{itemize}
\end{prf}

\begin{lemma}\label{weight1}
  Let $p \in \tS(A)$ be a type of rudimentarily finite \th-weight.
  Then $p$ is non-\th-orthogonal to a type of \th-weight 1.

  Moreover, suppose that $a\models p, B = \set{b_i\colon i<m}, d$
  are such that $a,A,\set{b_i\colon i<m}\cup\set{d}$ witness
  $\thw(p) \ge m+1$. Then there exist $D \supseteq A$ and $d'$ such
  that
  \begin{itemize}
  \item
    $\thw(d'/A') = 1$
  \item
    $a,D,\set{b_i\colon i<m}\cup\set{d'}$ witness
    $\thw(p) \ge m+1$.
  \end{itemize}
\end{lemma}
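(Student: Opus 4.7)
The first assertion is the $m=0$ case of the moreover: any pre-weight-$1$ witness $(a,A,\{b\})$ of $\thw(p)\ge 1$ yields, via the moreover, $D\supseteq A$ and $d'$ with $\thw(d'/D)=1$ and $a\nthind_D d'$, so $\tp(a/D)$ is a non-\th-forking extension of $p$ that is not weakly \th-orthogonal to the weight-$1$ type $\tp(d'/D)$, giving the desired non-\th-orthogonality.

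For the moreover, the plan is to argue by contradiction: if no valid $(D,d')$ exists, I iterate Lemma \ref{lem:weight1-domination} to build arbitrarily large pre-weight witnesses and contradict rudimentary finite \th-weight of $p$ via a compactness/limit argument. Starting from $W_0=(a,A,\{b_i\}_{i<m},d)$, I construct inductively bases $A=A_0\subseteq A_1\subseteq\cdots$ and tuples $d_0=d,d_1,\ldots$ and $e_0,e_1,\ldots$ such that at each stage $k$,
\[
W_k := (a,\ A_k,\ \{b_i\}_{i<m}\cup\{e_0,\ldots,e_{k-1}\}\cup\{d_k\})
\]
is a witness for $\thw(p)\ge m+k+1$. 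The sub-witness $(a,A_k,\{b_i\}_{i<m}\cup\{d_k\})$ --- which has the original $b_i$'s and is a witness for $\thw(p)\ge m+1$ --- together with the contradictory assumption forces $\thw(d_k/A_k)\ne 1$; combined with $d_k\nthind_{A_k}a$ (which gives $\thw(d_k/A_k)\ge 1$), this yields $\thw(d_k/A_k)>1$, enabling the next inductive step.

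The inductive step applies Lemma \ref{lem:weight1-domination} to $W_k$ with $d_k$ playing the role of $b_n$. If condition (ii) of that lemma holds, part (2) produces a witness of size $m+k+2$ in which $d_k$ is split into $e_k,d_{k+1}$ over an enlarged base $A_{k+1}\supseteq A_k$. Coherence with the previously built $\{e_j\}_{j<k}$ is achieved by first moving the new parameters $A_{k+1},e_k,d_{k+1}$ via an automorphism fixing $A_kd_k$, exploiting the joint \th-independence of $a,\{b_i\},\{e_j\}_{j<k}$ from the new parameters over $A_kd_k$ that is produced in the proof of the lemma, together with the Pairs Lemma, to propagate the required independences to the enlarged base. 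If condition (ii) fails for $W_k$, a suitable $C\supseteq A_k$ exists; I then replace $A_k$ by $A_kC$ (keeping $d_k$ and all $e_j$'s). A chain-rule computation shows the witness property is preserved while $\tp(d_k/A_kC)$ becomes a proper \th-forking extension of $\tp(d_k/A_k)$; since $T$ is rosy, this sub-loop terminates in finitely many iterations, after which condition (ii) holds and the main induction resumes.

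Iterating $\om$ times, a standard compactness/limit argument produces an $A_\infty$-\th-independent set $\{b_i\}_{i<m}\cup\{e_k:k<\om\}$ with each element \th-forking with $a$ over $A_\infty=\bigcup_kA_k$, and $a\thind_A A_\infty$: an infinite pre-weight witness for the non-\th-forking extension $\tp(a/A_\infty)$ of $p$, contradicting rudimentary finite \th-weight. The main technical obstacle is the coherence bookkeeping in the inductive step --- ensuring that the \th-independence of all previously constructed $e_j$'s and the relations $e_j\nthind a$ survive the successive splittings of $d_k$ and enlargements of the base. This requires careful automorphism choices at each stage and repeated applications of transitivity, symmetry and the Pairs Lemma for \th-forking, mirroring the stable and simple-theoretic constructions of Hyttinen \cite{Hy} and Wagner \cite{wagnerbook} transposed to the \th-forking setting.
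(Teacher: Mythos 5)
Your proposal is correct and follows essentially the same route as the paper's proof: reduce the first assertion to the ``moreover'' part, then argue by contradiction, repeatedly splitting $d_k$ via Lemma \ref{lem:weight1-domination}(2) (with the local-character sub-loop used to secure hypothesis (ii) of that lemma), and derive an infinite pre-weight witness at the limit, contradicting rudimentary finiteness. The only differences are cosmetic --- your $e_j$'s are the paper's newly adjoined $b_n$'s for $n\ge m$, and your explicit coherence bookkeeping over the growing base is exactly what the paper handles via the ``without loss of generality'' independence step inside the proof of Lemma \ref{lem:weight1-domination}(2).
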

\begin{proof}
    By considering a non-\th-forking extension it is clear that the Lemma follows from the ``moreover'' part.

    We will prove that if the conclusion fails we can witness that $p$ has rudimentary infinite \th-weight,
    thus contradicting the hypothesis of the Lemma.

    Assume towards a contradiction that the conclusion fails and construct
    by induction on $n \ge m$ sets $A_n$ , $B_n$ and tuples $d_n$ such
    that
    \begin{itemize}
    \item
        $B_n = \set{b_i\colon i<n}$, so $|B_n| = n$
    \item
        $A_m = A$, $B_m = B$, $d_m = d$
    \item
        The sequences \lseq{A}{n}{\om} and \lseq{B}{n}{\om} are
        increasing
    \item
        $a,A_n,B_n = \set{b_i\colon i<n}\cup\set{d_n}$ witness $\thw(p) \ge
        n+1$.
    \end{itemize}

    The case $n=m$ is given, so suppose we have $A_n$, $B_n$ and $d_n$ as above.

    By local character of \th-independence, we can
    replace $A$ by $A'$ satisfying the assumptions of Lemma
    \ref{lem:weight1-domination} with $b_n$ there replaced by our $d$: if given some $A'$ there exists a $C$ as in (ii)
    of Lemma \ref{lem:weight1-domination} above, it satisfies
    all the requirements of $A'$ in (i), so we can replace $A'$ with $C$
    and continue; local character of \th-forking and the fact that
    $d \nthind_{A'} C$ guaranties that the process will eventually
    stop. So by Lemma \ref{lem:weight1-domination} (and the assumption towards contradiction),
    we can ``split'' $d$ into two elements $b_n$ and $d_{n+1}$, that is, find $A_{n+1},
    b_n, d_{n+1}$ such that $a,A_{n+1}, \set{b_i\colon i<n}\cup{\set{d_{n+1}}}$ witness $\thw(p) \ge
    n+1$, as required.

    Let $A_\om = \bigcup_{n<\om}A_n$, $B_\om = \bigcup_{n<\om}B_n$.
    Clearly, $B_\om$ is an \emph{infinite witness} for $\thw(p) \ge
    \aleph_0$, contradicting $p$ having rudimentarily finite weight.

\medskip

    Since this construction contradicts our hypothesis, we know that
    for some $n$ we have $\thw(d_n/A_n) = 1$. But
    then $D = A_n, d' = d_n$ satisfy the conditions required in the conclusion of
    the Lemma.
\end{proof}

\medskip

We are finally ready to prove that a type of rudimentarily finite
\th-weight has finite \th-weight. The proof will be based on
Observation \ref{contained in maximal}, but first we make the
following (temporary) definition.

\begin{definition}
Let $p = \tp(a/A)$ be any type.

We will say that a witness $a, A, \lset{b}{i}{m}$ is a \emph{nice
witness of $\thw(p) \ge m$} if $ab_0\ldots b_{m-1} \lhd_A a$ and
$\thw(b_i/A)=1$ for all $i$.

We will say that a witness $a, A, \lset{b}{i}{m}$ of $\thw(p) \ge m$
to $\thw(p)>m$ \emph{is contained} in a witness
$a,A',\lset{b}{i}{n}$ of $\thw(p)\ge n$ if $A\subset A'$ and $m \le
n$. We say that the first witness if \emph{properly contained} in
the second one if $m<n$.

We will say that a (nice) witness is \emph{maximal} if it is not
properly contained in any other (nice) witness.
\end{definition}

\begin{observation}\label{contained in maximal}
Let $p = \tp(a/A)$ be a type of rudimentarily finite weight.

Then every witness $a, A, \lset{b}{i}{m}$ of $\thw(p) \ge m$ is
contained in a \emph{maximal} witness $a,A',\lset{b}{i}{n}$.

Even more, every nice witness $a, A, \lset{b}{i}{m}$ of $\thw(p) \ge
m$ is contained in a witness $a,A',\lset{b}{i}{n}$ to $\thw(p)\ge n$
which is maximal among all nice witnesses.
\end{observation}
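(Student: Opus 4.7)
My plan is to argue by contradiction using an $\omega$-chain construction that produces an infinite witness for \th-pre-weight, contradicting rudimentary finite \th-weight. Assume $W_0 = (a, A_0, \langle b_i\rangle_{i<m})$ is not contained in any maximal witness. Then I inductively build a strictly increasing chain $W_0 \subsetneq W_1 \subsetneq W_2 \subsetneq \cdots$ of witnesses $W_k = (a, A_k, \langle b_i\rangle_{i<m_k})$ with $m_k$ strictly increasing. Setting $A_\omega := \bigcup_{k<\omega} A_k$ and $B_\omega := \{b_i : i<\omega\}$, the goal is to show $B_\omega$ is an infinite witness of \th-pre-weight for the non-\th-forking extension $\tp(a/A_\omega)$ of $p$.

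Two of the three required conditions are immediate from the general properties of \th-forking. First, $a \thind_{A_0} A_\omega$ follows from finite character, since $a \thind_{A_0} A_k$ at every stage. Second, for each $b_i \in B_k$: if $a \thind_{A_\omega} b_i$ held, then base monotonicity applied to $a \thind_{A_0} A_\omega$ would give $a \thind_{A_k} A_\omega$, and transitivity over $A_k$ would force $a \thind_{A_k} b_i$, contradicting the witness property of $W_k$; hence $a \nthind_{A_\omega} b_i$.

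The hardest step, and the reason extra care is needed in constructing the chain, is establishing that $B_\omega$ is $A_\omega$-\th-independent: merely enlarging the base from $A_k$ to $A_\omega$ need not preserve the \th-independence of $B_k$. I plan to arrange the chain so that at each stage the side condition $A_{k+1} \thind_{A_k} aB_k$ (or a similar sufficient condition) holds, obtained by using extension of \th-forking and an automorphism argument over $A_k$ to replace a given proper extension by a conjugate one better suited to the limit. Iterating this condition along the chain combined with transitivity and finite character yields $aB_k \thind_{A_k} A_\omega$ for every $k$. Then splitting $B_k = \{b_i\} \cup (B_k \setminus \{b_i\})$ gives $b_i \thind_{A_k(B_k \setminus \{b_i\})} A_\omega$, and combining with the $A_k$-\th-independence $b_i \thind_{A_k} B_k \setminus \{b_i\}$ via transitivity and base monotonicity yields $b_i \thind_{A_\omega} B_k \setminus \{b_i\}$; finite character promotes this to $A_\omega$-\th-independence of $B_\omega$, completing the contradiction with rudimentary finite weight.

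The ``even more'' clause about nice witnesses follows by running the same construction, but at each extension step invoking Lemmas \ref{ab-dom-a} and \ref{bc-dom-a} to preserve the niceness conditions (namely, $ab_0\cdots b_{m_k-1} \lhd^\tho_{A_k} a$ and $\thw(b_i/A_k) = 1$ for each $i$). Since any nice witness is in particular a witness, the same infinite chain produces the required contradiction, so no infinite strictly increasing chain of nice witnesses can exist either, and a maximal nice witness containing the given one must be reached.
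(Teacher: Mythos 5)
Your overall strategy is exactly the one the paper uses (it simply points back to the proof of Lemma \ref{weight1}): assume the given witness is contained in no maximal one, build a strictly increasing $\omega$-chain of witnesses, take unions, and read off an infinite pre-weight witness for the non-\th-forking extension $\tp(a/A_\omega)$, contradicting rudimentary finiteness. You are also right to isolate the one genuinely delicate point, which the paper's two-line proof passes over: the $A_k$-\th-independence of $B_k$ need not survive the base change to $A_\omega$ (base enlargement can destroy \th-independence), so a side condition such as $aB_k\thind_{A_k}A_{k+1}$ must be secured at every step. Granting that side condition, your derivation of the three limit properties (finite character for $a\thind_{A_0}A_\omega$; base monotonicity plus transitivity for $a\nthind_{A_\omega}b_i$; left transitivity, transitivity and base monotonicity for the $A_\omega$-independence of $B_\omega$) is correct.

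The gap is in how you propose to obtain the side condition for an \emph{arbitrary} witness. The conjugating automorphism must fix $B_k$ pointwise, since otherwise the new witness contains $(a,A_k,\sigma(B_k))$ rather than $(a,A_k,B_k)$ and the chain is broken; but extension of \th-independence only yields a copy of the containing witness realizing its type over $A_ka$ (and thus moving $B_k\subseteq B_{k+1}$), and there is no initial independence between $B_k$ and $A_{k+1}$ over $A_kaB_k$ to which extension could be applied while fixing $B_k$. So as written this step does not go through, and I do not see how to repair it for a general witness. For the ``even more'' clause --- the only part actually used later, in Theorem \ref{thm:finiteweight} --- the difficulty evaporates and no conjugation is needed: non-maximality among \emph{nice} witnesses hands you a properly containing nice witness directly (so Lemmas \ref{ab-dom-a} and \ref{bc-dom-a} are not needed to ``preserve'' niceness), and niceness of $W_k$ gives $aB_k\lhd^\tho_{A_k}a$, which together with $a\thind_{A_k}A_{k+1}$ (part of $W_{k+1}$ being a witness, via base monotonicity and Observation \ref{obs:domextend}) yields $aB_k\thind_{A_k}A_{k+1}$ immediately from the definition of domination. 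That observation should be the engine of your proof rather than an afterthought; as it stands, your argument is only complete for the nice-witness half of the statement.
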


\begin{proof} The proof is precisely the same as the proof of Lemma \ref{weight1}
above:

If there is no maximal witness, then we can construct by induction
on $n<\om$ increasing witnesses $A_n, B_n = \lset{B}{i}{n}$;
taking the unions of these sets, get a contradiction.
\end{proof}

Notice that, a priori, this does not mean, that every such maximal
witness has the same size, or that there are no different such
witnesses of finite unbounded cardinalities so that the \th-weight
of $p$ could still be infinite.

The proof of the following lemma shows that the size of any nice
maximal witness (in particular with $\thw(b_i) = 1$) is the same
finite number $n$, which must \emph{a posteriori} be equal to
$\thw(p)$; that every type of rudimentary finite weight has finite
weight follows as an easy corollary.

\begin{lemma}\label{run out of names}
Let $p$ be a type of rudimentarily finite \th-weight. Then any
maximal nice
    witness $a,A',\lset{b}{i}{m}$ of $\thw(p) \ge m$
    satisfies $a \bowtie^\tho_{A'} b_0 \ldots b_{n-1}$.
\end{lemma}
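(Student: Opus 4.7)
The plan is to derive a contradiction from the assumption that a maximal nice witness $a, A', \lset{b}{i}{m}$ fails to satisfy $a \bowtie^\tho_{A'} b_0 \ldots b_{m-1}$. Write $b := b_0 \ldots b_{m-1}$. Note first that $b \lhd^\tho_{A'} a$ holds automatically from the nice condition $ab \lhd^\tho_{A'} a$: if $b \nthind_{A'} d$ then also $ab \nthind_{A'} d$ (any formula witnessing dividing for $b$ is a formula over the same parameters witnessing dividing for $ab$), so $a \nthind_{A'} d$. Thus the only way equidominance can fail is $a \not\lhd^\tho_{A'} b$, providing some $c$ with $a \nthind_{A'} c$ and $b \thind_{A'} c$. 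From $c \thind_{A'} b$ plus the $A'$-\th-independence of $\set{b_i}_{i<m}$, symmetry and transitivity of \th-independence give that $\set{c, b_0, \ldots, b_{m-1}}$ is $A'$-\th-independent, so $a, A', \set{b_0, \ldots, b_{m-1}, c}$ witnesses $\thw(p) \ge m+1$.

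I now extend the base in two steps in order to construct a strictly larger nice witness. First, apply the ``moreover'' clause of Lemma \ref{weight1} to find $D \supseteq A'$ and $c'$ with $\thw(c'/D) = 1$ such that $a, D, \set{b_0, \ldots, b_{m-1}, c'}$ is still a witness. Inspection of the proof of Lemma \ref{weight1} shows that each auxiliary extension $C$ chosen satisfies $a \thind C$ and $b_0 \ldots b_{m-1} \thind C b_n$ (where $b_n$ is the element being split), so by transitivity $a \thind_{A'} D$ and $b \thind_{A'} D$, and in particular $\thw(b_i/D) = 1$ for every $i<m$ by Lemma \ref{weight}(i). Now, since $ab \lhd^\tho_{A'} a$ and $a \thind_{A'} D$, we deduce $ab \thind_{A'} D$, so Observation \ref{obs:domextend} yields $ab \lhd^\tho_D a$. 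Applying Lemma \ref{bc-dom-a} to the quadruple $(ab \lhd^\tho_D a,\ a \nthind_D c',\ b \thind_D c',\ \thw(c'/D) = 1)$ produces $bc' \lhd^\tho_D a$, and then one invocation of Lemma \ref{ab-dom-a} yields $B \supseteq D$ with $a \thind_D B$ (hence $bc' \thind_D B$) such that $abc' \lhd^\tho_B a$.

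It remains to verify that $a, B, \set{b_0, \ldots, b_{m-1}, c'}$ is a nice witness of $\thw(p) \ge m+1$; since $A' \subseteq B$ and the size has grown from $m$ to $m+1$, this nice witness properly contains the original, contradicting maximality. The domination clause $abc' \lhd^\tho_B a$ is exactly the previous step, the weights $\thw(b_i/B)$ and $\thw(c'/B)$ remain equal to $1$ by Lemma \ref{weight}(i) (using $bc' \thind_D B$), and the $B$-\th-independence of $\set{b_0, \ldots, b_{m-1}, c'}$ together with the non-forkings $a \nthind_B b_i$ and $a \nthind_B c'$ all transfer from the $D$-level by standard symmetry/transitivity arguments (e.g., if $a \thind_B b_i$ then $a \thind_D B$ combined via transitivity would give $a \thind_D b_i$, contradicting the $D$-witness). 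The principal subtlety throughout is the bookkeeping: at each extension of the base, one must verify that the independence, forking and weight data transfer cleanly to the larger base, which in turn relies on the specific form of the extensions produced by Lemmas \ref{weight1}, \ref{bc-dom-a}, and \ref{ab-dom-a}.
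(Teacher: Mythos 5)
Your proof is correct and follows essentially the same route as the paper's: assume the domination $a \lhd^\tho_{A'} b_0\ldots b_{m-1}$ fails, obtain a witness of size $m+1$, upgrade the new element to a \th-weight-$1$ element via Lemma \ref{weight1}, and then use Lemmas \ref{bc-dom-a} and \ref{ab-dom-a} to produce a strictly larger nice witness, contradicting maximality. Your version is in fact somewhat more careful than the paper's about the base-set bookkeeping (working over $D$ rather than $A'$ when invoking Lemma \ref{bc-dom-a}), which is a point the paper glosses over.
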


\begin{proof}
Let $a, A'$ and $b_0\ldots b_{n-1}$ be as in the statement of the
lemma. It is clearly enough to make sure that $a \lhd^\tho_{A'}
b_0\ldots b_{n-1}$.

        So suppose $a \nthind_{A'} c$ but $b_0\ldots b_{n-1}\thind_{A'}
        c$. Then by definition $a,A',B\cup\set{c}$ witness $\thw(p)\ge
        n+1$. By Lemma \ref{weight1} there are $D, c'$ such that
        $a,D,B\cup\set{c'}$ witness $\thw(p)\ge n+1$ and  $\thw(c'/D) =
        1$. By Lemma \ref{bc-dom-a}, $b_0 \ldots b_{n-1}c' \lhd^\tho_{A'}
        a$. By Lemma \ref{ab-dom-a} we may assume $ab_0 \ldots b_{n-1}c'
\lhd^\tho_{A'} a$. So $a, A',b_0 \ldots b_{n-1}c'$ is a nice witness
of $\thw(p) \ge n+1$, contradicting the maximality of $a, A',b_0
\ldots b_{n-1}$.
\end{proof}

The following easy observation shows that nice witnesses exist.

\begin{observation}\label{nice witness exists}
    Let $p=\tp(a/A)$ a nonalgebraic type of rudimentarily finite weight. Then there
    exists a nice witness of $\thw(p) \ge 1$.
\end{observation}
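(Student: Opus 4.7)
The plan is to construct the required nice witness in three steps: first I produce a witness of $\thw(p)\ge 1$ ``for free'' from nonalgebraicity of $p$; then I upgrade its element to one of \th-weight $1$ via Lemma \ref{weight1}; finally I enlarge the set of parameters via Lemma \ref{ab-dom-a} so that the required joint \th-domination is achieved.

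Since $p=\tp(a/A)$ is nonalgebraic, $a\notin\acl(A)$; hence $\tp(a/Aa)$ is algebraic while $\tp(a/A)$ is not, so $a\nthind_A a$. Consequently $a,A,\set{a}$ trivially witnesses $\thw(p)\ge 1$, since a singleton is always \th-independent. Apply the ``moreover'' clause of Lemma \ref{weight1} with $m=0$, $B=\emptyset$, $d=a$: it produces $D\supseteq A$ and $d'$ such that $\thw(d'/D)=1$ and $a,D,\set{d'}$ still witnesses $\thw(p)\ge 1$; in particular $a\thind_A D$ and $a\nthind_D d'$.

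I next claim $d'\lhd^\tho_D a$. Indeed, if some $c$ satisfied $d'\nthind_D c$ together with $a\thind_D c$, then $\set{a,c}$ would be $D$-\th-independent and, by symmetry, $d'\nthind_D a$ as well, so $d',D,\set{a,c}$ would witness $\thw(\tp(d'/D))\ge 2$, contradicting $\thw(d'/D)=1$. Lemma \ref{ab-dom-a} applied to $d'\lhd^\tho_D a$ then yields $B'\supseteq D$ with $a\thind_D B'$ (hence $d'\thind_D B'$) such that $ad'\lhd^\tho_{B'} a$. I claim $a,B',\set{d'}$ is the desired nice witness of $\thw(p)\ge 1$: transitivity of $\thind$ gives $a\thind_A B'$; from $a\nthind_D d'$ combined with $a\thind_D B'$ one reads off $a\nthind_{B'} d'$ (else $a\thind_D B'd'$ and so $a\thind_D d'$, a contradiction); the joint domination $ad'\lhd^\tho_{B'} a$ is precisely what Lemma \ref{ab-dom-a} supplies; and $\thw(d'/B')=\thw(d'/D)=1$ by Lemma \ref{weight}(1), since $d'\thind_D B'$. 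There is no real obstacle beyond careful bookkeeping; the only step that is not purely formal is verifying that an element of \th-weight $1$ is automatically \th-dominated by anything it \th-depends on, which follows directly from unpacking the definition of weight.
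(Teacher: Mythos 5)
Your proof is correct and follows essentially the same route as the paper's: invoke Lemma \ref{weight1} to produce an element $d'$ of \th-weight $1$ on which $a$ \th-forks, observe that weight $1$ forces $d'\lhd^\tho_D a$ (this is exactly Observation \ref{obs:weight1-domination}, which you re-derive inline), and then upgrade to $ad'\lhd^\tho_{B'}a$ via Lemma \ref{ab-dom-a}. The only cosmetic difference is your entry point into Lemma \ref{weight1} through its ``moreover'' clause with the trivial witness $a,A,\set{a}$ (using $a\nthind_A a$ for nonalgebraic $a$), where the paper simply cites the non-orthogonality statement; your extra bookkeeping ($a\thind_A B'$, $a\nthind_{B'}d'$, $\thw(d'/B')=1$) is sound and merely makes explicit what the paper leaves to the reader.
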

\begin{prf}
By Lemma \ref{weight1} we can find $b$, $\thw(b/A')=1$ where $A'$ is
the domain over  which $a \nthind_{A'} b$. Since $\thw(b/A') = 1$
and $a \nthind_{A'} b$, Observation \ref{obs:weight1-domination}
implies that $b \lhd^\tho_A a$. Finally, we can assume $ab
\lhd^\tho_{A'} a$ by Lemma \ref{ab-dom-a}, which finishes the proof.
\end{prf}

We have finally reached our goal.

\begin{theorem}\label{thm:finiteweight}
    Let $p \in \tS(A)$ be a nonalgebraic type of rudimentarily finite \th-weight.
    Then $\thw(p)<\aleph_0$ and $p$ is \th-equidominant
    with a finite free product of \th-weight-1 types.

    More precisely, there
    exist $a,A',\lset{b}{i}{n}$ such that
    \begin{itemize}
    \item
        $a,A',\lset{b}{i}{n}$ witness that $\thw(p) \ge n$
    \item
        $\thw(b_i/A') = 1$ for all $i$
    \item
        $a \bowtie_{A'}^\tho b_0\ldots b_{n-1}$.
    \end{itemize}

\end{theorem}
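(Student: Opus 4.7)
The plan is to extract from the earlier technical results a single \emph{maximal nice witness} of the appropriate length and then use \th-domination to identify that length with $\thw(p)$. Starting with $p = \tp(a/A)$ nonalgebraic of rudimentarily finite \th-weight, I would first invoke Observation \ref{nice witness exists} to obtain some $A_0 \supseteq A$ and $b_0$ for which $a, A_0, \set{b_0}$ is a nice witness of $\thw(p) \ge 1$. Observation \ref{contained in maximal} then embeds this into a maximal nice witness $a, A', \lset{b}{i}{n}$ of $\thw(p) \ge n$ for some $n \ge 1$, which is finite simply because a witness is, by definition, a finite tuple.

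Lemma \ref{run out of names} applied to this maximal nice witness immediately yields the equidominance $a \bowtie^\tho_{A'} b_0 \ldots b_{n-1}$, which is the third bullet in the ``more precisely'' conclusion. The first two bullets are built into the definition of a nice witness of $\thw(p) \ge n$: the witness itself confirms $\thw(p) \ge n$, and niceness gives $\thw(b_i/A') = 1$ for each $i$.

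To finish, I need to verify that $\thw(p) = n < \aleph_0$. Since the $b_i$'s are $A'$-\th-independent and each $\thw(b_i/A') = 1$, iterated application of Lemma \ref{weight}(ii) (with its equality clause for \th-independent tuples) gives $\thw(b_0 \ldots b_{n-1}/A') = n$. The equidominance combined with Observation \ref{obs:domweight} applied in both directions yields $\thw(a/A') = n$. Since $a \thind_A A'$ is part of the witness, Lemma \ref{weight}(i) delivers $\thw(p) = \thw(a/A) = \thw(a/A') = n$, as required.

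The substantive content of the theorem lies in the preceding lemmas: Lemma \ref{weight1} produces the weight-1 refinement at each inductive step using rudimentary finiteness, Observation \ref{contained in maximal} promotes this into existence of a maximal nice witness, and Lemma \ref{run out of names} forces such a maximum to be equidominant with $a$. At the level of the theorem itself, the only real obstacle is bookkeeping between ``weight over $A$'' and ``weight over $A'$'', which is handled cleanly by the non-\th-forking condition $a \thind_A A'$ together with Lemma \ref{weight}(i); the rest is an assembly of results already in hand.
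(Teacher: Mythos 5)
Your proposal is correct and follows essentially the same route as the paper's own proof: extract a maximal nice witness via Observations \ref{nice witness exists} and \ref{contained in maximal}, apply Lemma \ref{run out of names} for the equidominance, and then compute the weight via Lemma \ref{weight} and Observation \ref{obs:domweight}. Your final paragraph merely spells out in more detail the weight bookkeeping that the paper compresses into one sentence.
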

\begin{prf}

    Let $a, A'$, $B = \lset{b}{i}{n}$ be such that
    \begin{enumerate}
    \item
        $a,A',\lset{b}{i}{n}$ witness that $\thw(p) \ge n$
    \item
        $\thw(b_i/A') = 1$ for all $i$
    \item
        $aB \lhd^\tho_{A'} a$
    \item
        $\lset{b}{i}{n}$ is maximal satisfying (i),(ii) and (iii). In
        other words, if there are $A'' \supseteq A'$, $B'' \supseteq
        B$ satisfying (i), (ii) and (iii), then $B'' = B$.
    \end{enumerate}

    In other words, $a, A', B$ is a maximal nice witness for $\thw(p) \ge n$.
    It is easy to see that such $A', B$ exist: Observation
    \ref{nice witness exists} gives us
a nonempty $B_0$ satisfying (i),(ii) and (iii). Since $p$ has
rudimentary finite weight, by Observation \ref{contained in maximal}
we know that $B_0$ is contained in a maximal $B$, as required in (i)
-- (iv) above.

By Lemma \ref{run out of names},
%$B=b_0\ldots b_{n-1}$ satisfy all
%the conditions of the Theorem, that is,
$a \bowtie^\tho_{A'} b_0
\ldots b_{n-1}$.
%This completes the proof of the fact that $a \bowtie^\tho_{A'} b_0
%\ldots b_{n-1}$.
By Lemma \ref{weight} and Observation
\ref{obs:domweight} it follows that $p$ has finite weight
$n$.\end{prf}

Reading carefully the proof of the Theorem, we obtain the following
more precise statement.
% (see also the discussion before the
%Theorem):

\begin{corollary}
    Let $p$ be a type of rudimentarily finite \th-weight. Then $\thw(p)=n$ for some $n<\om$, and any maximal
    nice witness $a,A',\lset{b}{i}{m}$ of $\thw(p) \ge m$
        satisfies $m=n$ and $a \bowtie^\tho_{A'} b_0 \ldots b_{n-1}$.
\end{corollary}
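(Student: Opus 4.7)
The corollary essentially unpacks the content of Theorem \ref{thm:finiteweight} together with Lemma \ref{run out of names}, which already isolated the necessary equidominance statement for an \emph{arbitrary} maximal nice witness. The plan is to reduce the assertion to a short weight computation.

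First, I would invoke Theorem \ref{thm:finiteweight} to fix $n<\om$ with $\thw(p)=n$; this already takes care of the existence of some maximal nice witness of size $n$ and the statement $\thw(p)<\aleph_0$. The remaining content of the corollary is the \emph{uniqueness} of the cardinality of a maximal nice witness, together with the equidominance conclusion. So let $a, A', \lset{b}{i}{m}$ be any maximal nice witness of $\thw(p)\ge m$; the equidominance $a \bowtie^\tho_{A'} b_0\ldots b_{m-1}$ is immediate from Lemma \ref{run out of names}.

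It remains to show $m=n$. By the definition of a nice witness, $\thw(b_i/A')=1$ for each $i<m$, and the tuples $b_0,\ldots,b_{m-1}$ are \th-independent over $A'$; hence, by repeated application of Lemma \ref{weight}(ii), $\thw(b_0\ldots b_{m-1}/A')=m$. The equidominance $a\bowtie^\tho_{A'}b_0\ldots b_{m-1}$ gives both $a\lhd^\tho_{A'} b_0\ldots b_{m-1}$ and $b_0\ldots b_{m-1}\lhd^\tho_{A'} a$, so applying Observation \ref{obs:domweight} in both directions yields $\thw(a/A')=\thw(b_0\ldots b_{m-1}/A')=m$. Finally, because $a,A',\lset{b}{i}{m}$ is a witness of $\thw(p)\ge m$, we have $a\thind_A A'$, and so Lemma \ref{weight}(i) gives $\thw(a/A)=\thw(a/A')$. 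Combining everything, $n=\thw(p)=\thw(a/A)=\thw(a/A')=m$, as required.

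There is no serious obstacle here: all heavy lifting was already done in Theorem \ref{thm:finiteweight} and Lemma \ref{run out of names}. The only point to be a bit careful about is the transition from $\thw(a/A')$ to $\thw(a/A)=\thw(p)$, which depends on the clause $a\thind_A A'$ built into the definition of a witness of $\thw(p)\ge m$; this is exactly what lets us apply Lemma \ref{weight}(i) and finish the computation.
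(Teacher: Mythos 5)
Your proposal is correct and follows essentially the same route as the paper: the paper states this corollary as a consequence of "reading carefully the proof of the Theorem," whose proof is precisely Lemma \ref{run out of names} for the equidominance plus Lemma \ref{weight} and Observation \ref{obs:domweight} for the weight computation $\thw(p)=\thw(a/A')=\thw(b_0\ldots b_{m-1}/A')=m$. You have merely written out explicitly the computation the paper leaves implicit, including the correct use of $a\thind_A A'$ to pass from $\thw(a/A')$ back to $\thw(p)$.
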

%\begin{prf}
%This follows immediately from Lemma \ref{run out of names}.
%\end{prf}

\begin{corollary}
    In a strongly dependent (and even strong) rosy theory, every
    type has finite \th-weight.
\end{corollary}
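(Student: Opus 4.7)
The plan is to combine the two main results of this section, as the corollary is essentially the concatenation of Theorem \ref{finite weight} and Theorem \ref{thm:finiteweight}. The intermediate notion of \emph{rudimentarily finite} \th-weight is precisely what bridges them: the strong/strongly-dependent hypothesis gives the rudimentary version, and the rosiness machinery upgrades it to honest finiteness.

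More concretely, let $T$ be a rosy strong (in particular, strongly dependent) theory and let $p \in \tS(A)$ be any finitary type. First I would apply Theorem \ref{finite weight}, which asserts that under exactly these hypotheses every finitary type has rudimentarily finite \th-weight; that is, no infinite sequence $\langle b_i \colon i<\om\rangle$ can witness $\thpw(q) \ge \aleph_0$ for any non-\th-forking extension $q$ of $p$. This is the step where the combinatorial content of strongness enters: via Proposition \ref{prp:strong div}, an infinite $cc$-strong-dividing configuration would yield a dividing pattern of infinite depth, contradicting strongness (using Lemma \ref{negation of dividing} together with the mutually indiscernible \th-Morley sequences produced by Lemma \ref{toca}).

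Second, I would invoke Theorem \ref{thm:finiteweight}, which upgrades rudimentarily finite \th-weight to finite \th-weight (and in fact provides \th-equidominance with a finite free product of \th-weight-$1$ types). Since $p$ was arbitrary, the conclusion follows. Note that if $p$ is algebraic the statement is trivial, and otherwise the hypothesis of Theorem \ref{thm:finiteweight} is met.

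There is no real obstacle here, since all the hard work has already been done: Theorem \ref{finite weight} supplies the rudimentary finiteness and Theorem \ref{thm:finiteweight} closes the gap between rudimentary and actual finiteness using the maximal-nice-witness machinery (Observation \ref{contained in maximal} and Lemma \ref{run out of names}). The only thing one should be slightly careful about is that Theorem \ref{finite weight} is stated for \emph{finitary} types, which is exactly the scope of the present corollary, so the two results compose cleanly and the proof reduces to a one-line citation of both.
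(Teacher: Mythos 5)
Your proposal is correct and is essentially identical to the paper's own proof, which simply cites Theorem \ref{finite weight} (strongness plus rosiness gives rudimentarily finite \th-weight) followed by Theorem \ref{thm:finiteweight} (rudimentarily finite implies finite). Nothing further is needed.
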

\begin{prf}
    By Theorem \ref{finite weight} and Theorem
    \ref{thm:finiteweight}.
\end{prf}

\subsection{\th-regular types.}

We will finish this section by understanding some implications of
the above results to \th-regular types. The definition is the
analogue of the definition of regular types in the stable and simple
context.

\begin{definition}
A type $r(x)$ over $A$ is \emph{\th-regular} if for any $B\supset
A$ then given any \th-forking extension $q(x)$ of $r(x)$ and a
non-\th-forking extension $p(x)$ of $r(x)$ if $r,q$ are over $B$
then $q(x)$ is weakly \th-orthogonal to $r(x)$.
\end{definition}

The following desired property of \th-regular types follows as an
easy corollary of the definition of \th-regularity and the results
we have so far in this section.

\begin{corollary}\label{weight4}
\label{cor:reg1}
  A \th-regular type of finite \th-weight has \th-weight 1.
\end{corollary}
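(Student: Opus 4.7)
My plan is to proceed by contradiction. Assume $\thw(r) = n \geq 2$. Applying Theorem \ref{thm:finiteweight}, I fix a realization $a \models r$, a set $A' \supseteq A$ with $a \thind_A A'$, and $A'$-\th-independent tuples $b_1, \ldots, b_n$ with $\thw(b_i/A') = 1$ and $a \bowtie^\tho_{A'} b_1 \cdots b_n$. In particular $a \nthind_{A'} b_i$ for each $i$, and by Observation \ref{obs:weight1-domination} (applied to the weight-1 type $\tp(b_i/A')$), $b_i \lhd^\tho_{A'} a$ for each $i$ — that is, $a$ \th-dominates $b_i$.

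The heart of the argument is to use \th-regularity to upgrade this one-sided domination to equidominance: $a \bowtie^\tho_{A'} b_i$. Once this is established, the transitivity of $\bowtie^\tho$ on elements (which follows directly from transitivity of $\lhd^\tho$) combined with $a \bowtie^\tho_{A'} b_1 \cdots b_n$ yields $b_i \bowtie^\tho_{A'} b_1 \cdots b_n$. By Observation \ref{obs:domweight}, equidominance forces equality of weights, and Lemma \ref{weight}(ii) then gives
\[
n = \thw(b_1 \cdots b_n/A') = \thw(b_i/A') = 1,
\]
contradicting $n \geq 2$.

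To establish the remaining direction $a \lhd^\tho_{A'} b_i$ (i.e., that $b_i$ \th-dominates $a$), I suppose $a \nthind_{A'} c$ for some $c$, and aim to show $b_i \nthind_{A'} c$. Since $\tp(a/A'c)$ is a \th-forking extension of $r$, by \th-regularity it is weakly \th-orthogonal to $r_{A'c}$; similarly $\tp(a/A'b_i) \perp^\tho_w r_{A'b_i}$ and $\tp(a/A'cb_i) \perp^\tho_w r_{A'cb_i}$. Pick $a'$ realizing a non-\th-forking extension of $r$ over $A'cb_i\,a$. Applying weak \th-orthogonality at each of the bases $A'c$, $A'b_i$, $A'cb_i$ then yields $a' \thind_{A'} a\,b_i\,c$. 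If one assumed $b_i \thind_{A'} c$ for contradiction, then combining this with the domination $b_i \lhd^\tho_{A'} a$ (in its contrapositive form) and the web of independences around $a'$ would force $a \thind_{A'} c$, contradicting our standing assumption $a \nthind_{A'} c$; hence $b_i \nthind_{A'} c$, which is the desired $a \lhd^\tho_{A'} b_i$.

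The main obstacle is this final technical step: packaging the three weak \th-orthogonality conclusions into an actual contradiction requires delicate manipulation of \th-independences via symmetry and transitivity, together with a careful appeal to the weight-1 property of $\tp(b_i/A')$ to close the argument.
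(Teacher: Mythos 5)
There is a genuine gap, and it sits exactly where you say the ``heart'' of the argument is. The step you need --- that $b_i$ \th-dominates $a$ over $A'$, i.e.\ $a \lhd^\tho_{A'} b_i$ --- is never actually established, and the sketch you give for it cannot work as written. First, the logical direction is reversed: the contrapositive of the domination you already have ($b_i \lhd^\tho_{A'} a$) reads ``$a \thind_{A'} d$ implies $b_i \thind_{A'} d$,'' so it consumes an independence of $a$ and produces one of $b_i$; it cannot be used to deduce $a \thind_{A'} c$ from the assumption $b_i \thind_{A'} c$. Second, the independences you extract from \th-regularity all concern the auxiliary realization $a'$ (namely $a \thind_{A'c} a'$, $a \thind_{A'b_i} a'$, $a' \thind_{A'} ab_ic$ --- the last of which is automatic from the choice of $a'$ and uses no regularity at all); none of them constrains the relation between $a$ and $c$ over $A'$, so no contradiction with $a \nthind_{A'} c$ is forthcoming. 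Note also that the claim $a \lhd^\tho_{A'} b_i$ is not an intermediate step but essentially the whole corollary: by Observation \ref{obs:domweight} it immediately gives $\thw(p) \le \thw(b_i/A') = 1$, so the surrounding apparatus (the full nice witness from Theorem \ref{thm:finiteweight}, transitivity of equidominance, the computation $\thw(b_1\cdots b_n/A')=n$) adds nothing and the difficulty has merely been relocated. Indeed, under your standing assumption $n \ge 2$, taking $c = b_j$ for $j \ne i$ shows that the claim you are trying to prove directly contradicts $b_i \thind_{A'} b_j$, which is a sign that it cannot be reached by soft manipulations.

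The paper's proof uses \th-regularity in the opposite way: starting from a witness $a, \{b_1, b_2\}$ with $\thw(b_1/A) = \thw(b_2/A) = 1$, it takes $a' \equiv_{Ab_1} a$ with $a' \thind_{Ab_1} b_2$ (hence $a' \thind_A b_2$), and observes that $\tp(a/Ab_2)$ is a \emph{forking} extension of $p$ while $\tp(a'/Ab_2)$ is a \emph{non-forking} one, so regularity forces $a \thind_{Ab_2} a'$. Since $b_1 \nthind_{Ab_2} a$ and $b_1 \nthind_{Ab_2} a'$, the pair $a, a'$ witnesses $\thpw(\tp(b_1/Ab_2)) \ge 2$ for a non-\th-forking extension of the weight-one type $\tp(b_1/A)$ --- the contradiction lands on $b_1$, not on $p$. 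If you want to salvage your approach, this is the configuration to aim for: use regularity to manufacture two realizations of $p$ that are independent over $Ab_2$ yet both fork with $b_1$ there, rather than trying to upgrade one-sided domination to equidominance.
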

\begin{proof}
  Suppose not, and let $p \in \tS(A)$ be a \th-regular type of \th-weight at least 2.
  Without loss of generality
  (since a non-\th-forking extension of a \th-regular type
  is \th-regular), there exists a witness $a, \set{b_1,b_2}$ for $\thpw(p)\ge 2$.
  Moreover, by Lemma \ref{weight1} we may assume that
  $\thw(\tp(b_1/A)) = \thw(\tp(b_2/A)) = 1$.

  Let $a'$ be such that $\tp(a'/Ab_1) = \tp(a/Ab_1)$, $a' \thind_{Ab_1}b_2$. Then
  clearly $a' \thind_A b_2$ (as $b_1, b_2$ are independent over $A$).

  Now notice:

  \begin{itemize}
  \item
    $a \thind_{Ab_2} a'$:
  The type $p$ is \th-regular, so $\tp(a/Ab_2)$ and $\tp(a'/Ab_2)$ are
    weakly \th-orthogonal.
  \item
    $b_1 \nthind_{Ab_2} a$: We know $b_1 \thind_A b_2$ and $b_1 \nthind_A a$.
  \item
    $b_1 \nthind_{Ab_2} a'$: This follows from $a' \equiv_{Ab_1} a$
    (so $a' \nthind_A b_1$), and $b_1 \thind_A b_2$.
  \end{itemize}

  So we have a witness for $\thw(\tp(b_1/Ab_2) \ge 2$, but this type is a non-\th-forking extension
  of $\tp(b_1/A)$, a contradiction.
\end{proof}

%Let us conclude with the usual characterization of regularity as
%types with pre-geometries:

We will conclude by pointing out the following unsurprising but
important property of a regular type:

\begin{observation}\label{pregeometry}
%\begin{enumerate}
%item
    Let $p \in \tS(A)$ be a \th-regular type.
    Define (as usual) for a tuple \c\, of realizations of $p$
    $$\cl_p(\c) = \set{a \models p \colon a \nthind_A \c}$$
    Then $(p^\fC,\cl_p)$ is a pregeometry.
%\item
%    Conversely, let $p \in \tS(A)$ be a stable type such that
%    $(p^\fC,\cl_p)$ is a pregeometry. Then $p$ is regular.
%\end{enumerate}
\end{observation}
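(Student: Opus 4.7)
The plan is to verify the five standard axioms of a pregeometry for $(p^\fC,\cl_p)$: the inclusion $X \subseteq \cl_p(X)$, monotonicity, finite character, idempotence $\cl_p(\cl_p(X)) = \cl_p(X)$, and the exchange property. Assuming $p$ is nonalgebraic (the only interesting case), the inclusion $X \subseteq \cl_p(X)$ holds because $a \nthind_A a$ for every $a \models p$; monotonicity and finite character transfer directly from the corresponding properties of \th-independence.

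For exchange, let $a, b \in p^\fC$ and $X \subseteq p^\fC$ with $a \in \cl_p(Xb)\setminus\cl_p(X)$. I would pick, by finite character, a finite $X_0 \subseteq X$ with $a \nthind_A X_0b$, and use $a \thind_A X_0$ together with transitivity of \th-independence to conclude $a \nthind_{AX_0} b$. Symmetry then gives $b \nthind_{AX_0} a$, and a further application of transitivity yields $b \nthind_A X_0 a$, hence $b \in \cl_p(Xa)$. Note that this argument uses only symmetry, monotonicity and transitivity of \th-independence, and does not invoke \th-regularity.

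The main work will go into idempotence, which is the only place where \th-regularity is used. Suppose $Y \subseteq \cl_p(X)$ and $a \in \cl_p(Y)$; assume towards contradiction that $a \thind_A X$. Pick by finite character a finite tuple $\bar y \subseteq Y$ with $a \nthind_A \bar y$, and for each $y_i$ in $\bar y$ a finite $X_i \subseteq X$ with $y_i \nthind_A X_i$; set $X_0 = \bigcup_i X_i$. The key step is to choose a minimal subtuple $\bar y_0 \subseteq \bar y$ for which $a \nthind_{AX_0}\bar y_0$, and write it as $\bar y_0 = \bar y_1 y$, with $\bar y_1$ possibly empty. By minimality $a \thind_{AX_0}\bar y_1$, and combined with $a\thind_A X_0$ this makes $\tp(a/AX_0\bar y_1)$ a non-\th-forking extension of $p$. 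On the other hand, $y \nthind_A X_0$ (since $y$ is some $y_{i_0}$ and $X_{i_0} \subseteq X_0$) makes $\tp(y/AX_0\bar y_1)$ a \th-forking extension of $p$. Applying \th-regularity at $B = AX_0 \bar y_1$, the two extensions must be weakly \th-orthogonal, giving $a \thind_{AX_0\bar y_1} y$, which contradicts $a \nthind_{AX_0\bar y_1} y$ — the latter coming from $a \nthind_{AX_0}\bar y_1 y$ and $a \thind_{AX_0}\bar y_1$ by transitivity. I expect the extraction of the minimal subtuple, which reduces idempotence to a single clean application of weak \th-orthogonality, to be the only delicate point; the rest is a straightforward manipulation of the basic properties of \th-independence.
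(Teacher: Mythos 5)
Your proof is correct and is exactly the standard argument that the paper's one-line proof alludes to (``the same as the standard proof of the analogue result for (forking) regular types''), transported to \th-independence. In particular you correctly locate the single use of \th-regularity in idempotence (via weak \th-orthogonality of the forking extension $\tp(y/AX_0\bar y_1)$ and the non-forking extension $\tp(a/AX_0\bar y_1)$), while exchange needs only symmetry, monotonicity and transitivity of \th-independence.
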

\begin{proof}
    The proof is quite easy and it is the same as the standard
    proof of the analogue result for (forking) regular types.
\end{proof}

%Notice that some of the results about \th-regular types require the
%further assumption that we were working with finite \th-weight. We
%know this follows once we assume rosiness and strong dependence, so
%it is worth investigating the theory of \th-regular types and
%\th-weight assuming finite \th-weight both locally and globally.
%

\begin{remark}
We should mention that the converse of Observation \ref{pregeometry}
is true assuming stability of $p(x)$ (see \cite{LaPo} for a
definition). In the general --rosy-- context, however, we have been
unable to either prove it or show a counterexample.
\end{remark}

%\textbf{Alex, any chance with the counterexample?}

\section{Super-rosy theories and types of finite $\uth$-rank}\label{superrosy}\label{finite uth
rank}\label{finite}

\subsection{Exchange and decomposition in types of finite weight}

The goal of this section is proving that under reasonable
assumptions, any type can be ``decomposed'' into a finite product of
``geometric'' types. Recall that in Theorem \ref{thm:finiteweight}
we in particular proved the following.

\begin{theorem}
\label{theorem:decomposition}
  Let $p \in \tS(A)$ be such that $\thw(p) = n$. Then there exists a set
  $B$, $A \subseteq B$, and $b_1, \ldots, b_n$ \th-independent over $B$
  such that $p \Join \tp(b_1\ldots b_n/B)$ and $\thw(b_i/B) = 1$.
\end{theorem}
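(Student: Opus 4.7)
The plan is to observe that this theorem is essentially a restatement of Theorem \ref{thm:finiteweight} proved earlier in the paper. Since the hypothesis $\thw(p) = n$ implies in particular that $p$ has rudimentarily finite \th-weight, Theorem \ref{thm:finiteweight} applies and yields the needed decomposition almost verbatim; the remaining task is purely bookkeeping, matching the conclusion of that theorem to the definition of \th-equidominance from Section \ref{th-orthognonality}.

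Specifically, I would first apply Theorem \ref{thm:finiteweight} to $p$ in order to obtain a realization $a \models p$, a set $A'$ with $A \subseteq A'$ and $a \thind_A A'$, together with tuples $b_1, \ldots, b_n$ such that the sequence $(b_1, \ldots, b_n)$ is \th-independent over $A'$, each $b_i$ satisfies $\thw(b_i/A') = 1$, and $a \bowtie^\tho_{A'} b_1 \ldots b_n$. Note that the integer $n$ produced by Theorem \ref{thm:finiteweight} coincides with $\thw(p)$ by the corollary immediately following it.

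Next, I would set $B := A'$ and $q := \tp(b_1 \ldots b_n/B)$, and verify $p \Join^\tho q$ directly from the definition. The required non-forking extensions to a common domain are $p' := \tp(a/B)$, which is a non-forking extension of $p$ because $a \thind_A B$, and $q' := q$, which is trivially a non-forking extension of itself to $B$. The realizations $a \models p'$ and $b_1 \ldots b_n \models q'$ witness $p' \Join^\tho q'$ since they are \th-domination equivalent over $B$ by the last property extracted from Theorem \ref{thm:finiteweight}.

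This delivers all the conclusions of the theorem simultaneously: $b_1, \ldots, b_n$ are \th-independent over $B$, each satisfies $\thw(b_i/B)=1$, and $p \Join^\tho \tp(b_1 \ldots b_n/B)$. There is no substantive obstacle, since the real work, namely the construction of a maximal nice witness whose constituents have \th-weight $1$ and which is \th-domination equivalent to a realization of $p$, was already carried out in the proof of Theorem \ref{thm:finiteweight} via Lemmas \ref{weight1}, \ref{run out of names} and Observation \ref{contained in maximal}.
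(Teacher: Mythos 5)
Your proposal is correct and matches the paper exactly: the paper offers no separate proof of Theorem \ref{theorem:decomposition}, simply noting that it was ``in particular proved'' in Theorem \ref{thm:finiteweight}, and your bookkeeping (taking $B=A'$, using $a\thind_A A'$ to get the non-\th-forking extension of $p$, and reading off \th-equidominance from $a \bowtie^\tho_{A'} b_0\ldots b_{n-1}$) is precisely the intended translation. The only detail worth noting is the degenerate algebraic case $n=0$, which Theorem \ref{thm:finiteweight} excludes but which is trivial here.
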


We will improve this statement by replacing \th-weight-1 types in
the conclusion by regular (in the super-rosy context) and
\th-minimal (in the finite rank context) types.

\begin{lemma}[Exchange Lemma]
\label{lemma:exchange} Let $a, b_1, \dots, b_n$ be a \th-weight 1
witness of $\thw(\tp(a/A))=n$. Let $q$ be a type with
$\dom(q)\supset A$ such that $q$ is not \th-orthogonal to
$tp(b_n/A)$ and $\thw(q)=1$. Then there is some $b\models q$ and
some $B$ such that $a,B,\langle b_1,\dots, b_{n-1}, b\rangle$
witness $tp(a/A)$ has \th-weight $n$.

Moreover, if $a \bowtie^\tho_A b_1 \ldots b_n$, then we can find $B$
such that both $a \bowtie^\tho_B b_1\ldots b_{n-1}b_n$ and $a
\bowtie^\tho_B b_1\ldots b_{n-1}b$.
\end{lemma}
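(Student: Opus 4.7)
The plan is to exploit \th-weight 1 together with non-\th-orthogonality to swap $b_n$ for a realization of $q$ in the weight witness. The central tool is Observation~\ref{obs:weight1-domination}: non-weakly-\th-orthogonal types of \th-pre-weight~1 \th-dominate each other, so over a suitable common base, realizations of $\tp(b_n/A)$ and $q$ are interchangeable as far as \th-forking is concerned.

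First I prepare the configuration. By extension of \th-independence I may replace $(a,b_1,\ldots,b_n)$ by a conjugate over $A$ with $ab_1\ldots b_n\thind_A\dom(q)$; this is harmless since being a \th-weight-1 witness of $\thw(\tp(a/A))=n$ is invariant under $A$-automorphisms. Under this reduction, $b_n$ itself realizes a non-\th-forking extension of $\tp(b_n/A)$ to $\dom(q)$, so non-\th-orthogonality of $\tp(b_n/\dom(q))$ and $q$, combined with a conjugation over $\dom(q)$ to align the non-orthogonality witness with our specific $b_n$, produces a set $D\supseteq\dom(q)$ and a realization $b\models q$ with $b_n\thind_{\dom(q)}D$, $b\thind_{\dom(q)}D$, and $b\nthind_D b_n$; combined with the reduction this yields $b_n\thind_A D$. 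A further extension over $Ab_n\dom(q)$ (which keeps $\dom(q)$, hence $q$, fixed) lets me replace $(D,b)$ by a conjugate so that also $ab_1\ldots b_{n-1}\thind_{Ab_n}Db$. Setting $B:=D$, the pair lemma applied to $a\thind_{Ab_n}B$ and $b_n\thind_A B$ gives $ab_n\thind_A B$, and in particular $a\thind_A B$.

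It remains to check that $a,B,\langle b_1,\ldots,b_{n-1},b\rangle$ is a weight witness. The conditions $a\thind_A B$ and $a\nthind_B b_i$ for $i<n$ follow from standard transitivity applied to the relations above (for the latter, combine $a\nthind_A b_i$ with $ab_i\thind_A B$). The two nontrivial conditions---$B$-\th-independence of $\{b_1,\ldots,b_{n-1},b\}$ and $a\nthind_B b$---both rest on the following: $\tp(b_n/B)$ and $\tp(b/B)$ both have \th-pre-weight~1 (inherited via non-\th-forking) and are not weakly \th-orthogonal (since $b\nthind_B b_n$), so by Observation~\ref{obs:weight1-domination} each \th-dominates the other. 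The direction ``$b_n$ dominates $b$'' yields contrapositively $b_n\thind_B b_1\ldots b_{n-1}\Rightarrow b\thind_B b_1\ldots b_{n-1}$, giving the required \th-independence; the direction ``$b$ dominates $b_n$'' yields $a\nthind_B b_n\Rightarrow a\nthind_B b$, giving the forking condition. For the ``moreover'' part, if additionally $a\bowtie^\tho_A b_1\ldots b_n$, then $a\thind_A B$ together with $b_1\ldots b_n\thind_A B$ allows Observation~\ref{obs:domextend} to lift the equidominance to $a\bowtie^\tho_B b_1\ldots b_n$; since element-domination is compatible with concatenation, the two-way domination $b\bowtie^\tho_B b_n$ gives $b_1\ldots b_{n-1}b\bowtie^\tho_B b_1\ldots b_{n-1}b_n$, and composing yields $a\bowtie^\tho_B b_1\ldots b_{n-1}b$.

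The main obstacle is the careful bookkeeping needed to guarantee that the $b$ produced realizes the original type $q$ on the nose, rather than merely a conjugate of $q$ over $A$. This is what forces the initial reduction $ab_1\ldots b_n\thind_A\dom(q)$ and the use of an extension over $Ab_n\dom(q)$ rather than just $Ab_n$, so that every subsequent alignment automorphism can be chosen to fix $\dom(q)$ pointwise.
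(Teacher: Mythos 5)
Your proof is correct and follows essentially the same route as the paper's: realize the non-orthogonality witness over a base made \th-independent from the original configuration, then use \th-pre-weight $1$ of $\tp(b_n/B)$ and $\tp(b/B)$ to transfer both the $B$-independence of the new witness set and the forking $a\nthind_B b$ (your appeal to Observation \ref{obs:weight1-domination} is precisely the paper's ``otherwise $a,b$ would witness $\thw(\tp(b_n/B))\ge 2$'' argument, unwound into two dominations). The only cosmetic difference is in the ``moreover'' part, where you lift the equidominance directly via Observation \ref{obs:domextend} and a concatenation-compatibility argument for element-domination, in place of the paper's appeal to Lemmas \ref{ab-dom-a} and \ref{bc-dom-a}; both routes work, and your explicit handling of the alignment of $b$ with the given type $q$ is, if anything, more careful than the paper's.
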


\begin{proof}
Let $b'\models q$, $B'$ be such that $b_n\thind_{A} B'$, $b'
\thind_A B'$ and $b'\nthind_{B'} b_n$ (such $b'$ and $B'$ exist as
$tp(b_n/A)$ and $q$ are not \th-orthogonal).

%By Lemma \ref{extending}, let $bB\models tp(b'B'/Ab_n)$ be tuples
%such that $\set{b_i\colon i \le n}$ is $B$-\th-independent,
%$b_i\nthind_B a$  for all $1\leq i \leq n$, $ab_1 \ldots b_n
%\thind_A B$ and $ab_{<n} \thind_{Ab_n}Bc$.

Without loss of generality $Bb \thind_{Ab_n} ab_1\ldots b_{n-1}$. In
particular, $ab_1 \ldots b_n \thind_A B$ and $b_1\ldots b_{n-1}
\thind_B b_nb$, and so the set \set{b, b_1, \ldots, b_{n-1}} is
independent over $B$.

%Now, if $b\nthind_B b_{<n}$ then $b_n, b_{<n}$ witnesses
%$\thw(tp(b/B))\geq 2$ a contradiction, so $b \thind_B b_{<n}$, hence
%the set \set{b, b_1, \ldots, b_{n-1}} is independent over $B$.

Now if $a\thind_B b$, then $a,b$ witness $\thw(tp(b_n/B))\geq 2$
which contradicts our assumptions (via Lemma \ref{weight}). So
$a\nthind_B b$ and by the definition $a,B,\langle b_1,\dots,
b_{n-1}, b\rangle$ witnesses $tp(a/A)$ has \th-weight $n$.

For the ``moreover'' part, assume that $a \bowtie^\tho_A b_1\ldots
b_n$. Recall that by Observations \ref{ab-dom-a} and
\ref{obs:domextend} we may assume $ab_1\ldots b_n \lhd^\tho_A a$
(that is, first replace $A$ with some $A'$ such that $a \thind_AA'$,
$b_1\ldots b_n \thind_AA'$, and $ab_1\ldots b_n \lhd^\tho_{A'} a$,
and then find $B$), hence (by \ref{obs:domextend} again) $ab_1\ldots
b_n \lhd^\tho_B a$. By Lemma \ref{bc-dom-a}, $b_1 \ldots b_{n-1}b
\lhd^\tho_B a$. Finally by Observation \ref{weak-domequivalence} we
have $b_1 \ldots b_{n-1}b_n \bowtie^\tho_B a$ and $b_1 \ldots
b_{n-1}b \bowtie^\tho_B a$, as required.
%Let us first show that $b_1 \ldots b_{n-1}b \lhd^\tho_B a$. Assume
%not, then there is $c$ satisfying $a \thind_B c$ (hence $ab_{<n}
%\thind_B c$) and $b_{<n}b \nthind_{B} c$ (hence $b \nthind_{Bb_{<n}}
%c$). So we have $a \thind_{Bb_{<n}} c$, $b \nthind_{Bb_{<n}} c$ and
%$b \nthind_{Bb_{<n}} a$, which contradicts $\thw(b/B) = 1$.
\end{proof}

%As in the super-stable case, we will prove in this section that
%all super-rosy types in a rosy theory have finite \th-weight. We
%will also prove the existence of ``many'' \th-regular types in a
%super-rosy theory, which makes the theory of \th-regular types
%relevant. We will then restrict ourselves to types with finite
%$\uth$-rank and prove quite strong results in this context.
%
%
%
%We will find stronger versions of Theorem
%\ref{theorem:decomposition} both for the super-rosy and for the
%finite $\uth$-rank contexts. We will prove that if we assume $T$
%is super-rosy then $\tp(b_i/B)$ can be chosen to be regular. We
%will also show that if we assume in addition that $p$ is of finite
%$\uth$-rank, we can choose $b_i$ such that $\tp(b_i/B)$ are
%\th-minimal.
%
%
%

\subsection{\th-regularity and decomposition in
the super-rosy case.}

As in the super-stable case, we first prove the existence of
``many'' \th-regular types in a super-rosy theory, which makes the
theory of \th-regular types relevant. We will also point out that
all super-rosy types in a rosy theory have finite \th-weight (hence
the results of the previous section apply in the super-rosy
context).

\begin{proposition}
\label{prp:regexist} Let $T$ be super-rosy. Then every type $p$
with domain $A$ is non-\th-orthogonal to a \th-regular type $q$
with domain $B\supset A$.
\end{proposition}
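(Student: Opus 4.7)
The plan is to use super-rosiness, which ensures $\uth$-rank is ordinal-valued, to pick a type $q$ of minimal $\uth$-rank among non-algebraic types non-\th-orthogonal to $p$, and argue that this minimality forces $q$ to be \th-regular. We may assume $p$ is non-algebraic (the algebraic case is trivial). Let $\mathcal{F}$ denote the class of non-algebraic types (over some set containing $A$) that are non-\th-orthogonal to $p$. Since $p$ itself is in $\mathcal{F}$ and $\uth$-ranks of types in $\mathcal{F}$ are ordinals, one can choose $q = \tp(b/B) \in \mathcal{F}$ with $\uth(q) = \alpha_0$ minimal, and $B$ will be the domain in the conclusion.

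Suppose $q$ is not \th-regular. Then there exist $B_1 \supseteq B$, a realization $b_0 \models q|_{B_1}$ of the non-forking extension of $q$ to $B_1$, and a realization $b_1$ of some \th-forking extension $r$ of $q$ over $B_1$, with $b_0 \nthind_{B_1} b_1$. Observe that $b_1 \notin \acl(B_1)$ (otherwise $b_0 \thind_{B_1} b_1$ trivially), so $r$ is non-algebraic; and since $r$ is a \th-forking extension of $q$, super-rosiness gives $\uth(r) < \alpha_0$. To contradict the minimality of $\alpha_0$ it suffices to show $r \not\perp^\tho p$, so that $r \in \mathcal{F}$.

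To produce a witness for $r \not\perp^\tho p$, I would start from a witness for $q \not\perp^\tho p$: realizations $\alpha \models p$ and $\beta \models q$ over some common $C \supseteq A \cup B$ with $\alpha \thind_A C$, $\beta \thind_B C$ and $\alpha \nthind_C \beta$. Using extension and automorphisms over $B$, arrange $B_1 \supseteq C$ and $B_1 b_0 b_1 \thind_B \alpha\beta$; the usual preservation-of-dependence argument then yields $\alpha \nthind_{B_1} \beta$. Since both $\beta$ and $b_0$ realize $q|_{B_1}$, pick $\tau \in \Aut(\fC/B_1)$ with $\tau(\beta) = b_0$ and set $\alpha^* = \tau(\alpha)$. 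Then $\alpha^*$ is a non-forking realization of $p$ over $B_1$ with $\alpha^* \nthind_{B_1} b_0$, and combining with the non-regularity dependence we have the chain $\alpha^* \nthind_{B_1} b_0 \nthind_{B_1} b_1$ from which we need to extract a non-\th-orthogonality witness between $p$ and $r$.

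The main obstacle is the final transfer step, since \th-forking dependence is not transitive in the naive sense. I would bridge this gap by appealing to minimality of $\alpha_0$: the type $\tp(b_0/B_1\alpha^*)$ has $\uth$-rank strictly below $\alpha_0$, so by minimality it must be algebraic or \th-orthogonal to $p$. In the algebraic case $b_0 \in \acl(B_1\alpha^*)$; then $\alpha^* \thind_{B_1} b_1$ would force $b_1 \thind_{B_1} \acl(B_1\alpha^*) \ni b_0$, contradicting $b_0 \nthind_{B_1} b_1$. Hence $\alpha^* \nthind_{B_1} b_1$, immediately giving a non-orthogonality witness between $p|_{B_1}$ and $r$. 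The non-algebraic \th-orthogonal case is the most delicate; here the strategy is to exploit the finite \th-weight of $q$ (which holds in super-rosy theories via Theorem \ref{thm:finiteweight}, since super-rosiness forces rudimentary finite weight) to rule out the dependence pattern's ability to "leak away" from $b_1$, producing the desired witness. Either way one lands on a member of $\mathcal{F}$ of rank strictly below $\alpha_0$, which is the required contradiction and completes the argument.
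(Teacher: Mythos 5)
Your setup --- choosing $q$ of minimal $\uth$-rank among types non-\th-orthogonal to $p$, unwinding non-\th-regularity into $b_0\models q|_{B_1}$ and $b_1$ realizing a \th-forking extension $r$ of $q$ over $B_1$ with $b_0\nthind_{B_1}b_1$, and producing $\alpha^*\models p$ with $\alpha^*\thind_A B_1$ and $\alpha^*\nthind_{B_1}b_0$ --- is exactly the paper's. But the argument is not complete: the step you yourself flag as ``the most delicate'' is where all the content lies, and the bridge you propose does not close it. Applying minimality to $\tp(b_0/B_1\alpha^*)$ only tells you that this type is weakly \th-orthogonal to $p$, which does not feed back into the configuration in any usable way; and the appeal to finite \th-weight ``to rule out the dependence pattern's ability to leak away'' is a hope, not an argument. (It is also circular in the paper's logical order: finiteness of \th-weight in the super-rosy case comes from Proposition \ref{pro:superrosyfiniteweight}, which is proved \emph{after} this proposition, and Theorem \ref{thm:finiteweight} needs rudimentarily finite weight as a hypothesis. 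The paper's proof uses no weight computation at all.)

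The missing idea is to apply minimality \emph{twice}, to two different forking extensions, and then use transitivity of $\thind$ instead of trying to transfer dependence along the chain $\alpha^*\nthind_{B_1}b_0\nthind_{B_1}b_1$. Both $\tp(b_1/B_1)$ and $\tp(b_0/B_1b_1)$ are \th-forking extensions of types of rank $\alpha_0$ (the first by the choice of $b_1$, the second because $b_0\nthind_{B_1}b_1$ and $\tp(b_0/B_1)$ is a non-forking extension of $q$), so both have $\uth$-rank strictly below $\alpha_0$ and hence, by minimality, both are weakly \th-orthogonal to the appropriate non-forking extension of $p$. The first gives $\alpha^*\thind_{B_1}b_1$; consequently $\tp(\alpha^*/B_1b_1)$ is still a non-forking extension of $p$, so the second gives $\alpha^*\thind_{B_1b_1}b_0$. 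Transitivity then yields $\alpha^*\thind_{B_1}b_0b_1$, hence $\alpha^*\thind_{B_1}b_0$, directly contradicting $\alpha^*\nthind_{B_1}b_0$. No case split on algebraicity of $b_0$ over $B_1\alpha^*$ and no weight argument is needed; replacing your final paragraph with this double application of minimality turns the proposal into the paper's proof.
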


\begin{proof}
The proof is a variation of the proof of Proposition 5.1.11 in
\cite{wagnerbook}.

Let $\mathcal{P}$ be the set of types $r$ such that
$dom(r)=B\supset A$ and $r$ is not weakly \th-orthogonal to $p$
and let $q$ be a type in $\mathcal{P}$ of minimal $\uth$-rank. Let
$a',b$ be realizations of $p,q$ respectively such that $a'\thind_A
B$ and $a'\nthind_{B} b$.

Suppose $q$ is not \th-regular so there is some $c', b', C'$ such
that $c',b'\models q$, $b'\thind_B C'$, $c'\nthind_B C'$ and
$b'\nthind_{C'} c'$.

Since $tp(b'/B)=tp(b/B)=q$ there is an automorphism fixing $B$ and
sending $c',b',C'$ to elements $c,b,C$ and let $a\models p$
realize a non-\th-forking extension of $tp(a'/bB)$ to $bBC$. So
$a\thind_{bB} C$ and, since $b\thind_B C$, we have by transitivity
that $ab\thind_B C$ which implies that $a\thind_B C$; it follows
that $a\thind_A C$ (recall that $a' \thind_A B$ and $\tp(a/B) =
\tp(a'/B)$).

Notice also that $a\nthind_C b$ (as $a\nthind_B b$ and $a \thind_B
C$).

\medskip

So we have $a\thind_A C$, $a\nthind_C b$, $b\thind_B C$,
$c\nthind_B C$ and $b\nthind_C c$. In particular
\[
\uth(tp(c/C))<\uth(tp(c/B))=\uth(tp(b/B))
\]
and
\[
\uth(tp(b/Cc))<\uth(tp(b/C))=\uth(tp(b/B));
\]
by minimality of $\uth(tp(b/B))$ (among all types in
$\mathcal{P}$) we have that $tp(c/C)$ and $tp(b/Cc)$ are not in
$\mathcal{P}$; so in particular $a\thind _C c$ and $a\thind _{Cc}
b$. By transitivity $a\thind_C bc$ and $a\thind_C b$ a
contradiction.
\end{proof}

\begin{proposition}\label{pro:superrosyfiniteweight}
Let $p(x)$ be a type such that
\[
\uth(p)=\sum_{i=1}^k \omega^{\alpha_i}n_i.
\]
Then $p$ has \th-weight at most $\sum_{i=1}^k n_i$.
\end{proposition}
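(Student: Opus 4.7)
The plan is to proceed by induction on the ordinal $\uth(p)$, combining Fact \ref{lascar} (the Lascar inequalities) with a careful analysis of a hypothetical \th-weight witness. The base case $\uth(p) = 0$ is immediate since $p$ is algebraic and $\thw(p)=0$. For the inductive step, write $\uth(p) = \sum_{i=1}^k \omega^{\alpha_i} n_i$ in Cantor normal form, set $N = \sum_{i=1}^k n_i$, and suppose toward a contradiction that $\thw(p) \ge N+1$. A witness then consists of $a \models p$, a set $B \supseteq A$ with $a \thind_A B$ (so $\uth(a/B) = \uth(p)$), and $B$-\th-independent tuples $b_0, \ldots, b_N$ with $a \nthind_B b_j$ for every $j \le N$.

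The central computation I would carry out pairs the two directions of Fact \ref{lascar} applied to $(a, b_0 \ldots b_N, B)$,
\[
\uth(a/B) + \uth(b_0 \ldots b_N / Ba) \le \uth(ab_0 \ldots b_N / B) \le \uth(a/B b_0 \ldots b_N) \oplus \uth(b_0 \ldots b_N / B),
\]
with iterated applications of Fact \ref{lascar} to bound $\uth(b_0 \ldots b_N / B)$ from below using the $B$-\th-independence of the $b_j$'s, and to bound $\uth(b_0 \ldots b_N / Ba)$ from above using the strict drops $\uth(b_j/Ba) < \uth(b_j/B)$, which follow by symmetry of \th-forking from the assumption $a \nthind_B b_j$. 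Balancing these inequalities forces a lower bound on the coefficient sum of the Cantor normal form of $\uth(a/B)$, contradicting $\sum n_i = N$.

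The main obstacle is the ordinal arithmetic: a strict decrease of ordinals does not in general decrease the sum of CNF coefficients (for instance $\omega > 99$, yet $1$ is less than $99$), so a naive transfinite induction on $\uth(p)$ does not close. The clean way around this, which I would take, is an auxiliary induction on the lexicographic pair $(k, n_k)$ in the CNF of $\uth(p)$, whereby each \th-forking extension either strictly reduces the number $k$ of CNF terms (when the minimal $\omega^{\alpha_k}$ block is consumed) or decreases $n_k$ by one. A cleaner alternative, which the paper is now equipped to run, is first to observe by the descending-chain argument that $p$ has rudimentarily finite \th-weight (an infinite pre-weight witness would produce an infinite strictly descending chain of $\uth$-ranks), then apply Theorem \ref{thm:finiteweight} to obtain \th-equidominance of $p$ with a \th-independent free product of $\thw(p)$ \th-weight-1 types; \th-equidominance preserves $\uth$-rank, and the natural sum of the ranks of the weight-1 pieces recovers the CNF of $\uth(p)$ term-for-term, yielding $\thw(p) \le \sum n_i$ directly.
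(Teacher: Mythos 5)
Both routes you sketch contain a genuine gap, and it is the same gap you yourself flag: the ordinal arithmetic. In the main route, the proposed induction measure is simply false. It is not true that a \th-forking extension ``either strictly reduces the number $k$ of CNF terms or decreases $n_k$ by one'': a type of $\uth$-rank $\omega^2$ (so $k=1$, $n_1=1$) has forking extensions of rank $\omega\cdot 5$, for which $k$ is still $1$ and the last coefficient has jumped from $1$ to $5$; the lexicographic pair $(k,n_k)$ \emph{increases}, as does the coefficient sum. So the auxiliary induction does not close, and even if each step did behave as you claim, the number of steps measured by $(k,n_k)$ would not be bounded by $\sum n_i$ (reducing $k$ can create a new last coefficient of arbitrary size). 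The paper's own proof is a verbatim import of Wagner's Theorem 5.2.5 with Fact \ref{lascar} in place of the Lascar inequalities for simple theories, and that argument cannot be run by monitoring the Cantor normal form of $\uth(a/Bb_0\ldots b_j)$ alone: at each step one must use \emph{both} directions of Fact \ref{lascar} for the pair $(a,b_j)$ over $Bb_{<j}$, together with the strict drop $\uth(b_j/Bb_{<j}a)<\uth(b_j/B)$, to pin down the exponent $\alpha$ at which the rank of $a$ falls and to show the loss is matched coefficient-by-coefficient at that level on the $b_j$ side. Your write-up assembles the right ingredients (the paired inequalities, the drops by symmetry, and implicitly $a\nthind_{Bb_{<j}}b_j$), but the one step that actually closes the induction is the one stated incorrectly.

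The alternative route does not repair this. The claim that \th-equidominance preserves $\uth$-rank, and that the ranks of the weight-one pieces produced by Theorem \ref{thm:finiteweight} natural-sum to the CNF of $\uth(p)$ term-for-term, is unsupported and false in general: the $b_i$ there are obtained from Lemma \ref{weight1} as abstract weight-one types with no rank control whatsoever, and domination (being a statement about forking with external parameters) imposes no rank constraints --- by Observation \ref{obs:weight1-domination}, any two non-weakly-orthogonal weight-one types over the same set dominate each other regardless of their ranks. Even granting rank preservation, Fact \ref{lascar} only squeezes $\uth(b_1\ldots b_n/B)$ between the ordinary and the natural sum of the $\uth(b_i/B)$, and the coefficient sum of an ordinary sum of $n$ nonzero ordinals can be smaller than $n$ (e.g. $1+\omega=\omega$), so the final arithmetic would still not yield $n\le\sum n_i$. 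What is correct and worth keeping from your proposal is the observation that well-foundedness of $\uth$-rank gives rudimentarily finite \th-weight; but the quantitative bound $\thw(p)\le\sum n_i$ genuinely requires Wagner's level-by-level bookkeeping.
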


\begin{proof}
This is word by word the same proof as Theorem 5.2.5 in
\cite{wagnerbook} using the \th-forking version of Lascar's
inequalities (Fact \ref{lascar}).
\end{proof}

As an easy corollary we obtain the following theorem which
strengthens Theorem \ref{theorem:decomposition} in the super-rosy
context.

\begin{theorem}\label{theorem:decomposition2}
 The following hold.

\begin{itemize}
\item Any super-rosy type has finite \th-weight.

\item Let $T$ be super-rosy,
  $p \in \tS(A)$. Then $\thw(p) = n$ for some $n<\om$ and there exists a set
  $B$, $A \subseteq B$, and $b_1, \ldots, b_n$ \th-independent over $B$
  such that $p \Join \tp(b_1\ldots b_n/B)$ and $\tp(b_i/B)$ are \th-regular.
\end{itemize}
\end{theorem}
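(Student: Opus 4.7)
The plan is to invoke Proposition \ref{pro:superrosyfiniteweight} for the first bullet, and to iterate the Exchange Lemma using Proposition \ref{prp:regexist} for the second. For the first bullet, in a super-rosy theory every type $p$ has ordinal-valued $\uth$-rank with Cantor normal form $\sum_{i=1}^k \omega^{\alpha_i}n_i$, and Proposition \ref{pro:superrosyfiniteweight} immediately gives $\thw(p) \le \sum_{i=1}^k n_i < \omega$.

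For the second bullet, the first bullet gives $\thw(p)=n<\omega$, so Theorem \ref{thm:finiteweight} produces $B_0 \supseteq A$, a realization $a$ of an appropriate non-\th-forking extension of $p$, and a tuple $b_1^0,\ldots,b_n^0$ that is \th-independent over $B_0$, has each $\thw(b_i^0/B_0)=1$, and satisfies $a \bowtie^\tho_{B_0} b_1^0\cdots b_n^0$. I would then perform $n$ successive applications of the Exchange Lemma \ref{lemma:exchange}, swapping each coordinate in turn for a realization of a \th-regular type. At step $i$, after rearranging so that $b_i^{i-1}$ plays the role of the last coordinate, Proposition \ref{prp:regexist} yields a \th-regular $q_i$ over some $B' \supseteq B_{i-1}$ that is non-\th-orthogonal to $\tp(b_i^{i-1}/B_{i-1})$; the first bullet combined with Corollary \ref{weight4} forces $\thw(q_i)=1$, and the ``moreover'' clause of Lemma \ref{lemma:exchange} produces an enlarged base $B_i$ and $b_i^i \models q_i$ such that the updated tuple $(b_1^i,\ldots,b_n^i)$ is again a \th-weight-1 witness for $\thw(p)\ge n$ (in particular \th-independent over $B_i$) and is still equidominant with $a$ over $B_i$.

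The main delicate point is verifying that the inductive invariants survive each step. First, for $j\ne i$ the coordinate $b_j^{i-1}$ must retain \th-weight $1$ over the enlarged base $B_i$; this follows from Lemma \ref{weight} together with the fact that the construction in the Exchange Lemma arranges $b_j^{i-1}\thind_{B_{i-1}} B_i$. Second, for $j<i$ the previously inserted realizations of \th-regular types must remain realizations of \th-regular types over $B_i$---this uses only that a non-\th-forking extension of a \th-regular type is \th-regular, which is immediate from the definition and transitivity of \th-forking. After $n$ steps, setting $B := B_n$ and $b_i := b_i^n$ delivers the desired conclusion: the $b_i$ are \th-independent over $B$ by the maintained witness property, each $\tp(b_i/B)$ is \th-regular by construction, and the equidominance $p \Join \tp(b_1\cdots b_n/B)$ is inherited from $a \bowtie^\tho_B b_1\cdots b_n$ via iterated application of the ``moreover'' clause of the Exchange Lemma.
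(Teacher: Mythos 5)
Your proposal is correct and follows essentially the same route as the paper: the first bullet from Proposition \ref{pro:superrosyfiniteweight}, and the second by combining the weight-one decomposition of Theorem \ref{theorem:decomposition} (i.e.\ Theorem \ref{thm:finiteweight}) with Proposition \ref{prp:regexist}, Corollary \ref{cor:reg1}, and iterated use of the Exchange Lemma. The paper leaves the iteration implicit; your verification of the inductive invariants (preservation of weight one over the enlarged base via Lemma \ref{weight}, and regularity of non-\th-forking extensions of \th-regular types) correctly fills in exactly the details the paper omits.
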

\begin{proof}
The first item follows immediately from Proposition
\ref{pro:superrosyfiniteweight}.

To prove the second item, notice first that $\thw(p)$ is finite by
Proposition
    \ref{pro:superrosyfiniteweight}.
    Now apply Theorem \ref{theorem:decomposition} combined with
  existence of \th-regular types (Proposition \ref{prp:regexist}) and the Exchange Lemma
  (Lemma \ref{lemma:exchange}), recalling that by
  Corollary \ref{cor:reg1} \th-regular types have \th-weight 1.
\end{proof}

\subsection{Types of finite $\uth$-rank.}

The following proposition is a remarkably interesting result with
many consequences in theories of finite $\uth$-rank.

\begin{proposition}\label{2} The following hold.
\begin{itemize}
\item Let $p(x)=tp(b/A)$ be any type such that $\uth(p)=\alpha+1$.
Then there are tuples $a,e$ such that $\uth(tp(b/Aa))=\alpha$,
$b\thind_A e$, $b\thind_{Aa} e$, $tp(b/Aa)$ strongly divides over
$Ae$ and $\uth(tp(a/Ae))=1$.

\item If $p(x)=tp(b/A)$ is any type of \th-rank $\alpha+1$ then
there is a non-\th-forking extension $tp(b/Ae)$ of $p$ and a tuple
$a\in \acl(Abe)$ such that $tp(a/Ae)$ is minimal.
\end{itemize}
\end{proposition}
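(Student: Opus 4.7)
The plan is to derive both parts by applying Observation \ref{thorn dividing} twice and then pinning down ranks via Lascar's inequalities (Fact \ref{lascar}). The single crucial point is the ``In particular'' clause of Observation \ref{thorn dividing}(ii): once $tp(b/Aa)$ has ordinal $\uth$-rank, strong dividing automatically produces a \emph{strict} drop $\uth(tp(b/Aae)) < \uth(tp(b/Ae))$, which will deliver $b \thind_A e$ for free.

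For (i), I would first pick $a_0$ with $\uth(tp(b/Aa_0))=\alpha$, so $tp(b/Aa_0)$ \th-forks over $A$. Observation \ref{thorn dividing}(i) produces $a_1$ with $b\thind_{Aa_0}a_1$ and $tp(b/Aa_1)$ \th-dividing over $A$; setting $a:=a_0a_1$, non-\th-forking preserves rank so $\uth(tp(b/Aa))=\alpha$, and the \th-dividing formula from $tp(b/Aa_1)$ already lies in $tp(b/Aa)$. Applying Observation \ref{thorn dividing}(ii) to $tp(b/Aa)$ then yields $e$ with $b\thind_{Aa}e$ and some $\phi(x,a)\in tp(b/Aa)$ strongly dividing over $Ae$, handling two of the required clauses at once. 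The clause $b\thind_A e$ is then automatic: the ``In particular'' part of Observation \ref{thorn dividing}(ii) gives $\uth(tp(b/Aae))<\uth(tp(b/Ae))$; since $b\thind_{Aa}e$ forces the left side to equal $\alpha$ while monotonicity forces $\uth(tp(b/Ae))\le\alpha+1$, the only possibility is $\uth(tp(b/Ae))=\alpha+1$.

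For the last clause $\uth(tp(a/Ae))=1$, I would use Remark \ref{strong dividing}(i) applied to $b\models\phi(x,a)$ to obtain $a\in\acl(Aeb)$, so $\uth(tp(a/Aeb))=0$. Feeding this together with $\uth(tp(b/Ae))=\alpha+1$ and $\uth(tp(b/Aea))=\alpha$ into Lascar's inequalities of Fact \ref{lascar} over the base $Ae$ first pins $\uth(tp(ab/Ae))=\alpha+1$, and then
\[
\alpha + \uth(tp(a/Ae)) \;\le\; \alpha+1 \;\le\; \alpha \oplus \uth(tp(a/Ae)).
\]
Ordinal cancellation on the left gives $\uth(tp(a/Ae))\le1$, and the right inequality rules out $\uth(tp(a/Ae))=0$ (which would give $\alpha+1\le\alpha$); hence $\uth(tp(a/Ae))=1$.

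Part (ii) is then immediate from (i): the constructed $a,e$ already satisfy $b\thind_A e$ and $a\in\acl(Abe)$, while $\uth(tp(a/Ae))=1$ makes $tp(a/Ae)$ minimal in the super-rosy setting, since any proper \th-forking extension of a rank-$1$ type drops the rank to $0$ and so is algebraic. The obstacle I anticipated \emph{a priori} was arranging $b\thind_A e$ while preserving the strong-dividing witness --- naively this would demand delicate extension/automorphism manoeuvres to produce $e\equiv_{Aa}e'$ with $e\thind_A b$. The ``In particular'' half of Observation \ref{thorn dividing}(ii) short-circuits this entirely in the ordinal-rank regime, which is why the argument is as short as it is.
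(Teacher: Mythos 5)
Your proof is correct and follows essentially the same route as the paper's: choose $a$ so that $tp(b/Aa)$ is a \th-dividing extension of rank $\alpha$, apply Observation \ref{thorn dividing}(ii) to obtain $e$ with $b\thind_{Aa}e$ and strong dividing, use the strict rank drop to force $\uth(tp(b/Ae))=\alpha+1$ and hence $b\thind_A e$, and then extract $\uth(tp(a/Ae))=1$ from Lascar's inequalities via $a\in\acl(Abe)$. The only cosmetic difference is that you make the first step explicit through Observation \ref{thorn dividing}(i) with $a=a_0a_1$, where the paper simply asserts that such an $a$ can be chosen.
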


\begin{proof} The second item follows immediately from the first one.
To prove the first item, notice that we can choose $a$ so that
$p(x,a)$ is a \th-dividing extension of $p(x)$ and
$\uth(tp(b/Aa))=\alpha$. By Observation \ref{thorn
dividing}\ref{1} there is some $e$ such that $b\thind_{Aa} e$ and
$tp(b/Aae)$ strongly divides over $Ae$ so $a\in acl(Abe)$. Note
that
$$\al=\uth(tp(b/Aa)) = \uth(tp(b/Aae)) < \uth(tp(b/Ae)) \le \uth(tp(b/A)) = \al+1$$

\noindent hence $\uth(tp(b/Ae)) = \uth(tp(b/A)) = \al+1$; in
particular,  $b\thind_A e$.  By Lascar's inequalities we know that
$$\uth(tp(ba/Ae))=\uth(tp(b/Ae)) + \uth(tp(a/Abe))=\alpha+1+0 =
\alpha+1$$

\noindent and
%by Lascar's inequalities
\[
\uth(b/Aae)+ \uth(tp(a/Ae))\leq \uth(tp(ba/Ae))\leq
\uth(b/Aae)\oplus \uth(tp(a/Ae)).
\]

\noindent So
\[
\alpha+ \uth(tp(a/Ae))\leq \alpha +1 \leq \alpha \oplus
\uth(tp(a/Ae)).
\]
and the result follows.\end{proof}

Notice that Proposition \ref{2} provides the inductive step, in
theories of finite $\uth$-rank, for any property which is closed
under non-\th-forking restrictions and coordinatized types (in the
sense that if a type $p$ is coordinatized by types having the
property, then $p$ must have the property). This has nice
consequences (it was strongly used, for example, in \cite{HaOn2}).
Some of the more direct consequences include the following.

\begin{corollary}\label{nice extensions 2}
Let $p(x)$ be any type of finite $\uth$-rank. Then $p(x)$ is
non-\th-orthogonal to a \th-minimal type.
\end{corollary}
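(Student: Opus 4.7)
The plan is to apply Proposition \ref{2}(ii) essentially directly. Since $p$ has finite $\uth$-rank, we may assume $p = \tp(b/A)$ is non-algebraic, so $\uth(p) = n$ for some $n \ge 1$; write $n = (n-1) + 1$ and apply Proposition \ref{2}(ii) to produce a non-$\tho$-forking extension $\tp(b/Ae)$ of $p$ together with a tuple $a \in \acl(Abe)$ such that $\tp(a/Ae)$ is $\tho$-minimal.

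It then suffices to verify that $\tp(b/Ae)$ and $\tp(a/Ae)$, viewed as non-$\tho$-forking extensions to the common domain $Ae$ (the second being trivially such an extension of itself), are not weakly $\tho$-orthogonal. Since $\tp(a/Ae)$ is $\tho$-minimal, in particular non-algebraic, we have $a \notin \acl(Ae)$. Combined with $a \in \acl(Abe)$ this forces $a \nthind_{Ae} b$: indeed, by Remark \ref{strong dividing}(iii), $\tp(b/Aea)$ strongly divides over $Ae$, hence $\tho$-forks over $Ae$, giving the required dependence. Thus $p \not\perp^\tho \tp(a/Ae)$, and $\tp(a/Ae)$ is the desired $\tho$-minimal type.

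There is no real obstacle here; the content is entirely packaged in Proposition \ref{2}. The only point deserving care is the passage from $a \in \acl(Abe) \setminus \acl(Ae)$ to $\tho$-dependence of $a$ and $b$ over $Ae$, which is precisely the observation that strong dividing (and hence $\tho$-forking) is detected by loss of algebraicity, as recorded in Remark \ref{strong dividing}.
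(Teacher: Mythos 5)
Your proof is correct and follows essentially the same route as the paper: apply Proposition \ref{2}(ii) to get a non-\th-forking extension $\tp(b/Ae)$ and a \th-minimal $\tp(a/Ae)$ with $a\in\acl(Abe)$, then observe non-weak-\th-orthogonality. The paper dismisses the last step with ``clearly''; your justification via $a\in\acl(Abe)\setminus\acl(Ae)$ and Remark \ref{strong dividing}(iii) is exactly the intended detail.
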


\begin{proof}
By Proposition \ref{2} given $p(x)=tp(b/A)$ of finite $\uth$-rank,
there is a non-\th-forking extension $tp(b/Ae)$ and an element
$a\in \acl(Abe)$ such that $tp(a/Ae)$ is \th-minimal. Clearly
$tp(b/Ae)$ and $tp(a/Ae)$ are non-\th-weakly orthogonal.
\end{proof}

\begin{corollary}
Let $p \in \tS(A)$ be a type of finite $\uth$ rank. Then $\thw(p)
= n$ for some finite $n$ and there is a set $B$, $A \subseteq B$,
and $b_1, \ldots, b_n$ independent over $B$ such that $p \Join
\tp(b_1\ldots b_n/B)$ and $\uth(tp(b_i/B)) = 1$.
\end{corollary}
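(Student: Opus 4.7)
The plan is to refine the decomposition of Theorem \ref{theorem:decomposition2} by replacing each \th-regular coordinate with a \th-minimal one, using the Exchange Lemma (Lemma \ref{lemma:exchange}) together with Corollary \ref{nice extensions 2}. Since $p$ has finite $\uth$-rank, Proposition \ref{pro:superrosyfiniteweight} yields $\thw(p) = n < \omega$, and Theorem \ref{theorem:decomposition2} provides a set $B \supseteq A$, a realization $a \models p$ with $a \thind_A B$, and tuples $b_1, \ldots, b_n$ which are \th-independent over $B$, satisfy $a \bowtie^\tho_B b_1 \ldots b_n$, and have each $\tp(b_i/B)$ \th-regular (hence of \th-weight $1$ by Corollary \ref{weight4}). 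This provides a nice witness in the sense needed for the Exchange Lemma.

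The main subtle step is verifying that each $\tp(b_i/B)$ has finite $\uth$-rank. Since $a \thind_A B$, we have $\uth(\tp(a/B)) = \uth(p) < \omega$; the equidominance $a \bowtie^\tho_B b_1 \ldots b_n$ combined with Lascar's inequalities (Fact \ref{lascar}) applied to $\tp(a b_1 \ldots b_n/B)$ in both directions forces $\uth(\tp(b_1 \ldots b_n/B))$ to be finite, and \th-independence of the $b_i$'s over $B$ then yields $\uth(\tp(b_i/B)) < \omega$ for each $i$. Having established finite rank, Corollary \ref{nice extensions 2} produces for each $i$ a \th-minimal type $q_i$ non-\th-orthogonal to $\tp(b_i/B)$, and $q_i$ automatically has \th-weight $1$. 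I expect this step to require the most bookkeeping, since in general domination need not preserve ordinal rank, and one must exploit the specific product structure of the right-hand side together with both directions of the equidominance.

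Finally, I would apply the Exchange Lemma iteratively. At stage $i$, using the symmetric form of Lemma \ref{lemma:exchange}, replace $b_i$ by a realization $b_i' \models q_i$ and extend the base to some $B^{(i)} \supseteq B^{(i-1)}$; the ``moreover'' clause maintains $a \bowtie^\tho_{B^{(i)}} b_1' \ldots b_i' b_{i+1} \ldots b_n$. Since non-\th-forking extensions preserve \th-weight (Lemma \ref{weight}), \th-regularity, and non-\th-orthogonality, the remaining $b_j$ ($j > i$) continue to satisfy the hypotheses needed at the next stage. After $n$ steps we reach $B' = B^{(n)}$ and $b_1', \ldots, b_n'$ \th-independent over $B'$, each of $\uth$-rank $1$, with $p \Join \tp(b_1' \ldots b_n'/B')$, as required.
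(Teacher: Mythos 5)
Your overall skeleton --- finite \th-weight from Proposition \ref{pro:superrosyfiniteweight}, a weight-one decomposition, then the Exchange Lemma plus Corollary \ref{nice extensions 2} to trade each coordinate for a \th-minimal one --- is exactly the route the paper takes (its proof is little more than a citation of these three results). But the one step you yourself single out as delicate is where your argument breaks. You claim that $a \bowtie^\tho_B b_1\ldots b_n$ together with Lascar's inequalities forces $\uth(\tp(b_1\ldots b_n/B))<\om$. Applying Fact \ref{lascar} to $\tp(ab_1\ldots b_n/B)$ in the two possible orders only tells you that $\uth(\tp(ab_1\ldots b_n/B))$ is finite if and only if $\uth(\tp(b_1\ldots b_n/Ba))$ is finite (given that $\uth(\tp(a/B))$ is), and that this would in turn bound $\uth(\tp(b_1\ldots b_n/B))$. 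Equidominance gives you no control whatsoever over $\uth(\tp(b_1\ldots b_n/Ba))$: domination is a statement about which sets fork with which elements, it preserves weight (Observation \ref{obs:domweight}) but not ordinal rank, and nothing in the decomposition theorem guarantees that the witnesses $b_i$ have ordinal-valued, let alone finite, $\uth$-rank. So you cannot legitimately apply Corollary \ref{nice extensions 2} to the types $\tp(b_i/B)$. (A secondary point: you start from Theorem \ref{theorem:decomposition2}, whose proof via Proposition \ref{prp:regexist} requires $T$ to be super-rosy, while the corollary only assumes $p$ has finite rank; Theorem \ref{theorem:decomposition}, which needs only $\thw(p)<\om$, is the correct starting point.)

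The repair is to apply Corollary \ref{nice extensions 2} (equivalently Proposition \ref{2}) to non-\th-forking extensions of $p$ itself, which certainly have finite $\uth$-rank, rather than to the coordinates. This produces a \th-minimal (hence \th-weight-one) type $q$ over some $Be$ with $a\thind_B e$ and a realization $d$ of $q$ with $d\nthind_{Be} a$; choosing $e$ so that also $b_1\ldots b_n\thind_B e$, domination transfers to $Be$ by Observation \ref{obs:domextend}, so $a\lhd^\tho_{Be} b_1\ldots b_n$ gives $d\nthind_{Be} b_1\ldots b_n$. Taking the least $i$ with $d\nthind_{Be} b_{\le i}$ and using the $Be$-\th-independence of the $b_j$'s, one sees that $q$ is non-\th-orthogonal to $\tp(b_i/Be)$ --- which is precisely the hypothesis the Exchange Lemma needs. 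Iterating (now over $\tp(a/Be b_i')$, whose $\uth$-rank has strictly dropped) handles the remaining coordinates. With this modification the rest of your iteration, including the preservation of equidominance via the ``moreover'' clause of Lemma \ref{lemma:exchange}, goes through.
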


\begin{proof}
The fact that a type of finite $\uth$-rank has finite \th-weight
follows from Proposition \ref{pro:superrosyfiniteweight}. The rest
of the assertions follow from Theorem \ref{theorem:decomposition}
using the Exchange Lemma and Corollary \ref{nice extensions 2}.
\end{proof}

\bigskip

We will conclude this section by making some remarks about
Proposition \ref{2}.

At first glance, it would appear that one could coordinatize a
non-\th-forking extension of any type of finite $\uth$-rank by
repeatedly applying the Proposition. However, this would prove a
coordinatization theorem in the stable case, which is known not to
be true, as the following example shows.

\begin{example}
Let $\mathcal{L}:=\{L, E\}$ be such that $L$ is a ternary relation
and $E$ a binary relation and let $T$ be the theory that states that
$E$ is an equivalence relation with infinitely many infinite classes
and such that $L$ defines an affine space on each $E$-class (so
$L(x,y,z)\Rightarrow E(x,y)\wedge E(x,z)\wedge E(y,z)$). A natural
model $M$ of this theory is a sheaf of affine planes indexed by a
line, where $E(x,y)$ if and only if $x$ and $y$ are in the same
plane and $L(x,y,z)$ happens whenever $x,y,z$ are collinear points
in the same $E$-class.

Let $g$ be $\emptyset$-generic $E$-class in $M$ and $a$ and
$g$-generic point in $g$. The conclusion of Proposition \ref{2}
applied to the type $tp(a/g)$ can be seen in the following way: Let
$b$ be any point in $g$ such that $a\thind_g b$ and let $l$ be the
line through $a$ and $b$ (so that $l:=\{x\mid L(x,a,b)\}$). Then
$tp(a/gb)$ is a non-forking extension of $tp(a/g)$, $L\in \acl(ab)$
and $tp(l/ge)$ is a \th-minimal type.

Going back to coordinatization, if we try to coordinatize
$tp(a/\emptyset)$ the first step is $tp(a/g)$ and $tp(g/\emptyset)$.
The next step, however, would be to coordinatize the non-\th-forking
extension $tp(a/gb)$ of $tp(a/g)$. But $b\nthind_\emptyset a$ (and
the reader can check that this is true for every possible $b$ we can
choose) so this does not help at all in trying to coordinatize
$tp(a/\emptyset)$, nor any non-\th-forking extension of it. In fact,
it is not hard to check that $tp(a/\emptyset)$ cannot be
coordinatized in terms of \th-minimal types.
\end{example}

The example above shows a stable (even super-stable) example where
no coordinatization is possible, and it shows the limitations of
Proposition \ref{2} to get a full coordinatization for super-rosy
theories. The main issue there is that we have no control over the
parameter $e$ we need to get from \th-dividing to strong dividing.
In the affine space, for example, this $e$ cannot be overlooked
nor can we have any control as to where it comes from.

This has two main consequences. On the one hand, once we try to
use Proposition \ref{2} inductively and coordinatize $tp(b/Aae)$
then the $f$ we need can be taken to be such that $b\thind_{Aa} f$
but there is no hope that we can find it such that $b\thind_A f$.
The second consequence is that we can only coordinatize a
non-\th-forking extension of types of rank $\alpha +i$ in types of
rank $\alpha$ and rank $i$ when $i=1$, but we cannot do the same
for $i>1$ without further assumptions.

It seems that this lack of control over the choice of $e$ could be
somewhat solved if we had extra assumptions (definable choice
seems to be the right notion), but even this assumption seems to
not be enough to get any coordinatization-like result beyond
possibly the finite $\uth$-rank case. However, coordinatization is
such a useful tool, and the connections with definable choice are
so unclear, that even results assuming finite $\uth$-rank would be
quite interesting.

\section{Indiscernibles in dependent
theories, strong dependence and weight.}\label{constructing}

Theorem \ref{finite weight} states that if a rosy theory is
strongly dependent then every type has rudimentary finite (and
hence finite) \th-weight. It is natural to ask whether an
analogous notion of weight exists in a general setting (for
example, an arbitrary dependent theory). It has been established
that non-forking plays an important role in the study of dependent
theories. One might wonder, therefore, whether there a notion of
weight based on non-forking which behaves well in dependent
theories. One desired property of such a notion would be: $T$ is
strongly dependent if and only if every type has rudimentarily
finite (and possibly finite) weight.

A possible notion of weight satisfying the property mentioned above
was studied by the authors in \cite{OnUs}. One drawback of that
notion is that it ``measures'' weight of a type with respect to
Morley sequences (and not elements). Although by \cite{Us2} we know
that a Morley sequence is precisely what is needed in a dependent
theory in order to determine a global invariant type (so the
definitions in \cite{OnUs} are quite natural), we were (and still
are) curious whether the definition of weight using Morley sequences
is equivalent to the classical notion.

The answers to these questions are still unclear and they have
motivated further research, such as \cite{Us2}, \cite{KaUs}. We have
discovered that in order to make sense of a notion of weight based
on non-forking, one needs to understand under which conditions there
exist mutually indiscernible sequences starting with given elements
(and to what extent one can ``determine'' the types of those
sequences). Let us explain more precisely what we mean.

Suppose one defined ``weight'' as usual (like in stable theories;
that is, take the definitions in section 2 and replace \th-forking
by forking). Recall that one ingredient of the proof of Theorem
\ref{finite weight} was showing that given a \th-independent set of
elements (tuples), there exist mutually indiscernible sequences
starting with those elements. A natural question whether an
analogous result holds for non-forking is answered positively by
Theorem \ref{mutually ind} below. This is, unfortunately, not enough
in order to prove a result similar to Theorem \ref{finite weight}:
since we do not have any control over the types of those Morley
sequences, it is not clear why they should exemplify dividing
(recall that in the case of \th-forking life was easier, as we could
work with strong dividing, which is exemplified by any infinite
indiscernibly sequence in the type). Of course, if $T$ were stable
(or even simple), there would be no problem, since any Morley
sequence would exemplify dividing.

The discussion above leads to the following two questions:

\begin{qst}\label{qst1}
    To which extent can we ``control the types'' of the
    mutually indiscernible sequences constructed in Theorem
    \ref{mutually ind}? More precisely: what must we assume about
    the set $\lset{b}{i}{\al}$ such that for every indiscernible
    sequences $I_i$ starting with $b_i$ respectively, there are
    indiscernible sequences $I'_i \equiv_{b_i} I_i$ such that $I'_i$
    is indiscernible over $I'_{\neq i}$?
\end{qst}

\begin{qst}\label{qst2}
    To which extent do Morley sequences exemplify dividing
    in a dependent theory?
\end{qst}

It was shown in \cite{Us} that if the types realized by the $b_i$'s
in Question \ref{qst1} are generically stable, then it is enough to
assume non-forking independence. This was the main ingredient in the
proof of the main theorem of Section 8 there: in a strongly
dependent theory, every type has a rudimentarily finite generically
stable weight (below we give a much easier proof of this result
based on Theorem \ref{dividing with Morley} - see Theorem
\ref{theorem:finite gen stable weight}). We could not establish (and
in fact, it is still open) whether assuming non-forking independence
is enough given arbitrary $b_i$ with or without assuming the theory
is dependent, but some progress in this direction has been made, and
the results appear in the second half of this section. It has become
clear in subsequent works (\cite{Us2}, \cite{KaUs}) that Question
\ref{qst1} is related to so-called ``strict non-forking'' defined by
Shelah in \cite{Sh783}.

Concerning Question \ref{qst2}, we prove that, although it is not
the case that \emph{every} Morley sequence exemplifies dividing,
there normally \emph{are} such sequences. This fact has several
consequences, some of which we investigate.

\bigskip In this section we are going to assume that \emph{$T$ is
dependent} unless said otherwise.

\medskip
We will work with classical notions of dividing, forking and
splitting. We assume that the reader is familiar with all of these.
Recall that $a \ind_AB$ stands for ``$\tp(a/AB)$ does not fork over
$A$''.
%See for example section 1 of \cite{Us}
%for the definitions and a quick summary of basic properties

.

\subsection{Existence of Morley and mutually indiscernible sequences.}

Let us start with the following easy lemma.

\begin{lemma}\label{lem:ind exist} (No need of dependence).
$\left.\right.$
\begin{enumerate}
\item
    Assume  $A \subseteq C$ and $a\ind_AC$ (that is, $\tp(a/C)$ does not fork over $A$).
    Then there exists an $C$-indiscernible sequence
    $I = \lseq{a}{i}{\om}$ with $a_0 = a$. Such $I$ can be chosen to be a
    Morley sequence in $\tp(a/C)$ based on $A$.
\item
    If in addition $C \subseteq D$, then there is $D' \equiv_CD$
    such that $I$ is a Morley sequence in $\tp(a/D')$ based on $A$.
\end{enumerate}
\end{lemma}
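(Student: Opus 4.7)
The plan is to give the standard construction of a Morley sequence via extension plus Erd\H{o}s--Rado for part (i), and then to obtain part (ii) by ``pre-extending'' the base: replace $D$ by a $C$-conjugate $D'$ over which $a$ is already non-forking independent from $A$, and then apply (i) with $D'$ in place of $C$.

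For part (i), I would proceed as follows. Let $\mu=\mu(|C|+|T|)$ as in Fact~\ref{Erdos Rado}. By transfinite induction on $i<\mu$ I would build a sequence $\langle a'_i : i<\mu\rangle$ with $a'_0=a$, $a'_i\equiv_C a$, and $a'_i\ind_A Ca'_{<i}$. The successor step uses the extension property for non-forking: given $a'_{<i}$, the partial type
$$ \tp(a/C) \cup \{\,\neg\psi(x,d) : d\in Ca'_{<i},\ \psi(x,d)\text{ forks over }A\,\} $$
is consistent (else by compactness a single formula $\phi(x,c)\in\tp(a/C)$ would imply a finite disjunction of formulas forking over $A$, forcing $\phi(x,c)$ itself to fork over $A$ and contradicting $a\ind_AC$); realize such an extension by $a'_i$. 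Now apply Fact~\ref{Erdos Rado} to this long sequence to extract a $C$-indiscernible $\omega$-sequence $I^*=\langle a^*_j:j<\omega\rangle$ whose finite types over $C$ appear among finite subtuples of $\langle a'_i:i<\mu\rangle$. Since whether $a^*_j\ind_A Ca^*_{<j}$ holds is determined by $\tp(a^*_j a^*_{<j}/C)$, and that type is realized in the original sequence where non-forking over $A$ was arranged, $I^*$ is a Morley sequence in $\tp(a/C)$ based on $A$. Finally, $a^*_0\equiv_C a$, so an automorphism over $C$ moves $I^*$ to some $I$ with $a_0=a$.

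For part (ii), the key observation is that we may first replace $D$ by a $C$-conjugate over which $a$ is non-forking-independent from $A$. By extension applied to $a\ind_AC$ and $C\subseteq D$, there exists $a''\equiv_Ca$ with $a''\ind_AD$. Applying an automorphism over $C$ sending $a''$ to $a$ moves $D$ to some $D'\equiv_CD$ with $a\ind_AD'$. Now apply part (i) with $D'$ in place of $C$: this yields a sequence $I$ that is $D'$-indiscernible, starts with $a$, and is a Morley sequence in $\tp(a/D')$ based on $A$. Since $C\subseteq D'$, this same $I$ is automatically $C$-indiscernible and Morley over $C$ based on $A$, so it simultaneously witnesses (i).

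I do not expect any real obstacle. The only point requiring mild care is verifying that extension for non-forking is available without any hypothesis on $T$; as sketched above this is a direct compactness argument and needs neither simplicity nor dependence, which is why the lemma is annotated ``no need of dependence.'' The Erd\H{o}s--Rado step is standard and automatically transfers the non-forking property because non-forking of $a^*_j$ from $Ca^*_{<j}$ over $A$ is a property of the type $\tp(a^*_ja^*_{<j}/C)$, which is pinned down by the choice of $\omega$-type extracted from the long sequence.
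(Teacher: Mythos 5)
Your proposal is correct and follows essentially the same route as the paper: build a long non-forking sequence in $\tp(a/C)$ by the extension property, extract a $C$-indiscernible $\omega$-sequence via Fact~\ref{Erdos Rado} (noting that non-forking over $A$ is preserved because it is a property of the finite types over $C$), and move it by an automorphism over $C$ to start with $a$; for (ii), pre-conjugate $D$ to $D'$ with $a\ind_A D'$ and apply (i) over $D'$. The only addition is your explicit compactness justification of extension for non-forking in an arbitrary theory, which the paper simply invokes as ``existence of non-forking extensions.''
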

\begin{proof}
\begin{enumerate}
\item
    Let $\mu$ be ``big enough'' (that is, so that Fact \ref{Erdos Rado} can be applied for $\lam = |C|+|T|$). Using existence of
    non-forking extensions, we can construct
    a sequence $I' = \lseq{a'}{i}{\mu}$ in $\tp(a/AC)$ based on $A$ such that
    \begin{itemize}
    \item
        $a'_i \equiv_{Ca_{<i}}a_j$ for every $j>i$
    \item
        $a'_i \ind_ACa_{<i}$;
    \end{itemize}
    (note
    that if e.g. $T$ is dependent and $A = \bdd(A)$, we are done,
    since this is also a non-splitting sequence, hence
    indiscernible.)

    By Fact \ref{Erdos Rado} there is an \om-sequence $I =
    \lseq{a}{i}{\om}$ indiscernible over $C$ such that every
    $n$-type of $I$ over $C$ ``appears'' in $I'$. In particular,
    this sequence is still based on $A$ because forking
    $a_i \nind_A Ca_{<i}$ is a property of the type $\tp(a_{\le
    i}/C) $. Since $a_0 \equiv_Ca'_i$ for some $i<\mu$ and
    $a'_i\equiv_Ca'_0 = a$, there is $\sigma \in \Aut(\fC/C)$ taking
    $a_0$ to $a$; by replacing $I$ with the image of $I$ under $\sigma$, (which is still a
    Morley sequence over $C$ based on $A$) we may assume $a_0 =
    a$.

\item
    Since $a \ind_AC$, by existence of non-forking extensions there
    exists $a' \equiv_C a$ such that $a' \ind_AD$. So there is an
    automorphism over $C$ taking $a'$ to $a$ and $D$ to $D'$; now
    apply clause (i).
\end{enumerate}
\end{proof}

Although most properties of non-forking identifying it as an
independence relation is stable or simple theories are generally
false in our contexts, some things can still be said. We will refer
to the fact below as ``transitivity on the left''.

\begin{fact}\label{left forking transitivity}
Let $A, B$ be sets and assume that $I = \lseq{a}{i}{\lam}$ is a
non-forking sequence based on A (that is, $a_i \ind_ABa_{<i}$ for
all $i<\lam$). Then $I\ind_A B$, that is, $\tp(I/AB)$ does not fork
over $A$.
\end{fact}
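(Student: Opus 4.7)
The plan is to prove Fact~\ref{left forking transitivity} by reducing, via finite character of forking and induction, to a pair version of left transitivity---namely, that $c \ind_A B$ and $d \ind_A Bc$ together imply $cd \ind_A B$---which will be established by contradiction using extension.

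By finite character of forking, it suffices to show that $\tp(a_{i_1}\ldots a_{i_n}/AB)$ does not fork over $A$ for every finite increasing subsequence $i_1 < \cdots < i_n$, and I shall prove this by induction on $n$. The case $n=1$ follows from $a_{i_1} \ind_A Ba_{<i_1}$ by monotonicity of non-forking (since $B \subseteq Ba_{<i_1}$). For the inductive step, set $c = a_{i_1}\ldots a_{i_{n-1}}$ and $d = a_{i_n}$: the subsequence $(a_{i_1},\ldots,a_{i_{n-1}})$ satisfies the Fact's hypothesis by termwise monotonicity, so the inductive hypothesis yields $c \ind_A B$; and $a_{i_n} \ind_A Ba_{<i_n}$ combined with $Bc \subseteq Ba_{<i_n}$ yields $d \ind_A Bc$. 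Hence the problem reduces to the pair version.

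For the pair version, I argue by contradiction. If $\tp(cd/AB)$ forks over $A$, then some $\phi(x,y,b) \in \tp(cd/AB)$ satisfies $\phi(x,y,b) \vdash \bigvee_{j<k} \psi_j(x,y,\bar e_j)$ with each $\psi_j(x,y,\bar e_j)$ dividing over $A$. Applying the extension property of non-forking to $d \ind_A Bc$, I find $d' \equiv_{Bc} d$ with $d' \ind_A Bc\bar e_1\ldots\bar e_k$; since $\phi(c,d',b)$ still holds, some $\psi_{j^*}(c,d',\bar e_{j^*})$ holds, giving $\psi_{j^*}(c,y,\bar e_{j^*}) \in \tp(d'/Bc\bar e_{j^*})$. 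It remains to show that $\psi_{j^*}(c,y,\bar e_{j^*})$ (viewed as a formula in $y$ with parameters $c\bar e_{j^*}$) divides over $A$; this will contradict $d' \ind_A Bc\bar e_{j^*}$ and finish the proof.

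The main obstacle is this last dividing claim. I plan to apply Fact~\ref{Erdos Rado} to the augmented sequence $((c,\bar e^i))_i$---where $(\bar e^i)_i$ is a long $A$-indiscernible sequence witnessing dividing of $\psi_{j^*}(x,y,\bar e_{j^*})$---to extract an $A$-indiscernible sequence $((c^\ast,\bar f^i))_i$ whose first coordinate is forced to be constant by the equality-of-first-coordinates formula in the $A$-type of the original. The $k$-inconsistency of $\{\psi_{j^*}(x,y,\bar e^i)\}_i$ passes under the specialization $x \mapsto c$ (any $y$ realizing the specialized family yields $(c,y)$ realizing the original) and transfers along the extraction; the remaining subtlety---realigning the extracted sequence to start at $(c,\bar e_{j^*})$ via an appropriate $A$-automorphism---is handled using dependence of $T$, which provides the needed uniformity among indiscernible witness sequences for a dividing formula.
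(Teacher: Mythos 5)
Your reduction, via finite character and induction, to the two-element statement ``$c \ind_A B$ and $d \ind_A Bc$ imply $cd \ind_A B$'' is fine, but your proof of that statement has a genuine gap, and a telling symptom is that the hypothesis $c \ind_A B$ is never used in it: since the conclusion $cd \ind_A B$ restricts to $c \ind_A B$, that hypothesis is indispensable, so an argument that ignores it cannot be complete. The precise failure is the final claim that $\psi_{j^*}(c,y,\bar e_{j^*})$ divides over $A$. Dividing is \emph{not} preserved under the specialization $x \mapsto c$: for $\psi(x,y,e)$ the formula $x = e$ with $e = c \notin \acl(A)$, $\psi(x,y,e)$ divides over $A$ while $\psi(c,y,e)$ is trivially true. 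Your Erd\"os--Rado extraction does not repair this: the sequence $\langle (c,\bar e^i)\rangle_i$ is not $A$-indiscernible (the $\bar e^i$ are indiscernible over $A$, not over $Ac$), so the extracted sequence merely begins with a realization of $\tp(c\,\bar e^{i_0}/A)$ for some uncontrolled index $i_0$, and this type need not equal $\tp(c\,\bar e_{j^*}/A)$; hence no automorphism over $A$ realigns it onto $(c,\bar e_{j^*})$, and dependence of $T$ supplies no such ``uniformity'' (indeed the Fact holds in arbitrary theories --- the paper gives no proof and simply cites Claim 5.16 of \cite{Sh783}).

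The missing step is exactly where $c \ind_A B$ must enter. Before choosing $d'$, apply the extension property to $\tp(c/AB)$ (followed by an automorphism over $AB$, which preserves the forking witness) to arrange $c \ind_A B\bar e_1\cdots\bar e_k$. Then $\tp(c/A\bar e_{j^*})$ does not divide over $A$, so the $A$-indiscernible sequence witnessing that $\psi_{j^*}(x,y,\bar e_{j^*})$ divides over $A$ may be replaced by an $A\bar e_{j^*}$-conjugate that is indiscernible over $Ac$; pairing each term with the fixed $c$ now genuinely yields an $A$-indiscernible sequence in $\tp(c\,\bar e_{j^*}/A)$ witnessing that $\psi_{j^*}(c,y,\bar e_{j^*})$ divides over $A$, and only at this point does $d' \ind_A Bc\bar e_1\cdots\bar e_k$ produce the contradiction. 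This is essentially the standard proof of left transitivity, which routes everything through the characterization of non-dividing by conjugating indiscernible sequences rather than manipulating forking witnesses directly.
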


\begin{proof}
This is Claim 5.16 in \cite{Sh783}.
\end{proof}

\begin{corollary}\label{cor:indep set}
    Let $\set{A_i\colon i<\lam}$ be a non-forking (independent) set over $A$,
    that is, $A_i \ind_A A_{\neq i}$ for all $i$. Then for every
    $W,U \subseteq \lam$ disjoint we have $A_{\in W}\ind_AA_{\in
    U}$.
\end{corollary}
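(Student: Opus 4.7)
The plan is to reduce the corollary directly to Fact \ref{left forking transitivity} (transitivity on the left). The key observation is that after fixing an enumeration of $W$, the family $\langle A_{w} \rangle_{w \in W}$ becomes a non-forking sequence over $A$ not just over itself but over the larger base $A \cup A_{\in U}$, which is exactly the hypothesis needed to invoke the Fact with $B = A_{\in U}$.

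Concretely, I would fix disjoint $W, U \subseteq \lambda$ and enumerate $W$ in increasing order as $\langle w_\alpha : \alpha < \kappa \rangle$ for some ordinal $\kappa$. For each $\alpha < \kappa$, observe that $U \cup \{w_\beta : \beta < \alpha\}$ is contained in $\lambda \setminus \{w_\alpha\}$, since $U \cap W = \emptyset$ and $w_\beta \neq w_\alpha$ whenever $\beta < \alpha$. By the hypothesis, $A_{w_\alpha} \ind_A A_{\neq w_\alpha}$, and by monotonicity of non-forking (restricting the right-hand side to a subset) this yields
\[
A_{w_\alpha} \ind_A A_{\in U} \, A_{\{w_\beta : \beta < \alpha\}}.
\]
Thus $\langle A_{w_\alpha} \rangle_{\alpha < \kappa}$ is a non-forking sequence based on $A$ over the parameter set $A \cup A_{\in U}$, in the precise sense required by Fact \ref{left forking transitivity}.

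Applying that Fact with the enumerated sequence on the left and $B = A_{\in U}$ on the right immediately gives $A_{\in W} \ind_A A_{\in U}$, which is the desired conclusion.

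The argument is essentially a bookkeeping step on top of Fact \ref{left forking transitivity}; the only potential subtlety is ensuring that the hypothesis of the Fact is literally satisfied, i.e.\ that disjointness of $U$ and $W$ guarantees each $A_{w_\alpha}$ is non-forking over $A$ from the combined set $A_{\in U} \cup A_{w_{<\alpha}}$. This is where monotonicity of non-forking from a larger to a smaller set of parameters is used, and it is straightforward because the hypothesis already gives independence from all of $A_{\neq w_\alpha}$. No appeal to dependence of $T$ or to any more refined properties of forking is needed.
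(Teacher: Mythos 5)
Your argument is correct and is exactly the paper's proof: the authors simply cite ``monotonicity and transitivity on the left,'' and your write-up spells out precisely that reduction (monotonicity to shrink $A_{\neq w_\alpha}$ down to $A_{\in U}A_{w_{<\alpha}}$, then Fact \ref{left forking transitivity} with $B = A_{\in U}$). Nothing to add.
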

\begin{proof}
    Monotonicity and transitivity on the left.
\end{proof}

\begin{observation}\label{indisc preserve}
    Suppose $I$ is an indiscernible sequence over $A$ and
    $B \ind_A I$. Then $I$ is indiscernible over $AB$.
\end{observation}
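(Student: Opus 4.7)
The strategy is by contradiction: assume $I$ is not $AB$-indiscernible and produce a strong splitting witness for $\tp(b/AI)$ over $A$ for some tuple $b$ in $B$. By Fact \ref{fact:splitfork} this will force $\tp(b/AI)$ to divide (hence fork) over $A$, contradicting $b \ind_A I$, which follows from $B \ind_A I$.

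So suppose $I$ is not $AB$-indiscernible. Then there exist a formula $\phi(\bar x, y) \in L(A)$, a tuple $b$ from $B$, and increasing $n$-tuples $\bar s, \bar t$ of indices from $I$ with $\models \phi(\bar a_{\bar s}, b) \wedge \neg \phi(\bar a_{\bar t}, b)$. Passing to an $\om$-subsequence of $I$ containing $\bar s$ and $\bar t$ (and using monotonicity of non-forking on the left to keep $B \ind_A I$), we may assume $I = \lseq{a}{i}{\om}$ and that every index occurring in $\bar s$ or $\bar t$ is less than some fixed $M<\om$. Fix $N \ge n$ and define $J_k := (a_{M+kN}, a_{M+kN+1}, \ldots, a_{M+kN+n-1})$ for each $k<\om$. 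Then $(J_k)_{k<\om}$ is a sequence of pairwise disjoint increasing $n$-tuples from $I$, all lying strictly above $\bar s$ and $\bar t$, and by $A$-indiscernibility of $I$ it is itself $A$-indiscernible as a sequence of $n$-tuples.

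Apply Fact \ref{fct:depend} to this sequence and to the formula $\phi(\bar x, b)$: the sets $\set{k : \models \phi(J_k,b)}$ and $\set{k : \models \neg\phi(J_k,b)}$ cannot both be unbounded, so $\phi(J_k,b)$ is eventually constant. Without loss of generality (swapping the roles of $\bar s$ and $\bar t$ otherwise) $\phi(J_k, b)$ holds for all $k \ge K$ for some $K$. Set $J' := (\bar a_{\bar t}, J_K, J_{K+1}, \ldots)$. Since every index of $\bar t$ is below $M$ while every $J_k$ lies strictly above $M$, $J'$ is a sequence of pairwise disjoint increasing $n$-tuples from $I$ in strictly increasing order. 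Any finite sub-tuple of $J'$, once flattened, is an increasing tuple from $I$, whose $A$-type is determined by length via $A$-indiscernibility of $I$; consequently $J'$ itself is an $A$-indiscernible sequence of $n$-tuples. By construction $\models \neg\phi(J'_0, b)$ while $\models \phi(J'_k, b)$ for all $k \ge 1$.

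Hence, with $\psi(y, \bar z) := \neg\phi(\bar z, y)$, the $A$-indiscernible sequence $J'$ (contained in $AI$) together with $\psi$ witnesses that $\tp(b/AI)$ strongly splits over $A$. By Fact \ref{fact:splitfork} this type divides (and therefore forks) over $A$, contradicting $b \ind_A I$. The main subtle step is the passage from the mere stabilization supplied by dependence to a genuine alternation usable for strong splitting; this is resolved by inserting the pre-selected anomaly $\bar a_{\bar t}$ as the initial term of $J'$, while the remaining terms come from the constant-value tail of $(J_k)$.
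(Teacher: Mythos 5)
Your proof is correct and takes essentially the same route as the paper's: both arguments come down to the contrapositive of Fact \ref{fact:splitfork} (in a dependent theory strong splitting implies forking, so $B\ind_A I$ forces $\tp(B/AI)$ not to split strongly over $A$, which is exactly indiscernibility of $I$ over $AB$); you merely spell out the extraction of the strong splitting witness that the paper leaves implicit. One small remark: the detour through Fact \ref{fct:depend} to stabilize the truth values along the blocks $J_k$ is unnecessary --- the single block $J_0$ already disagrees with one of $\bar a_{\bar s},\bar a_{\bar t}$, and prepending that anomalous tuple to $(J_0,J_1,\dots)$ gives the required $A$-indiscernible witness directly.
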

\begin{proof}
    By Fact \ref{fact:splitfork}  $\tp(B/AI)$ does not split strongly
    over $A$. Recall that this implies that for every $\a_1,\a_2 \in
    I$ which are on the same $A$-indiscernible sequence we have
    $B\a_1 \equiv_A B\a_2$, which is precisely what we want.
\end{proof}

We proceed to the main results of this section. The first theorem
allows us to construct mutually indiscernible (Morley) sequences
when started with a non-forking sequence.

\begin{theorem}\label{mutually ind}
    Let $T$ be a dependent theory, $A$ a set, and let $\set{a_i\colon i<\ka}$ be a set
    of tuples satisfying
    $a_i \ind_A a_{<i}$.
    Then there are mutually $A$-indiscernible infinite sequences
    $I_i$ (that is, $I_i$ is indiscernible over $A\cup\bigcup\set{I_j\colon j\neq i}$), each $I_i$
    starts with $a_i^0=a_i$.
    Moreover, if $\ka = k$ is finite, then $I_i\ind_AI_{<i}$ for all
    $i$ and
    for $i>0$ we have that
    $I_i$ is a Morley sequence in $\tp(a_i/Aa_{<i})$ based on
    $A$, and if $\tp(a_0/A)$ does not fork over $A$, then we can get
    $I_0$ to be a Morley sequence in $\tp(a_0/A)$ over $A$.
\end{theorem}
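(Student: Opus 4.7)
The plan is to establish the finite case $\ka = k$ by induction on $k$, and then obtain the infinite case via the same scheme (dropping the Morley and non-forking refinements) combined with the compactness-type arguments of Fact \ref{Erdos Rado} and Remark \ref{rmk:indisc}.

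For the base $k = 1$, I would invoke Lemma \ref{lem:ind exist}(i) with $C = A$ to produce $I_0$ as an $A$-indiscernible Morley sequence in $\tp(a_0/A)$ based on $A$ starting with $a_0$ (using the stated non-forking hypothesis on $\tp(a_0/A)$). For the inductive step, assume $I_{<i}$ has been constructed satisfying the conclusion. Since $a_i \ind_A a_{<i}$, apply Lemma \ref{lem:ind exist}(ii) with $a = a_i$, $C = A a_{<i}$, and $D = A I_{<i}$. This yields $I_{<i}' \equiv_{A a_{<i}} I_{<i}$ together with a sequence $I_i$ that is Morley in $\tp(a_i / A I_{<i}')$ based on $A$, starts with $a_i$, and is $A I_{<i}'$-indiscernible; by Fact \ref{left forking transitivity}, $I_i \ind_A I_{<i}'$. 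The automorphism taking $I_{<i}$ to $I_{<i}'$ fixes $A a_{<i}$ pointwise, so it preserves all inductive properties of $I_{<i}$ that depend only on the type over $A a_{<i}$: mutual $A$-indiscernibility, the Morley property of each $I_j$ in $\tp(a_j/A a_{<j})$ based on $A$, and the independencies $I_j \ind_A I_{<j}$. The Morley property of $I_i$ in $\tp(a_i/A a_{<i})$ based on $A$ and the relation $I_i \ind_A I_{<i}'$ hold by construction.

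The technical heart is verifying that $\{I_j'\}_{j \le i}$ (with $I_i' := I_i$) is mutually $A$-indiscernible. Indiscernibility of $I_i$ over $A I_{<i}'$ is given; for each $j < i$, to obtain indiscernibility of $I_j'$ over $A I_{<i, \neq j}' I_i$, Observation \ref{indisc preserve} reduces the task to showing $I_{<i, \neq j}' I_i \ind_A I_j'$. Viewing $(I_{<i, \neq j}', I_i)$ as a two-term non-forking sequence with parameter $I_j'$ and applying Fact \ref{left forking transitivity} reduces this further to the conditions $I_{<i, \neq j}' \ind_A I_j'$ and $I_i \ind_A I_{<i}'$, the latter being immediate. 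The former is the main obstacle: the natural inductive hypothesis yields only the opposite direction $I_j' \ind_A I_{<i, \neq j}'$, and symmetry of non-forking is unavailable in a general dependent theory. I would overcome this by strengthening the inductive hypothesis to maintain the non-forking set invariant $I_l' \ind_A I_{\neq l}'$ (within the constructed collection) for every $l$, from which Corollary \ref{cor:indep set} yields independence in both directions. Preserving this invariant through the inductive step is the delicate core of the argument: it relies on the dependence hypothesis via Fact \ref{fact:splitfork} and the non-strong-splitting character of the Morley extensions produced by the Erd\H{o}s--Rado component of Lemma \ref{lem:ind exist}, which forces the newly constructed $I_i$ to interact symmetrically with the previously constructed sequences.

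For the infinite case ($\ka$ arbitrary), no Morley or non-forking conditions need to be maintained; I would run the same inductive scheme and use Fact \ref{Erdos Rado} together with Remark \ref{rmk:indisc} to extract mutually $A$-indiscernible $\om$-sequences at each stage, so that only the mutual indiscernibility clause of the conclusion has to be preserved at each step.
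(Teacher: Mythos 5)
Your overall skeleton (compactness reduction to finite $\ka$, induction on $k$, an application of Lemma \ref{lem:ind exist}(ii) with $C = Aa_{<i}$ and $D = AI_{<i}$ to realign the old sequences and append a new Morley sequence, then Fact \ref{left forking transitivity} and Observation \ref{indisc preserve} to restore mutual indiscernibility) is the same as the paper's. But what you call the ``technical heart'' contains a genuine gap, and it is self-inflicted. To show that an old sequence $I'_j$ ($j<i$) stays indiscernible after the new $I_i$ is adjoined, you apply Observation \ref{indisc preserve} over the base $A$, which forces you to prove $I'_{<i,\neq j}I_i \ind_A I'_j$, and in particular $I'_{<i,\neq j} \ind_A I'_j$ --- an independence in the ``wrong direction''. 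Your proposed repair, strengthening the induction hypothesis to the set-independence $I'_l \ind_A I'_{\neq l}$ for every $l$ so that Corollary \ref{cor:indep set} applies, cannot be carried through: when $I_i$ is adjoined the construction yields only the one-sided $I_i \ind_A I'_{<i}$ (via transitivity on the left), and nothing gives $I'_j \ind_A I'_{<i,\neq j}I_i$ for the old indices $j$. Non-forking is not symmetric in a general dependent theory, and the appeal to Fact \ref{fact:splitfork} and the ``non-strong-splitting character of the Morley extensions'' forcing $I_i$ to ``interact symmetrically'' with the earlier sequences is not an argument --- Lemma \ref{lem:ind exist} produces no independence of the old sequences over the new one. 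Note that the theorem's own ``moreover'' clause asserts only $I_i \ind_A I_{<i}$, not set-independence, which is a sign that the stronger invariant is not available.

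The gap disappears once Observation \ref{indisc preserve} is applied over the enlarged base $AI'_{<i,\neq j}$ rather than over $A$. By the induction hypothesis $I'_j$ is already indiscernible over $AI'_{<i,\neq j}$, so the only set to be added to the base is $I_i$, and the Observation then requires only $I_i \ind_{AI'_{<i,\neq j}} I'_j$. This follows from $I_i \ind_A I'_{<i}$ by monotonicity: dividing over a larger base is witnessed by a sequence indiscernible over the smaller one, so non-forking over $A$ implies non-forking over $AI'_{<i,\neq j}$. This is exactly how the paper closes the induction; no independence of the old sequences from the new one is needed anywhere, and the extra invariant you introduce is both unprovable by your sketch and unnecessary.
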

\begin{proof}
    Note that by compactness it is enough to prove the theorem when $\ka=k<\om$;
    we will
    prove this by induction on $k$. Clearly, there is nothing to prove
    for $k=1$.

    So assume $\lseq{a}{i}{k+1}$ are given, $a_i \ind_A a_{<i}$. By
    the induction hypothesis there are \lseq{I'}{i}{k} mutually
    indiscernible, $I'_i$ starts with $a_i$, $I'_i$ is a Morley
    sequence over $I'_{<i}$ based on $A$.

    By Lemma \ref{lem:ind exist} (ii) with $D = A\cup\bigcup\lseq{I'}{i}{k} $
    and $C = Aa_{<k}$, there are \lseq{I}{i}{k+1}
    satisfying
    \begin{itemize}
    \item
        $\lseq{I}{i}{k} \equiv_{Aa_{<k}} \lseq{I'}{i}{k}$. So in
        particular these are mutually $A$-indiscernible sequences
        starting with $a_i$; all the non-forking requirements are
        preserved too.
    \item
        $I_k$ is a Morley sequence in $\tp(a_k/AI_{<k})$ based on
        $A$ starting with $a_k$. So in particular it is
        indiscernible over $AI_{<k}$.
    \end{itemize}

    Now we are going to use that $T$ is dependent.
    By Fact
    \ref{left forking transitivity} we know that $I_k \ind_AI_{<k}$, so in
    particular $I_k \ind_{AI_{<i}I_{i \in (i,k)}}I_i$ for all $i<k$.
    By the induction hypothesis $I_i$ is indiscernible over the base
    $AI_{<i}I_{\in(i,k)}$.
    By Observation
    \ref{indisc preserve} this implies that $I_i$ is indiscernible over
    $AI_{<i}I_{>i}$, as required. \end{proof}

Although we find the theorem above interesting on its own, it will
normally not be enough for our applications, since we will often
be interested in starting with given indiscernible sequences (e.g.
exemplifying dividing) and ``make'' them mutually indiscernible,
that is, find mutually indiscernible sequences of the same type
keeping a part of the original configuration (e.g. the first
elements). We make several steps in that direction.

\begin{remark}
    The reader should be aware that related results can be found in Shelah \cite{Sh783} (e.g. Claim
    5.13). However, we believe that Claim 5.13(1) is wrong as stated
    there and the assumptions of Claim 5.13(2) are too strong for
    what we are hoping for, so we prefer not to rely on Shelah's
    work here.
\end{remark}

\begin{lemma}\label{lem:array}
    Let $I = \lseq{a}{i}{\om}$ be an indiscernible sequence over $Ab$ such that $\tp(I/Ab)$ does not fork over $A$.
    Then there exists a Morley sequence \lseq{I}{\al}{\om} over
    $Ab$ based on $A$
    with $I_0 = I$ such that for every $\al$ we have $I_\al$ is
    indiscernible over $AbI_{\neq \al}$ (here $b$ can be an empty, finite or infinite
    tuple).

\end{lemma}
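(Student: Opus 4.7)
The plan is to extend $I$ to a long Morley sequence of copies of $I$ (viewed as long tuples) and then extract an $\om$-subsequence using an Erd\H{o}s--Rado argument so that mutual small-tuple indiscernibility holds. Since $\tp(I/Ab)$ does not fork over $A$ by hypothesis, Lemma~\ref{lem:ind exist}(i), applied with $I$ viewed as a single long tuple, yields a Morley sequence $\langle I_\alpha : \alpha < \mu\rangle$ in $\tp(I/Ab)$ based on $A$ with $I_0 = I$, where $\mu$ is chosen large enough that Fact~\ref{Erdos Rado} applies with $\lambda = |Ab| + |T|$ to the flattened $(\mu \times \om)$-array. Each $I_\alpha$ realizes $\tp(I/Ab)$ and is therefore internally $Ab$-indiscernible; moreover, by Fact~\ref{left forking transitivity} combined with Fact~\ref{fact:splitfork}, every $\tp(I_\alpha / AbI_{<\alpha})$ is non-forking, hence non-strongly-splitting, over $A$.

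The heart of the proof is then to extract an $\om$-subsequence $\alpha_0 < \alpha_1 < \cdots < \mu$ such that, writing $I_\alpha = \langle a_i^\alpha : i < \om\rangle$, the sub-array $J' := (a_i^{\alpha_k})_{(k, i) \in \om \times \om}$ is $Ab$-indiscernible in lex order. Big-tuple $Ab$-indiscernibility of $\langle I_\alpha\rangle_{\alpha < \mu}$ handles the cross-block structure for free; the non-strong-splitting of each $\tp(I_\alpha / AbI_{<\alpha})$ over $A$ is the essential extra input, ensuring that any $A$-indiscernible sequence contained in $AbI_{<\alpha}$ remains indiscernible upon adjoining $I_\alpha$, so that within-block indices across blocks become interchangeable without altering $Ab$-types. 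A Ramsey-style application of Fact~\ref{Erdos Rado} at the small-tuple level then delivers the desired sub-$\om$-sequence of blocks.

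Finally, setting $I_k := I_{\alpha_k}$ and composing with an automorphism over $Ab$ to force $I_0 = I$, the conclusion follows. The Morley property over $Ab$ based on $A$ is inherited by subsequences, so $\langle I_k : k<\om\rangle$ is a Morley sequence with $I_0 = I$ as required. Each $I_k$'s indiscernibility over $AbI_{\neq k}$ is a direct translation of the lex-order $Ab$-indiscernibility of $J'$: the positions $(k, i)$ within block $k$ sit strictly between $\{(l, j) : l < k\}$ and $\{(l, j) : l > k\}$ in lex order, so any order-preserving permutation of block $k$'s indices extends to an order-preserving permutation of $\om\times\om$ fixing every other block pointwise.

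The main obstacle is the Erd\H{o}s--Rado extraction in the second paragraph: cross-block uniformity is immediate from big-tuple indiscernibility of $\langle I_\alpha\rangle$, but passing to small-tuple $Ab$-indiscernibility of the flattened sub-array is exactly where the dependence of $T$ must enter (via non-strong-splitting), and the Ramsey argument must be tuned to interact with the block structure so that one extracts a sub-$\om\times\om$ array rather than a flat sub-$\om$-sequence. Once this is done the rest of the verification is routine.
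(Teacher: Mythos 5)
Your overall strategy---inflate $I$ to a long non-forking sequence of big tuples and then extract---begins the same way as the paper's proof, and your third paragraph (deducing mutual indiscernibility from lex-order indiscernibility of the flattened array) would be a valid deduction \emph{if} such an array existed. The genuine gap is in the second paragraph: the extraction target you set yourself is too strong and is unachievable in general, so this is not a Ramsey argument waiting to be ``tuned'' but a wrong intermediate goal. Lex-order $Ab$-indiscernibility of the flattened $\omega\times\omega$ array forces $a^{0}_{0}a^{0}_{1}\equiv_{Ab}a^{0}_{0}a^{1}_{0}$, i.e.\ a pair inside one block must have the same type as a pair straddling two blocks. Take $T$ to be the (stable, hence dependent) theory of a single equivalence relation $E$ with infinitely many infinite classes, $A=b=\emptyset$, and $I$ an indiscernible sequence of distinct elements of one class. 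Every block must realize $\tp(I/Ab)$, so $E(a^{k}_{0},a^{k}_{1})$ holds for each $k$; but the requirement that the blocks form a non-forking sequence based on $A$ forces distinct blocks into distinct classes, since $E(x,a^{0}_{0})$ divides over $\emptyset$, so $\neg E(a^{0}_{0},a^{1}_{0})$. Hence no subsequence of blocks has a lex-indiscernible flattening, even though the conclusion of the lemma (mutual indiscernibility, a strictly weaker property) is easily achieved in this example by putting the blocks in distinct classes.

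You do already hold the correct extra ingredient---$\tp(I_{\alpha}/AbI_{<\alpha})$ non-forking over $A$ implies non-strongly-splitting over $A$, which preserves indiscernibility of sequences it is independent from (Observation \ref{indisc preserve}, resting on Fact \ref{fact:splitfork})---but it must be deployed inductively toward mutual indiscernibility rather than toward flattening. The paper's argument: at stage $\alpha$ choose $I'_{\alpha}\equiv_{Ab}I$ with $I'_{\alpha}\ind_{A}bI_{<\alpha}$, apply Fact \ref{Erdos Rado} to replace it by an $\omega$-sequence $I_{\alpha}$ indiscernible over $AbI_{<\alpha}$ (still of the same type over $Ab$, and still non-forking by finite character); then for each $\beta<\alpha$, monotonicity gives $I_{\alpha}\ind_{AbI_{<\alpha,\neq\beta}}I_{\beta}$, so Observation \ref{indisc preserve} shows $I_{\beta}$ remains indiscernible over $AbI_{\le\alpha,\neq\beta}$. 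This produces mutual indiscernibility block by block; one final application of Fact \ref{Erdos Rado} to the sequence of blocks (which preserves mutual indiscernibility, as that is a condition on finitely many elements at a time) yields the Morley property over $Ab$ based on $A$, and an automorphism over $Ab$ restores $I_{0}=I$.
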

\begin{proof}
    Let $\mu$ be a cardinal.
    We construct by induction on $\al<\mu$ a sequence $I_\al$ such
    that
    %$\lseq{I}{\al}{\mu}$ such that
    \begin{itemize}
    \item
        $I_0 = I$
    \item
        $I_{\al} \equiv_{Ab} I$
    \item
        $I_\al\ind_{A}bI_{<\al}$
    \item
        $I_i$ is indiscernible over $AbI_{\neq i}$ for all $i\le\al$.
    \end{itemize}

    For $\al = 0$ there is nothing to do (note that we are using $I \ind_AAb$).
    Assume that we have $I_{<\al}$ as above. Let $I'_\al$ be such that
    \begin{itemize}
    \item
        $I'_\al \equiv_{Ab} I$
    \item
        $I'_\al \ind_A I_{<\al}b$
    \end{itemize}

    Clearly, we may assume that $I'_\al$ is as long as we wish, hence
    by Fact \ref{Erdos Rado} there exists an $\om$-sequence $I_\al$
    which is indiscernible over $AI_{<\al}b$ and every $n$-type of
    $I_\al$ over $AI_{<\al}b$ ``appears'' in $I'_\al$. Clearly $I_\al \equiv_{Ab} I$.
    By finite character of forking $I_\al \ind_AbI_{<\al}$. By
    monotonicity, for every $\be<\al$ we have $I_\al \ind_{AbI_{<\al, \neq
    \be}} I_{\be}$.  Since $I_{\be}$ is indiscernible over $AbI_{<\al, \neq \be}$,
    by Observation \ref{indisc preserve}, we have $I_\be$ is
    indiscernible over $AbI_{\le \al, \neq \be}$, as required.

    So we obtain a sequence \lseq{I}{\al}{\mu} which is a non-forking
    sequence over $Ab$ (based on $A$) of mutually indiscernible sequences over $A$,
    starting with $I_0$.
    Choosing $\mu$ big enough, using Fact \ref{Erdos Rado} as in
    the proof of Lemma \ref{lem:ind exist}, we may assume in addition that
    $\lseq{I}{\al}{\om}$ is also $Ab$-indiscernible, i.e. a Morley
    sequence over $Ab$ based on $A$.
\end{proof}

\begin{theorem}\label{dividing with Morley}
Any instance of dividing over $A$ which is witnessed by a sequence
$I$ such that $tp(I/A)$ does not fork over $A$, can always be
witnessed by a Morley sequence.

Moreover, if $I:=\langle a_i\rangle_{i\in \omega}$ is indiscernible
over $Ab$ and such that $\{\ph(x,a_i)\}_{i\in \omega}$ is
$k$-inconsistent and $tp(I/Ab)$ does not fork over $A$, then there
is a Morley sequence $\langle a^i \rangle_{i\in \omega}$ over $Ab$
based on $A$  such that $a^0=a_0$ and $\{\ph(x,a^i)\}_{i\in \omega}$
is inconsistent.
\end{theorem}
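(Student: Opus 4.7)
The plan is to apply Lemma~\ref{lem:array} to $I$, take as the candidate Morley sequence the ``column'' of the resulting array, and prove inconsistency of the $\varphi$-set by a dependence argument. Since $I$ is $Ab$-indiscernible with $\tp(I/Ab)$ non-forking over $A$, Lemma~\ref{lem:array} provides a Morley sequence of rows $\langle I_\alpha\rangle_{\alpha<\omega}$ over $Ab$ based on $A$, with $I_0=I$ and each $I_\alpha=\langle a^\alpha_i\rangle_{i<\omega}$ indiscernible over $Ab\cup I_{\neq\alpha}$. Setting $a^\alpha:=a^\alpha_0$, the sequence $\langle a^\alpha\rangle_{\alpha<\omega}$ is $Ab$-indiscernible (as a projection of the $Ab$-indiscernible sequence of rows), starts with $a^0=a_0$, and satisfies $a^\alpha\ind_A a^{<\alpha}b$ by monotonicity from $I_\alpha\ind_A bI_{<\alpha}$; so it is a Morley sequence over $Ab$ based on $A$ beginning with $a_0$, as required.

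It remains to show that $\{\varphi(x,a^\alpha)\}_{\alpha<\omega}$ is inconsistent. Suppose towards a contradiction that some $c$ realizes this type. For each $\alpha$, $Ab$-indiscernibility of $I_\alpha$ together with the $k$-inconsistency of $\{\varphi(x,a^\alpha_i)\}_{i<\omega}$ forces the support $S_\alpha:=\{i<\omega:\models\varphi(c,a^\alpha_i)\}$ to be finite of size at most $k-1$, with $0\in S_\alpha$ by hypothesis. The key claim is that for every finite $F\subseteq\omega$ there exists $c^{(F)}$ satisfying $\bigwedge_{\alpha\notin F}\varphi(x,a^\alpha)\wedge\bigwedge_{\alpha\in F}\neg\varphi(x,a^\alpha)$. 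Granting this, compactness yields for any $U\subseteq\omega$ a realization of $\bigwedge_{\alpha\in U}\varphi(x,a^\alpha)\wedge\bigwedge_{\alpha\notin U}\neg\varphi(x,a^\alpha)$; taking $U$ both infinite and coinfinite produces an element on which both the truth set and its complement for $\varphi(\cdot,a^\alpha)$ are unbounded in the $Ab$-indiscernible column $\langle a^\alpha\rangle$, contradicting Fact~\ref{fct:depend} and hence the dependence of $T$.

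To build $c^{(F)}$, for each $\alpha\in F$ use the $Ab\cup I_{\neq\alpha}$-indiscernibility of $I_\alpha$ to obtain an automorphism $\sigma_\alpha\in\mathrm{Aut}(\mathfrak{C}/Ab\cup I_{\neq\alpha})$ whose action on $I_\alpha$ is a shift of indices by some $N_\alpha>\max S_\alpha$. Compose these $\sigma_\alpha$'s (the order of composition is immaterial on the data we care about, since each $\sigma_\alpha$ fixes every row other than $I_\alpha$ pointwise) and apply to $c$. The resulting $c^{(F)}$ has support on row $\beta\notin F$ unchanged, still equal to $S_\beta$ and hence containing $0$, while its support on row $\alpha\in F$ becomes the shifted set $S_\alpha+N_\alpha$, which misses $0$ by choice of $N_\alpha$. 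Therefore $\models\varphi(c^{(F)},a^\beta)$ if and only if $\beta\notin F$.

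The main technical subtlety is that order-preserving self-embeddings of an $\omega$-indexed sequence are not bijections, so a priori $\sigma_\alpha^{-1}$ may move some $a^\alpha_i$ outside $I_\alpha$, where the truth of $\varphi(c,\cdot)$ is uncontrolled and the support analysis breaks. The standard remedy is to extend each $I_\alpha$ to a $\mathbb{Z}$-indexed sequence indiscernible over $Ab\cup I_{\neq\alpha}$ (carried out in parallel for all rows by compactness, preserving mutual indiscernibility, as in the proof of Lemma~\ref{lem:array}). Once all rows are $\mathbb{Z}$-indexed, shifts are genuine bijections of the index set and $\sigma_\alpha^{-1}$ preserves the sequence, making the support analysis at index $0$ valid.
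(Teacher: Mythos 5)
Your proposal is correct in outline and starts exactly as the paper does, by feeding $I$ into Lemma~\ref{lem:array}; but the endgame is genuinely different. The paper does not try to identify which Morley sequence works: it assumes that \emph{every} Morley sequence over $Ab$ based on $A$ with the right first-element type leaves $\{\ph(x,a^i)\}$ consistent, and then, using Lemma~\ref{negation of dividing}(i) row by row, inductively realizes the full array type $\{(\ph(x,a^i_j))^{j=\eta(i)}\}$ for every $\eta$, contradicting dependence via Observation~\ref{obs:depstrong}. You instead prove the sharper statement that the zeroth column of the array itself witnesses dividing, deriving the contradiction from alternation of $\ph$ along that single indiscernible column (Fact~\ref{fct:depend}) by conjugating a putative realization with index-shift automorphisms of individual rows. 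What your route buys is an explicit witnessing sequence and a very concrete mechanism; what it costs is exactly the bookkeeping you flagged. Two repairs are needed. First, after the rows are re-indexed by $\mathbb{Z}$ the supports $S_\al$ may contain negative indices, and the condition $N_\al>\max S_\al$ is then not enough: with $S_\al=\{-3,0\}$ and $N_\al=3$ the shifted support $S_\al+N_\al=\{0,3\}$ still contains $0$. The correct requirement is $-N_\al\notin S_\al$ (e.g.\ $N_\al>\max\{|i|\colon i\in S_\al\}$), which is always achievable since $S_\al$ is finite. Second, the $\mathbb{Z}$-extension must keep the \emph{original} elements $a^\al_i$ and preserve mutual indiscernibility; this is doable (prepend to row $\al$ the image of an initial segment under an automorphism over $AbI_{\neq\al}$ matching a tail of $I_\al$ onto $I_\al$, which fixes the other rows and hence their indiscernibility over the modified base), but it deserves a sentence rather than a wave at compactness. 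Note finally that your shifting construction is in essence a hands-on reproof of Lemma~\ref{negation of dividing}(i), which already handles the non-invertibility of $\om$-shifts by passing to a dense order and using minimality of $k$; quoting that lemma for each row in your ``key claim'' would let you delete the $\mathbb{Z}$-reindexing entirely and would bring your argument very close to the paper's.
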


\begin{proof}
We prove the ``moreover'' part.

Assume that $I = \langle a_i\colon i<\om\rangle$ is an
$Ab$-indiscernible sequence such that $\{\ph(x,a_i)\colon i<\om\}$
is $k$-inconsistent for some $k\in \mathbb{N}$. Denote $a = a_0$.

It is clearly enough to find a Morley sequence as in the statement
of the theorem such that $tp(a/Ab)=tp(a^0/Ab)$. So assume towards a
contradiction that given any Morley sequence $\langle a^i\colon
i<\om\rangle$ over $Ab$ based on $A$ with $tp(a/A)=tp(a^0/A)$ we
have that

\[
\bigwedge_{i<\om} \ph(x,a^i)
\]
is consistent.

Let $a_i^0:=a_i$ and let $I^0:=\langle a_i\colon i<\om\rangle$. By
Lemma \ref{lem:array} there is a Morley sequence of sequences
$\langle I^j\colon j<\om \rangle$ over $Ab$ based on $A$ such that
$I^j$ is indiscernible over $AbI^{\neq j}$; let $I^j:=\langle
a_i^j\colon i<\om\rangle$.

\begin{claim}
Let $\eta:\omega \rightarrow \om$ be a function. Then the sequence
$\langle a_{\eta(i)}^i\colon i<\om \rangle$ is a Morley sequence
over $Ab$ based on $A$.
\end{claim}

\begin{proof}
It is clear from the construction that $\langle a_0^i \rangle$ is a
Morley sequence over $Ab$ based on $A$. But it follows easily by
induction over $n$ (using mutual indiscernibility) that

\[
\tp\left(a^0_{\eta(0)},a^1_{\eta(1)},\dots, a^n_{\eta(n)}/Ab\right)
= \tp\left(a^0_0,a^1_0,\dots, a^n_0/Ab\right)
\]
\end{proof}

We will now prove that given any function $\eta:\omega \rightarrow
\om$ the type

\[
\left\{\left(\ph\left(x,a^i_j\right)\right)^{j=\eta(i)}\colon i,j
<\om\right\}
\]
is consistent, thus contradicting Observation \ref{obs:depstrong}.

The proof will again be by induction. Let

\[
p^n(x):=\bigwedge_{i=0}^n\bigwedge_{j<\om}
\left(\ph\left(x,a^i_j\right)\right)^{j=\eta(i)}\wedge
\bigwedge_{i={n+1}}^\omega \ph\left(x,a^i_{\eta(i)}\right);
\]
%let $B^n$ be the domain of $p^n$.

Since $\langle a^i_{\eta(i)}\colon i<\om\rangle$ is a Morley
sequence for all $\eta$, our hypothesis implies that
$p^{-1}(x):=\bigwedge_{i={n+1}}^\omega
\ph\left(x,a^i_{\eta(i)}\right)$ is consistent.

Assume that $p^n$ is consistent. Since $I^{n+1}$ is indiscernible
over $A^{n+1} = AI^{\neq n+1}$, and since $I = I^0$ witnesses that
$\ph(x,a_0)$ divides over $A$, it follows that $\ph(x,a_0^{n+1})$
divides over $A^{n+1}$ witnessed by $I^{n+1}$. By Lemma
\ref{negation of dividing}(i)  with $p^n(x)$ here standing for
$p(x)$ there (and the induction hypothesis), we have
\[
p^{n+1}(x):=\bigwedge_{i=0}^{n+1}\bigwedge_{j<\om}
\left(\ph\left(x,a^i_j\right)\right)^{j=\eta(i)} \wedge
\bigwedge_{i\ge{n+2}} \ph\left(x,a^i_{\eta(i)}\right)
\]
is consistent, which completes the induction.

By compactness
\[
\left\{\left(\ph\left(x,a^i_j\right)\right)^{j=\eta(i)}\colon
i,j<\om\right\}
\]
is consistent which contradicts dependence of $T$, see Observation
\ref{obs:depstrong}.
\end{proof}

\begin{remark}
Even though one can easily construct examples where forking does not
satisfy ``existence'', these are almost always quite artificial. In
most of the theories one works with it is always the case that
$tp(a/A)$ does not fork over $A$. In such cases Theorem
\ref{dividing with Morley} just states that any instance of dividing
can be witnessed with a Morley sequence.
\end{remark}

As a consequence, we obtain the following (quite desirable) property
of generically stable types:

\begin{corollary}\label{cor:stable divides Morley}
    Suppose that $\ph(x,b)$ divides over a set $A$ as exemplified by
    an $A$-indiscernible sequence $I$ with $\tp(I/A)$ does not fork
    over $A$. Assume furthermore that $\tp(b/A)$ is generically
    stable. Then any Morley sequence in $\tp(b/A)$ exemplifies that
    $\ph(x,b)$ divides over $A$.
\end{corollary}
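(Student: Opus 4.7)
My plan is to produce, via Theorem \ref{dividing with Morley}, one specific Morley sequence in $p := \tp(b/A)$ that witnesses the dividing, and then to use generic stability to argue that every Morley sequence in $p$ over $A$ has the same type over $A$ as this one, hence exhibits the same dividing.

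First I would apply the ``moreover'' part of Theorem \ref{dividing with Morley} with the auxiliary parameter $b$ of that theorem taken to be empty, so ``$Ab$-indiscernible'' reduces to ``$A$-indiscernible''. The hypotheses match exactly what we are given: an $A$-indiscernible sequence $I = \langle a_i\colon i<\om\rangle$ with $\{\ph(x,a_i)\}$ $k$-inconsistent and $\tp(I/A)$ non-forking over $A$; each $a_i$ realizes $p$, so after an $A$-automorphism we may assume $a_0 = b$. The conclusion yields a Morley sequence $J^{*} = \langle b^{*,i}\colon i<\om\rangle$ over $A$ based on $A$ with $b^{*,0} = b$ and $\{\ph(x,b^{*,i})\colon i<\om\}$ inconsistent. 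In particular $J^{*}$ is a Morley sequence in $p$ over $A$ that exemplifies the dividing of $\ph(x,b)$ over $A$.

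Next, let $J = \langle b^{i}\colon i<\om\rangle$ be any Morley sequence in $p$ over $A$. It suffices to show $J \equiv_{A} J^{*}$, for an $A$-automorphism carrying $J^{*}$ to $J$ would transport the $k$-inconsistent family $\{\ph(x,b^{*,i})\}$ to $\{\ph(x,b^{i})\}$, proving the latter exemplifies dividing. To obtain $J \equiv_{A} J^{*}$ I would appeal to generic stability of $p$. By Fact \ref{fact:genstab}(ii), $p$ is definable over $\acl(A)$, and so it extends canonically, via its defining schema, to a stationary $\tilde p \in \tS(\acl(A))$. I would then argue, by induction on $n$, that both $J$ and $J^{*}$, viewed over the base $\acl(A)$, are Morley sequences in $\tilde p$: each successive element realizes $p$ and is non-forking over $A$, hence over $\acl(A)$, from the earlier terms, so its type over $\acl(A)\cup\{\text{earlier terms}\}$ is pinned down as $\tilde p$ restricted to that set by stationarity; $\acl(A)$-indiscernibility is inherited from the indiscernible-set property guaranteed by Definition \ref{dfn:genstab}. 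Stationarity of $\tilde p$ then yields $b^{0}\ldots b^{n} \equiv_{\acl(A)} b^{*,0}\ldots b^{*,n}$ for all $n$, hence $J \equiv_{A} J^{*}$, and we are done.

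The main obstacle is this last step: one must carefully justify that a Morley sequence in $p$ over $A$ really behaves as a Morley sequence in the extension $\tilde p$ over $\acl(A)$, both in the non-forking sense and in being $\acl(A)$-indiscernible, not merely $A$-indiscernible. An alternative and perhaps cleaner route, avoiding $\acl(A)$ altogether, is to appeal directly to Fact \ref{fact:genstab}(iii): non-forking restricted to the realizations of $p$ is already a \emph{stable} independence relation, and within any stable independence relation Morley sequences carry a unique type via the standard stationarity-over-models argument.
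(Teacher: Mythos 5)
Your proposal is correct and follows essentially the same route as the paper: Theorem \ref{dividing with Morley} produces one Morley sequence in $\tp(b/A)$ witnessing the dividing, and stationarity of the generically stable type over $\acl(A)$ (which forces any two Morley sequences in $\tp(b/A)$ to be $A$-conjugate) transfers the inconsistency to every Morley sequence. The paper's proof is just a terser version of yours, leaving the stationarity step at the level of "clearly any Morley sequence will work."
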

\begin{prf}
    By Theorem \ref{dividing with Morley} we know that there exists
    such a Morley sequence; but by stationarity of the generically
    stable type $\tp(b/A)$ over $\acl(A)$, clearly any Morley
    sequence will work.
\end{prf}

\subsection{Strong dependence and finite weight}

We will finish this paper by pointing out some results that follow
from strong dependence and the results we have proved so far.

Let us first recall the classical concept of weight (we will give
the definition without assuming anything on the theory; of course,
it does not always give rise to a well-behaved notion).

As we mentioned before, some of the partial results we get arise
when we restrict the definition of weight to certain kind of types
(for example, generically stable ones). All of the definitions can
be given and studied with either forking or \th-forking. However,
%Fact
%\ref{forking in stable} shows that forking is the same as
%\th-forking for stable types and
forking is clearly the right notion for generically stable types
(see \cite{Us}) so from now on we will just work with the standard
classification theory notions (forking, splitting and dividing).
%We
%assume the reader is familiar with all of these (there are many
%references; for example, section 1 of \cite{Us} summarizes
%everything that

\begin{definition}\label{dfn:weight}
$\left.\right.$

\begin{itemize}

\item Let $p(x)$ be any type over some set $A$. We will say that
$a, \langle b_i\rangle_{i=1}^n$ witnesses $\pwt(p(x))\geq n$
(\emph{forking pre-weight of $p$} is at least $n$) if $a\models
p(x)$, $\seq{b_i}_{i=1}^n$ is $A$-independent and $a\nind_A b_i$ for
all $i,j$. If $n$ is maximal such that such a witness exists, we
will say that $a, \langle b_i\rangle_{i=1}^n$ witnesses
$\pwt(p(x))=n$ and that $p$ \emph{has forking pre-weight $n$}.

\item Let $p(x)$ be any type over some set $A$. We will say that
$a,B, \langle b_i\rangle_{i=1}^n$ witnesses $\wt(p(x))\geq n$
(\emph{forking weight of $p$} is at least $n$) if $a\models p(x)$,
$a\ind_A B$, $\seq{b_i}_{i=1}^n$ is $B$-independent sequence and
$a\nind b_i$ for all $i,j$. If $n$ is maximal such that such a
witness exists, we will say that $a,B, \langle b_i\rangle_{i=1}^n$
witnesses $\wt(p(x))=n$ and that $p$ has \emph{forking weight $n$}.
\end{itemize}
\end{definition}

Similar definitions can be given requiring that the types
$\tp(b_i/A)$ above are generically stable (see Fact
\ref{fact:genstab}), obtaining \emph{generically stable pre-weight
and weight of $p$} denoted $\gstpw(p)$ and $\gstw(p)$ respectively.
See section 8 of \cite{Us} for precise definitions.

\bigskip

The following follows easily from the results we have so far.

\begin{theorem}\label{theorem:finite stable weight}
%\begin{enumerate}
%\item Let $T$ be strong, $p(x)$ be any finitary type. Then
%$\stpw^\tho(p)$ is finite.

%\item
\label{theorem:finite gen stable weight}
    Assume $T$ is strongly dependent. Then every finitary type over a model (or just over
    an extension base) has rudimentary
    finite generically stable pre-weight.
%    (hence $\stpw^\tho(p)$,
%    $\stpw(p)$ are rudimentarily finite as well).
%\end{enumerate}
\end{theorem}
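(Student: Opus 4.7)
The plan is to argue by contradiction, manufacturing a dividing pattern of depth $\omega$ for $\tp(a/A)$ and then, via Lemma \ref{negation of dividing}(iii), a randomness pattern of depth $\omega$, contradicting strong dependence (cf.\ the remark following Definition \ref{dfn:strongdep}, which reduces matters to a singleton type).

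Suppose for contradiction that $A$ is an extension base, $p=\tp(a/A)$, and $a,\langle b_i : i<\omega\rangle$ witnesses $\gstpw(p)\ge\omega$: the family $\set{b_i : i<\omega}$ is $A$-independent, each $\tp(b_i/A)$ is generically stable, and $a \nind_A b_i$ for every $i$. First, for each $i$ I would extract a formula $\phi_i(x,b_i)\in\tp(a/Ab_i)$ that \emph{divides} over $A$ (not merely forks). This uses that $\tp(b_i/A)$ is generically stable, so that by Fact \ref{fact:genstab}(iii) non-forking behaves like a stable independence relation on its realizations; one locates an $A$-indiscernible sequence in $\tp(b_i/A)$ non-forking over $Aa$ that exemplifies dividing of some formula from $\tp(a/Ab_i)$ with parameter exactly $b_i$. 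This is the most delicate point of the argument.

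Next, apply Theorem \ref{mutually ind}, extended to infinite families by compactness as noted in its proof, to the $A$-independent set $\set{b_i : i<\omega}$ over the extension base $A$. This produces mutually $A$-indiscernible sequences $I_i=\langle b_i^j : j<\omega\rangle$ with $b_i^0=b_i$, each $I_i$ a Morley sequence in $\tp(b_i/A)$ based on $A$ (the extension-base assumption handles $I_0$). Since $\tp(b_i/A)$ is generically stable and $\phi_i(x,b_i)$ divides over $A$, Corollary \ref{cor:stable divides Morley} guarantees that the Morley sequence $I_i$ itself witnesses this dividing: for some $k_i<\omega$ the set $\set{\phi_i(x,b_i^j) : j<\omega}$ is $k_i$-inconsistent.

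These ingredients combine into a dividing pattern of depth $\omega$ for $\tp(a/A)$ in the sense of Definition \ref{dfn:strong}: the sequences $I_i$ are mutually $A$-indiscernible, each row $\set{\phi_i(x,b_i^j):j<\omega}$ is $k_i$-inconsistent, and $\set{\phi_i(x,b_i^0) : i<\omega}$ is consistent with $p$ since it is realized by $a$. By Lemma \ref{negation of dividing}(iii) this upgrades to a randomness pattern of depth $\omega$ for $p$, contradicting strong dependence of $T$. The principal obstacle is the first step: passing from the forking hypothesis $a\nind_A b_i$ to an honest dividing formula over $A$ with parameter exactly $b_i$. Once generic stability of $\tp(b_i/A)$ is brought to bear (either via symmetry and stationarity of generically stable types, or via the Chernikov--Kaplan-style phenomenon ``forking equals dividing'' that holds for generically stable types over extension bases), Theorem \ref{mutually ind} and Corollary \ref{cor:stable divides Morley} do the rest of the work cleanly.
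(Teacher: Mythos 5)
Your argument is correct and follows essentially the same route as the paper's (very brief) proof: build mutually indiscernible Morley sequences starting with the $b_i$, invoke Corollary \ref{cor:stable divides Morley} to see that each one exemplifies dividing, and conclude that an infinite witness would yield a dividing pattern of depth $\omega$, contradicting strong dependence via Lemma \ref{negation of dividing}. The only cosmetic differences are that the paper obtains mutual indiscernibility from stationarity of generically stable types rather than from Theorem \ref{mutually ind}, and that you spell out the forking-to-dividing step which the paper leaves implicit.
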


\begin{proof}
%The first item follows from Theorems \ref{finite weight} and
%\ref{thm:finiteweight}, using the fact that \th-forking and
%forking coincide on realizations of stable types, and both have
%all the desired properties of a stable independence relation.

%The second item is a stronger results, but only works if the
%theory is dependent.
This was original proved in section 8 of \cite{Us}; however, having
established Corollary \ref{cor:stable divides Morley}, the proof of
Theorem \ref{theorem:finite gen stable weight} becomes much easier
than the original one given in \cite{Us}. Indeed, just like in a
stable theory, given an instance of pre-weight $k$, as exemplified
by \set{b_i\colon i<k} realizing generically stable types, we can
simply construct ``mutually'' Morley sequences $I_i$ starting with
$b_i$, which by stationarity will be mutually indiscernible. By
Corollary \ref{cor:stable divides Morley} they exemplify dividing,
thus form a dividing pattern.
\end{proof}

We would like to generalize Theorem \ref{theorem:finite gen stable
weight} to forking weight. For notational simplicity, let us
concentrate on randomness patterns of depth 2 (analogous statements
for larger depth will follow by a simple induction).

The following theorem weakens the assumptions of Theorem
\ref{theorem:finite gen stable weight} somewhat, requiring only one
of the types to be generically stable.

\begin{theorem}\label{theorem:finite generic weight}
Let $a,b,c$ be elements and $A$ be a subset of a model $M$ of a
dependent theory $T$. If $\tp(b/A)$ is generically stable, $b\ind_A
c$, $\tp(a/Ab)$ divides over $Ac$, and $\tp(a/Ac)$ divides over
$Ab$, then there is a randomness pattern of depth 2 for $\tp(a/A)$.
In particular, $T$ is not dp-minimal.
\end{theorem}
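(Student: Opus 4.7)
The plan is to produce a dividing pattern of depth $2$ for $\tp(a/A)$; applying Lemma \ref{negation of dividing}(ii) will then convert it into a randomness pattern of depth $2$, yielding the non-dp-minimality conclusion. Fix formulas $\ph_0(x,b) \in \tp(a/Ab)$ and $\ph_1(x,c) \in \tp(a/Ac)$ witnessing the two dividing hypotheses with $k_0$- and $k_1$-inconsistency respectively. Since $a \models \ph_0(x,b)\wedge\ph_1(x,c)$, the conjunction is consistent with $\tp(a/A)$, which supplies clause (c) of the dividing pattern via Remark \ref{rmk:pattern0}.

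Fix any $Ab$-indiscernible sequence $I^1 = \langle c^j : j<\omega\rangle$ starting with $c^0=c$ and witnessing that $\ph_1(x,c)$ divides over $Ab$. Let $\bar p$ denote the canonical global extension of the generically stable type $\tp(b/A)$; by Fact \ref{fact:genstab}(ii) and stationarity, $\bar p|Ac = \tp(b/Ac)$, which is itself generically stable. Realize $\bar p | AI^1$ by some $b^*$, so that $b^* \equiv_{Ac} b$ and $b^* \ind_A I^1$. Applying an automorphism $\sigma$ over $Ac$ with $\sigma(b^*) = b$, replace $I^1$ by $\sigma(I^1)$: the new sequence is $Ab$-indiscernible, starts with $c$, witnesses dividing of $\ph_1(x,c)$ over $Ab$ (the parameters of $\ph_1$ lie in $Ac$), and now satisfies $b \ind_A I^1$.

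Construct $I^0 = \langle b^i : i<\omega \rangle$ as a Morley sequence in $\bar p | AI^1$ starting with $b^0 = b$ (valid, since $b \models \bar p | AI^1$). Generic stability of $\bar p | AI^1$ makes $I^0$ an $AI^1$-indiscernible set, and Fact \ref{left forking transitivity} gives $I^0 \ind_A I^1$; combined with $A$-indiscernibility of $I^1$, Observation \ref{indisc preserve} yields that $I^1$ is indiscernible over $AI^0$, so the two sequences are mutually $A$-indiscernible. By base monotonicity, $I^0$ restricts to a Morley sequence in $\tp(b/Ac) = \bar p|Ac$. Since $\ph_0(x,b)$ divides over $Ac$ and $\tp(b/Ac)$ is generically stable, Corollary \ref{cor:stable divides Morley} forces $\{\ph_0(x,b^i) : i<\omega\}$ to be $k_0$-inconsistent. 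Together, $(I^0, I^1)$ with the formulas $(\ph_0, \ph_1)$ form a dividing pattern of depth $2$ for $\tp(a/A)$, concluding the argument.

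The main delicacy is the asymmetry: generic stability is assumed only for $\tp(b/A)$, so $b$ and $c$ cannot be treated symmetrically. Generic stability is used in two critical places --- first to align $b$ with $I^1$ via stationarity of the non-forking extension of $\bar p$, and second to force \emph{every} Morley sequence in $\tp(b/Ac)$ to witness dividing of $\ph_0$ (via Corollary \ref{cor:stable divides Morley}), while $I^1$ is taken as the raw dividing witness on the $c$-side with no ``Morleyization'' required, its indiscernibility over $AI^0$ coming for free from $I^0 \ind_A I^1$ and Observation \ref{indisc preserve}.
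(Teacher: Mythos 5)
Your proof is correct and follows essentially the same route as the paper's: use generic stability to Morley-ize the $b$-side so that Theorem \ref{dividing with Morley} (in your case via its Corollary \ref{cor:stable divides Morley}) supplies the inconsistent row over $Ac$, keep the $c$-side witness raw, derive mutual indiscernibility from $I^0 \ind_A I^1$ together with Observation \ref{indisc preserve}, and finish with Lemma \ref{negation of dividing}. The only differences are cosmetic --- you fix the $c$-side sequence first and obtain $I^0$ as a Morley sequence of the invariant extension $\bar p$ over $AI^1$ (using stationarity to realign $I^1$ with $b$), where the paper builds $I$ Morley over $Ac$ first and then applies extension plus Erd\"os--Rado to make it indiscernible over the $c$-side --- and you inherit the same implicit gloss as the paper, namely that the dividing of $\ph_0(x,b)$ over $Ac$ is witnessed by a sequence whose type does not fork over $Ac$, which is what the hypotheses of Theorem \ref{dividing with Morley} and Corollary \ref{cor:stable divides Morley} actually require.
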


\begin{proof}
Let $a,b,c,A$ be as in the statement of the theorem. By definition
of dividing and Theorem \ref{dividing with Morley}, for every
cardinal $\mu$ there is a sequence $I:=\lseq{b}{i}{\mu}$ which is
Morley over $Ac$ and a formula $\ph(x,y)$ such that $\models
\ph(a,b_0)$ and $\{ \ph(x,b_i)\}_{i\in \omega}$ is $k$-inconsistent
for some $k\in \mathbb{N}$.

Since $b_0\ind_A c$ and $\tp(b_0/A)=\tp(b_i/A)$ is generically
stable, it follows that $\tp(b_0/Ac)=\tp(b_i/Ac)$ is generically
stable for all $i$. Using transitivity (Corollary 4.13 or Theorem
7.6 in \cite{Us}), it is easy to prove that $\langle b_i
\rangle_{i\in \omega}$ is also a Morley sequence over $A$. So in
particular, by right transitivity

\[
\langle b_i \rangle_{i\in \omega}\ind_A c
\]

By definition of dividing, there is an $Ab_0$-indiscernible sequence
$J:=\lseq{c}{j}{\om}$ and a formula $\psi(x,y)$ such that $c=c_0$,
$\models \psi(a,c_0)$ and $\{ \psi(x,s_i)\}_{i\in \omega}$ is
$k'$-inconsistent for some $k'\in \mathbb{N}$; by taking the
maximum, we may assume that $k=k'$.

By extension, there is some $J'\equiv_{Ac_0} J$ such that $I\ind_A
J'$. We may assume that $I$ is as long as we want, so by Fact
\ref{Erdos Rado} there is an \om-sequence $I' \equiv_{Ac_0} I$
which is indiscernible over $J'$ such that every $n$-type of $I'$
over $AJ'$ ``appears'' in $I$. In particular, it is still the case
that $I' \ind_A J'$. Moreover, since $I$ is indiscernible over
$Ac_0$, we have (denoting $I' = \lseq{b'}{i}{\om}$) $c_0b'_0
\equiv_A c_0b_0 = cb$, in particular, $\exists
x\ph(x,b'_0)\land\psi(x,c_0)$ is consistent with $\tp(a/A)$,
whereas the formulas $\ph(x,b'_0)$ and $\psi(x,c_0)$ divide over
$Ac_0$ and $Ab_0'$ respectively, exemplified by the sequences
$I'$, $J$.

Note that $I'$ is indiscernible over $AJ'$ and by Observation
\ref{indisc preserve} (since $I' \ind_A J'$), $J'$ is. Lemma
\ref{negation of dividing} implies that we obtain a randomness
pattern of depth 2 for $\tp(a/A)$ as required.
\end{proof}

We will finish this section by partial results which do not assume
anything on the types. The following result addresses the question
about the possible assumptions on indiscernible sequences (e.g.,
exemplifying dividing and pre-weight $k$; we concentrate on $k=2$)
are sufficient for achieving results such as constructing a
randomness pattern. They are not strict generalizations of Theorems
\ref{theorem:finite gen stable weight} or \ref{theorem:finite
generic weight} because we need to include requirements on the
sequences, and not just their first elements. However, it has the
advantage of removing the generic stability assumptions completely.

\begin{proposition}\label{prp:mut ind}
    Let $I = \lseq{a}{i}{\om}$, $J = \lseq{b}{j}{\om}$ be
    $A$-indiscernible sequences such that $I \ind_A b_0$ and $b_0
    \ind_A a_0$. Then there exist $I',J'$ mutually $A$-indiscernible
    with $I' \equiv_{Aa_0} I$, $J' \equiv_{Aa_0} J$.
\end{proposition}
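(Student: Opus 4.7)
The plan is to keep $I' := I$ and construct $J'$ as a suitably chosen non-forking conjugate of $J$ over $A$ that becomes mutually $A$-indiscernible with $I$. The main tools will be Observation \ref{indisc preserve} (propagation of indiscernibility under non-forking), extension of non-forking types, and a decomposition of each sequence into its first element and its tail.

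First I would carry out the preparation step. From $I \ind_A b_0$, using symmetry of non-forking in the dependent context (available over models or extension bases, which we will assume is the situation), we get $b_0 \ind_A I$. Since $I$ is $A$-indiscernible, Observation \ref{indisc preserve} yields that $I$ is indiscernible over $Ab_0$. Decomposing $I = \langle a_0\rangle \cdot I_{>0}$, it follows that $I_{>0}$ is indiscernible over $Aa_0b_0$. Next, by extension of non-forking one finds $J'$ realizing $\tp(J/Aa_0b_0)$ with $J' \ind_{Aa_0b_0} I$; this forces $J' \equiv_{Aa_0} J$ (so in particular the starting element is $b_0$), and $J'$ is $A$-indiscernible because $J' \equiv_A J$.

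The remaining content is to verify mutual $A$-indiscernibility of $I$ and $J'$. For $I$ indiscernible over $AJ'$: since $I_{>0}$ is $Aa_0b_0$-indiscernible and (using symmetry) $I_{>0} \ind_{Aa_0b_0} J'$ from the construction, Observation \ref{indisc preserve} shows $I_{>0}$ is indiscernible over $Aa_0b_0J' = AJ'$, and hence (attaching $a_0$ as the first element) $I$ itself is indiscernible over $AJ'$. For $J'$ indiscernible over $AI$: write $J' = \langle b_0 \rangle \cdot J'_{>0}$. Since $J'$ is $A$-indiscernible, $J'_{>0}$ is indiscernible over $Ab_0$; then one uses $J'_{>0} \ind_{Ab_0} a_0I_{>0}$ (obtained from $J' \ind_{Aa_0b_0}I$ together with the symmetry/transitivity arguments coming from $b_0 \ind_A a_0$ and $I \ind_A b_0$) and Observation \ref{indisc preserve} again to upgrade the indiscernibility of $J'_{>0}$ from $Ab_0$ to $Ab_0a_0I_{>0} = AI$. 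Setting $I' := I$ and $J' := J'$ now satisfies all requirements.

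The main obstacle is the delicate manipulation of non-forking in the dependent setting: symmetry and the kind of transitivity used above are not cleanly available in arbitrary dependent theories, since non-forking is only partially symmetric. In particular, the second half of the mutual indiscernibility verification (namely, upgrading the base of indiscernibility of $J'_{>0}$ from $Ab_0$ to $Ab_0a_0I_{>0}$) is the really subtle step, because the initial hypotheses on $J$ relate it directly only to $b_0$, not to $a_0$, and the passage through $a_0$ must be mediated by $b_0 \ind_A a_0$ and $I \ind_A b_0$ together. The cleanest formulation is likely to invoke strict non-forking (in the sense of Shelah's \cite{Sh783}) or to work explicitly over an extension base, which is consistent with the hypotheses of the surrounding section.
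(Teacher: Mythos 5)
Your argument leans on symmetry of non-forking at the points where the real work happens, and symmetry is not available in this setting: outside simple theories non-forking is genuinely asymmetric (this is essentially Kim's characterization of simplicity), and no form of it is established in this paper for dependent theories, even over models. This is precisely why the proposition assumes \emph{both} $I \ind_A b_0$ and $b_0 \ind_A a_0$ --- if symmetry held, the second hypothesis would follow from the first by monotonicity --- and why the surrounding discussion points to \emph{strict} non-forking as the notion that would repair such arguments. Concretely, two steps fail. First, your preparation step derives $b_0 \ind_A I$ from $I \ind_A b_0$ ``by symmetry''; the legitimate route is to use the hypothesis $b_0 \ind_A a_0$, extend $\tp(b_0/Aa_0)$ to a non-forking type over $Aa_0I$, and pull back by an automorphism over $Aa_0$ --- which replaces $I$ by an $Aa_0$-conjugate, and is exactly why the conclusion only promises $I' \equiv_{Aa_0} I$ rather than $I'=I$. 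Second, and more seriously, to conclude via Observation \ref{indisc preserve} that $J'$ is indiscernible over $AI$ you need $I \ind_A J'$ (the \emph{new parameters} must not fork over the base against the sequence), whereas your construction of $J'$ by extension only yields $J' \ind_{Aa_0b_0} I$, the opposite direction. You flag this yourself and defer it to strict non-forking or extension bases, but that deferral is the entire content of the step; as written the proof does not close. A further independent problem is the reattachment of $a_0$: knowing that the tail $I_{>0}$ is indiscernible over $Aa_0b_0J'$ says nothing about types of tuples containing $a_0$ versus tuples avoiding it, so it does not make the full sequence $I$ indiscernible over $AJ'$.

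The paper's proof is organized so that every application of non-forking goes in the direction one actually has. After arranging $b_0 \ind_A I$ as above (so that the \emph{whole} of $I$ is $Ab_0$-indiscernible by Observation \ref{indisc preserve}), the roles are reversed relative to your construction: one extends $\tp(I/Ab_0)$ --- non-forking over $A$ by hypothesis --- rather than the type of $J$, prolongs the resulting sequence, extracts via Fact \ref{Erdos Rado} a copy $I''$ that is indiscernible over $AJ$ with $I'' \equiv_{Ab_0} I$ and $I'' \ind_A J$, and then applies an automorphism over $Ab_0$ carrying $I''$ onto $I$ and $J$ to some $J'$. This simultaneously gives that $I$ is indiscernible over $AJ'$ (by the extraction, not by Observation \ref{indisc preserve}) and that $I \ind_A J'$, which is the correct direction for Observation \ref{indisc preserve} to yield that $J'$ is indiscernible over $AI$. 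The Erd\H{o}s--Rado extraction combined with the automorphism trick is the idea your proposal is missing; without it, fixing $I$ and moving only $J$ can never produce the relation $I \ind_A J'$ that the last step requires.
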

\begin{proof}
    Since $b_0 \ind_A a_0$, there is $b''_0 \ind_A I$ satisfying
    $b''_0 \equiv_{Aa_0} b_0$. Applying an automorphism over $Aa_0$
    taking $b''_0$ to $b_0$, we obtain a new sequence $I'$ with the
    same type over $Aa_0$ as $I$; so without loss of generality $I =
    I'$, and in addition to our assumptions we have $b_0 \ind_A I$,
    hence $I$ is $Ab_0$-indiscernible.

    Since $I\ind_Ab_0$, there is $I''\equiv_{Ab_0}I$ satisfying $I''
    \ind_A J$. Applying an automorphism taking the first element of
    $I''$ to $a_0$ (fixing $Ab_0$), we obtain new sequences $I',J'$
    starting with $a_0,b_0$ respectively, satisfying the same type
    over $A$ as $I,J$ respectively. So without loss of generality $I
    = I', J = J'$.

    Prolonging $I$ and applying Fact \ref{Erdos Rado}, we get $I''$
    indiscernible over $AJ$, ``similar'' to $I$ over $AJ$; in
    particular, $I''$ is $Ab_0$ - indiscernible (and has the same type
    over $Ab_0$ as $I$) and $I'' \ind_A J$. Applying an automorphism
    $\sigma$ over $Ab_0$ taking $I''$ onto (an initial segment of) $I$, we
    get, denoting $J' = \sigma(J)$:
    \begin{itemize}
    \item
        $J' \equiv_{Ab_0} J$
    \item
        $I$ is indiscernible over $AJ'$
    \item
        $I \ind_A J'$, hence $J'$ is indiscernible over $AI$
    \end{itemize}
    This finishes the proof.
\end{proof}

The following is easy now:

\begin{corollary}
    Let $p \in \tS(A)$, $I = \lseq{a}{i}{\om}$, $J = \lseq{b}{j}{\om}$ be
    $A$-indiscernible sequences such that $I \ind_A b_0$ (e.g. $I$ is a Morley sequence over $Ab_0$ based on $A$)
    and $J \ind_A a_0$ (or just $b_0
    \ind_A a_0$) such that
    \begin{itemize}
    \item
        $I,J$ exemplify that $\ph(x,a_0)$,
        $\ps(x,b_0)$ divide over $A$
    \item
        $\ph(x,a_0)\land\ps(x,b_0)$ is consistent with $p(x)$
    \end{itemize}
    Then there exists a dividing pattern and a randomness pattern in
    $p$. In particular, $p$ is not dp-minimal.
\end{corollary}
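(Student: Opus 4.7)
The plan is to apply Proposition \ref{prp:mut ind} directly and then invoke Remark \ref{rmk:pattern0} and Lemma \ref{negation of dividing} (ii) to turn the resulting configuration into a dividing pattern and then a randomness pattern of depth~$2$.

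First I would verify that the hypotheses of Proposition \ref{prp:mut ind} are met. We are given $I \ind_A b_0$ directly, and $b_0 \ind_A a_0$ follows either from the strong assumption $J \ind_A a_0$ by monotonicity, or is assumed outright. Applying the Proposition, we obtain mutually $A$-indiscernible sequences $I' = \lseq{a'}{i}{\om}$ and $J' = \lseq{b'}{j}{\om}$ with $I' \equiv_{Aa_0} I$ and $J' \equiv_{Ab_0} J$; in particular $a'_0 = a_0$, $b'_0 = b_0$, and $I'$, $J'$ still witness that $\ph(x,a_0)$ and $\ps(x,b_0)$ divide over $A$ (with some $k_\ph$- and $k_\ps$-inconsistency constants inherited from $I$ and $J$).

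Next, with $\ph_0 := \ph$, $\ph_1 := \ps$ and the array $\langle a'_i, b'_j : i,j < \om\rangle$, I would check that we have a dividing pattern of depth $2$ for $p$ in the sense of Definition \ref{dfn:strong}: clause (a) is mutual indiscernibility, just established; clause (b) on lengths is trivial; clause (d) is the $k_\al$-inconsistency of each row, given by the dividing assumption on $I$ and $J$; and clause (c) reduces, via Remark \ref{rmk:pattern0} and mutual indiscernibility, to the consistency of $\set{\ph(x,a'_0),\ps(x,b'_0)} = \set{\ph(x,a_0),\ps(x,b_0)}$ with $p$, which is exactly the second bullet of the hypothesis.

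Finally, Lemma \ref{negation of dividing}(ii) converts this dividing pattern of depth $2$ into a randomness pattern of depth $2$ for $p$, so in particular $\mathrm{dp\text{-}rk}(p) \ge 2$ and hence $p$ is not dp-minimal. There is no real obstacle here; Proposition \ref{prp:mut ind} does all the combinatorial work, and the only thing to be careful about is that the conclusions of that Proposition preserve the first elements $a_0, b_0$, so that the hypotheses on $\ph(x,a_0), \ps(x,b_0)$ (both the dividing and the joint consistency with $p$) transfer without change to the new sequences.
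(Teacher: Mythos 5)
Your proposal is correct and is exactly the argument the paper intends: the corollary is stated immediately after Proposition \ref{prp:mut ind} with the remark that it "is easy now," and the intended route is precisely to apply that Proposition to obtain mutually $A$-indiscernible sequences preserving $a_0,b_0$, observe that the dividing and joint-consistency hypotheses transfer, and then pass from the resulting depth-$2$ dividing pattern to a randomness pattern via Remark \ref{rmk:pattern0} and Lemma \ref{negation of dividing}(ii). Your write-up fills in the routine verifications correctly, so there is nothing further to add.
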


The results above seem to suggest that for discussion of weight in
dependent theories it is not enough to look just at the first
elements of the sequences of a dividing pattern (the usual notion
of forking weight). So let us conclude with the following notion
of \emph{splitting weight} which behaves quite nicely.

\begin{definition}\label{dfn:split_weight}
$\left.\right.$

\begin{itemize}

\item Let $p(x)$ be any type over some set $A$. We will say that
$a, \langle b^0_ib^1_i\rangle_{i=1}^n$ witnesses
$split-\pwt(p(x))\geq n$ (\emph{splitting pre-weight of $p$} is at
least $n$) if $a\models p(x)$, $\seq{b^0_ib^1_i}_{i=1}^n$ is
$A$-independent and $a\nind_A b^0_ib^1_i$ for all $i$ in a very
strong way, namely:
\begin{itemize}
    \item[$\diamondsuit$] There exists a formula $\ph_i(x,y)$ such that
    $$\models \ph_i(a,b^0_i)\land\neg\ph_i(a,b^1_i)$$.
\end{itemize}
 If $n$ is
maximal such that such a witness exists, we will say that $a,
\langle b_i\rangle_{i=1}^n$ witnesses $split-\pwt(p(x))=n$ and that
$p$ \emph{has splitting pre-weight $n$}.

\item Let $p(x)$ be any type over some set $A$. We will say that
$a,B, \langle b^0_ib^1_i\rangle_{i=1}^n$ witnesses
$split-\wt(p(x))\geq n$ (\emph{splitting weight of $p$} is at least
$n$) if $a\models p(x)$, $\seq{b^0_ib^1_i}_{i=1}^n$ is
$A$-independent and
%$a\nind_A
%b^0_ib^1_i$ for all $i$ in a very strong way, namely:
\begin{itemize}
    \item[$\diamondsuit$] There exists a formula $\ph_i(x,y)$ such that
    $$\models \ph_i(a,b^0_i)\land\neg\ph_i(a,b^1_i)$$.
\end{itemize} If $n$ is maximal such that such a
witness exists, we will say that $a,B, \langle b_i\rangle_{i=1}^n$
witnesses $split-\wt(p(x))=n$ and that $p$ has \emph{splitting
weight $n$}.
\end{itemize}
\end{definition}

\begin{observation}
    Let $p \in \tS(A)$. Then $p$ is strongly dependent if and only if its
    splitting pre-weight is rudimentary finite.
\end{observation}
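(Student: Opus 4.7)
The plan is to prove both directions of the equivalence separately, exploiting the close formal analogy between a splitting pre-weight witness and a randomness pattern of depth $\omega$.

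For the non-trivial direction, suppose the splitting pre-weight of $p$ is rudimentarily infinite: fix $a \models p$, an $A$-independent sequence of pairs $\langle b^0_i b^1_i \rangle_{i<\omega}$, and formulas $\varphi_i(x,y)$ with $\models \varphi_i(a, b^0_i) \land \neg\varphi_i(a, b^1_i)$. I would first apply Theorem \ref{mutually ind} (which uses that $T$ is dependent and that the pairs are $A$-independent) to obtain mutually $A$-indiscernible Morley sequences $J_i = \langle c^i_j : j<\omega\rangle$ with $c^i_0 = b^0_i b^1_i$ for each $i$. The plan is then to argue that $(J_i, \chi_i)_{i<\omega}$, with $\chi_i(x, y_0 y_1) := \varphi_i(x, y_0) \land \neg\varphi_i(x, y_1)$, forms a dividing pattern of depth $\omega$ for $p$: clause (c) of Definition \ref{dfn:strong}(i) is witnessed by $a$, while the $k_i$-inconsistency required in clause (d) is the main content. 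I would argue the latter by contradiction: a realization of $\bigwedge_j \chi_i(x, c^i_j)$ would, together with $A$-indiscernibility of $J_i$ and compactness, exhibit $\varphi_i$ taking prescribed alternating truth values along a derived $A$-indiscernible configuration of unbounded length, contradicting dependence via Fact \ref{fct:depend}. Once this dividing pattern is established, Lemma \ref{negation of dividing} yields a randomness pattern of depth $\omega$, contradicting strong dependence of $p$.

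For the converse direction, assume that $p$ is not strongly dependent, so there is a randomness pattern of depth $\omega$ given by mutually $A$-indiscernible sequences $I^\alpha = \langle b^\alpha_i : i<\omega\rangle$ and formulas $\varphi_\alpha$. Taking $\eta \equiv 0$ and a realizer $a \models p \cup \Gamma_\eta$ immediately yields $\models \varphi_\alpha(a, b^\alpha_0) \land \neg\varphi_\alpha(a, b^\alpha_1)$ for every $\alpha$, so the pairs $(b^\alpha_0, b^\alpha_1)$ with formulas $\varphi_\alpha$ and realizer $a$ satisfy the $\diamondsuit$ condition of Definition \ref{dfn:split_weight}. The remaining task is to secure $A$-independence of $\langle b^\alpha_0 b^\alpha_1 : \alpha<\omega\rangle$; my plan is to pass, via Lemma \ref{negation of dividing} and the equivalence of randomness patterns with dividing patterns in dependent theories, to an associated dividing pattern whose non-forking structure gives the required independence, and then extract the pairs by local character of forking.

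The main obstacle in both directions is reconciling two distinct independence notions: the $A$-independence (non-forking) of the pairs required by the splitting pre-weight definition, versus the mutual $A$-indiscernibility of the rows appearing in the randomness pattern. In dependent theories these are linked via Theorem \ref{mutually ind} (in the forward direction) and via the randomness--dividing pattern equivalence (in the converse). A secondary obstacle in the forward direction is verifying the $k_i$-inconsistency for the candidate dividing pattern, which I expect to reduce to a careful application of NIP on the pair sequences, blocking simultaneous splitting realizations on arbitrarily many of them.
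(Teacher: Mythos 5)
Your opening move in the forward direction coincides with the paper's: apply Theorem \ref{mutually ind} to the $A$-independent pairs to obtain mutually indiscernible (Morley) sequences $J_i=\langle d^i_je^i_j\colon j<\omega\rangle$ with $d^i_0e^i_0=b^0_ib^1_i$. The gap is in what you do next. You assert that $\{\varphi_i(x,d^i_j)\wedge\neg\varphi_i(x,e^i_j)\colon j<\omega\}$ is $k_i$-inconsistent, so that the array becomes a dividing pattern to which Lemma \ref{negation of dividing} applies. This is false in general, and your justification does not repair it: Fact \ref{fct:depend} controls the truth values of a formula only along an \emph{indiscernible} sequence, and the interleaved sequence $d^i_0,e^i_0,d^i_1,e^i_1,\dots$ is not indiscernible --- only the sequence of \emph{pairs} is, and no indiscernible reordering preserves the alternation. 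Concretely, in DLO take $\varphi(x,y)=x<y$ and an increasing nested family of intervals, $e_j<e_{j-1}<\dots<e_0<d_0<\dots<d_{j-1}<d_j$; then $\langle d_je_j\rangle_j$ is an indiscernible, even Morley, sequence of pairs over $\emptyset$, yet $\{\varphi(x,d_j)\wedge\neg\varphi(x,e_j)\}_j=\{e_j\le x<d_j\}_j$ has a common realization. So no dividing pattern arises this way and Lemma \ref{negation of dividing} has nothing to act on. The ``rest'' has to be a direct construction of a \emph{randomness} pattern: for each $i$ apply NIP to the indiscernible columns $\langle d^i_j\rangle_j$ and $\langle e^i_j\rangle_j$ to see that $\varphi_i(a,d^i_j)$ and $\varphi_i(a,e^i_j)$ are eventually constant, discard a finite middle segment while keeping $c^i_0$, and use mutual indiscernibility to verify consistency of every $\Gamma_\eta$; the case where the eventual truth values stay $(+,-)$ must still be handled (this is where the replacement formula $\varphi_i(x,y^0)\leftrightarrow\varphi_i(x,y^1)$ mentioned at the end of the section, and the independence of the pairs, enter). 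None of this is in your proposal.

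In the converse direction, realizing $p\cup\Gamma_\eta$ for $\eta\equiv 0$ does give the pairs $(b^\alpha_0,b^\alpha_1)$ and the $\diamondsuit$ condition, but your plan for their $A$-independence --- passing ``via Lemma \ref{negation of dividing} \dots to an associated dividing pattern whose non-forking structure gives the required independence'' --- is not an argument: Lemma \ref{negation of dividing} converts dividing patterns \emph{into} randomness patterns, not conversely, and a randomness pattern carries no non-forking information by itself (mutual indiscernibility of the rows does not imply that their initial segments form an $A$-independent family). The independence has to be built into the pattern (for instance by constructing the rows as a non-forking sequence as in Theorem \ref{mutually ind}, or extracted from the splitting itself via Fact \ref{fact:splitfork}), not recovered afterwards. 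As written, both halves of your proof break at exactly the steps that carry the content.
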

\begin{proof}
    The ``if'' direction is clear. For the ``only if'' direction, using Theorem
    \ref{mutually ind}, one can construct an array of mutually
    indiscernible (Morley) sequences starting with $b^0_ib^1_i$. The
    rest is easy.
\end{proof}

The next observation (whose proof is easy and similar to the
previous one) connects dependence in general to weight:

\begin{observation}
    A theory $T$ is dependent if and only if every type has a
    bounded splitting weight.
\end{observation}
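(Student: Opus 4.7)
Both directions are proved by contrapositive, echoing the pattern of the preceding observation. For the ``if'' direction, if some formula $\ph(x,y)$ has the independence property, take a Morley sequence $\langle b_i : i < \kappa \rangle$ over a model $M$ and an element $a$ with $\models \ph(a, b_{2i}) \land \neg\ph(a, b_{2i+1})$ for all $i$; the pairs $(b_{2i}, b_{2i+1})$ inherit $M$-non-forking-independence from the Morley sequence and directly witness splitting weight at least $\kappa/2$ for $\tp(a/M)$, so letting $\kappa$ vary shows $\tp(a/M)$ has unbounded splitting weight.

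For the ``only if'' direction, suppose some type $p$ has unbounded splitting weight. By compactness we obtain, for $\lambda = |T|^+$, an element $a \models p$, a base $A$, an $A$-independent sequence of pairs $\langle (b^0_\alpha, b^1_\alpha) : \alpha < \lambda \rangle$, and formulas $\ph_\alpha$ with $\models \ph_\alpha(a, b^0_\alpha) \land \neg\ph_\alpha(a, b^1_\alpha)$. A pigeonhole on formulas reduces us to a single $\ph$ on a sub-family of size $|T|^+$. Theorem \ref{mutually ind} applied to this non-forking-independent sequence of pairs yields mutually $A$-indiscernible sequences $I_\alpha = \langle c^j_\alpha = (b^{0,j}_\alpha, b^{1,j}_\alpha) : j < \omega \rangle$ each starting with the given pair. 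The aim is then to extract a randomness pattern of depth $|T|^+$ for the single formula $\theta(x; y, z) := \ph(x, y) \land \neg\ph(x, z)$, which by Observation \ref{obs:depstrong} will contradict dependence.

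For each $\alpha$, dependence applied row-by-row (Fact \ref{fct:depend}) forces the set $\{j : \models \theta(a, c^j_\alpha)\}$ to be finite or cofinite; a further pigeonhole, together with a Fact \ref{Erdos Rado} extension of each $I_\alpha$ to a bi-infinite mutually $A$-indiscernible sequence followed by sub-sequencing and shifting, lets us normalize so that $\theta(a, c^j_\alpha)$ holds at a single distinguished index per row. On such a bi-infinite mutually indiscernible array, row-independent shifts of each $I_\alpha$ by $\eta(\alpha)$ assemble (via a standard back-and-forth using the mutual indiscernibility) into an $A$-automorphism $\sigma_\eta$ for every $\eta : |T|^+ \to \omega$, and $a_\eta := \sigma_\eta(a)$ then satisfies $\theta(a_\eta, c^j_\alpha)$ precisely when $j = \eta(\alpha)$, witnessing consistency of $\Gamma_\eta$. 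The main obstacle is carrying out the normalization uniformly across all rows while preserving mutual indiscernibility and the original splitting witnesses; this is routine once sufficient pigeonholing and Erd\"os--Rado extensions are invoked, but requires careful bookkeeping.
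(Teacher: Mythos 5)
Your overall strategy for the main direction --- turn the independent set of pairs into a non-forking sequence, apply Theorem \ref{mutually ind} to get mutually indiscernible rows, and extract a randomness pattern of depth $|T|^+$ to contradict Observation \ref{obs:depstrong} --- is exactly what the paper intends. But the normalization step is a genuine gap. You work with the folded rows of pairs $c^j_\al=(b^{0,j}_\al,b^{1,j}_\al)$ and the formula $\theta(x;y,z)=\ph(x,y)\land\neg\ph(x,z)$, observe via Fact \ref{fct:depend} that $S_\al=\{j:\models\theta(a,c^j_\al)\}$ is finite or cofinite, and claim you can ``normalize so that $\theta(a,c^j_\al)$ holds at a single distinguished index per row.'' If $S_\al$ is all of $\om$, no shifting, sub-sequencing, or passage to $\neg\theta$ produces a distinguished index, and this case really occurs. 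Concretely: in the (stable) theory of an equivalence relation $E$ with two infinite classes, take $b^0_i$ in one class, $b^1_i$ in the other, all distinct, $a$ in the first class, and $\ph=E$. The pairs form a non-forking independent set over $\emptyset$ (the relevant formulas are over $\acl^{eq}(\emptyset)$), $\models E(a,b^0_i)\land\neg E(a,b^1_i)$ for every $i$, and along any indiscernible sequence of such pairs $\theta(a,-)$ holds at \emph{every} index. An argument using only the data you use would therefore ``prove'' that this stable theory has the independence property.

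What is missing is the ingredient that makes $\diamondsuit$ a genuinely strong form of $a\nind_A b^0_ib^1_i$: the pair $b^0_i,b^1_i$ must lie on a common infinite $A$-indiscernible sequence, so that $\diamondsuit$ is an instance of \emph{strong} splitting (this is what the name of the notion, the appeal to Fact \ref{fact:splitfork}, and the paper's closing remark about ``the first two elements of the sequence $I$'' are pointing at, even though the displayed Definition \ref{dfn:split_weight} omits it --- note that without it the example above refutes the Observation as literally stated). With that hypothesis one applies dependence to the \emph{unfolded} row of singletons with the formula $\ph_\al$ itself: the truth value of $\ph_\al(a,-)$ along that indiscernible sequence is eventually constant but not constant (true at $b^0_\al$, false at $b^1_\al$), so the minority value is attained on a finite \emph{nonempty} set of positions; blocking into finite tuples and shifting then feeds the randomness pattern exactly as you describe. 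Working with the folded pairs and $\theta=\ph\land\neg\ph$ destroys this dichotomy. Separately, your first direction asserts that the pairs $(b_{2i},b_{2i+1})$ ``inherit $M$-non-forking-independence from the Morley sequence''; a Morley sequence only gives $b_i\ind_M b_{<i}$, and in a theory with IP you cannot invoke symmetry to upgrade this to an independent \emph{set}, so that step also needs an actual argument.
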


We see that it is unnecessary to assume (as we did in Proposition
\ref{prp:mut ind}) that $I \ind_A b_0$, it is enough to look at the
first two elements of the sequence $I$; but this seems to be
important (in case $b_0$ is not generically stable). In a sense,
what we do is replacing in the dividing/randomness pattern the
formulas $\ph_i(x,y_i)$ with $\ph'_i(x,y^0_iy^1_i) = [\ph_i(x,y^0_i)
\equiv \ph_i(x,y^1_i)]$ and considering a dividing pattern (=
witness for high pre-weight) with respect to these new formulas.

\bigskip

We would like to finish by remarking that results in this section
pretend to be a first approach to characterize strong dependence
by a notion of finite weight. A complete result of this type would
be quite interesting and, we believe, very useful. However, it is
not clear that plain forking is the right notion for this. As we
mentioned before, it seems that strict non-forking is the right
way to go, and we refer the reader to \cite{Us2} and
\cite{KaUs} for more details.
%Ongoing research by Kaplan, Shelah and both authors and partial
%results by Kaplan and the second author seem to be going in the
%right direction.

\appendix
\section{cc-forking and cc-dividing}

The following theorem is proven in \cite{OnUs}. We include the proof
here for the sake of completeness.

\begin{theorem}\label{equivalent}
The following are equivalent for any $p\in S_1(A)$.
\begin{enumerate}
\item $p$ admits $n$-$cc$-\th-forking.

\item $p$ admits $n$-$cc$-\th-dividing.

\item There is an extension $p(x,B)$ of $p(x)$ such that $p(x,B)$
admits $n$-$cc$-strong dividing.

\end{enumerate}
\end{theorem}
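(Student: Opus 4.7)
The plan is to prove the cycle $(1) \Rightarrow (2) \Rightarrow (3) \Rightarrow (1)$. Two directions come almost for free from the chain ``strong dividing $\Rightarrow$ \th-dividing $\Rightarrow$ \th-forking'': $(2) \Rightarrow (1)$ since the same formulas and parameters work, and the core of $(3) \Rightarrow (2)$ is that strong dividing over $AB$ implies \th-dividing over $A$ (take $D = AB$ in the definition of \th-dividing). The subtlety is that the cc-\th-independence in (3) lives over $AB$ while in (1) and (2) it lives over $A$. To handle this I would use extension of \th-independence to replace the parameters $\a^i$ with $A$-independent copies realizing the same $A$-types, noting that whether $\ph(x, \a^i)$ \th-forks or \th-divides over $A$ depends only on $\tp(\a^i/A)$; consistency of the conjunction with $p$ is then preserved by choosing the replacement parameters via a coordinated extension starting from a common realization of the original witness.

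The main content is $(1) \Rightarrow (2)$, which passes from cc-\th-forking to cc-\th-dividing via Observation \ref{thorn dividing}(i). Given a cc-\th-forking witness $\langle \ph_i(x,\a^i)\rangle_{i<n}$ in $p$, I would fix $a \models p \cup \bigwedge_i \ph_i(x, \a^i)$. For each $i$, the type $\tp(a/A\a^i)$ contains $\ph_i(x, \a^i)$ and thus \th-forks over $A$, so the observation yields a non-\th-forking extension $\tp(a/A\a^i \d^i)$ in which $\tp(a/A\d^i)$ \th-divides over $A$; pick a formula $\psi_i(x, \d^i) \in \tp(a/A\d^i)$ witnessing this. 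To arrange the $\d^i$'s to be cc-\th-independent over $A$, I would perform the construction inductively: at stage $i$, choose $\d^i$ realizing the required type over $A\a^i$ but taken additionally as a non-\th-forking extension over $A\a^i$ to $A\a^i a \a^{\neq i} \d^{<i}$. An iterated transitivity argument (together with symmetry of \th-forking and the left-transitivity recorded in Fact \ref{left forking transitivity}) then yields $\d^i \thind_A \d^{\neq i}$, while consistency of $\bigwedge_i \psi_i(x, \d^i)$ with $p$ is inherited from the common realization $a$.

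For $(2) \Rightarrow (3)$, each $\ph_i(x, \a^i)$ strongly divides over some $D_i \supseteq A$. I would choose the $D_i$'s inductively so that $D_i \thind_{A\a^i} \a^{\neq i} D^{<i}$ (again by extension), and set $B = D_1 \cup \cdots \cup D_n$. These choices guarantee $\a^i \notin \acl(AB)$, so Remark \ref{strong dividing} upgrades the strong dividing of $\ph_i(x, \a^i)$ from $D_i$ to $AB$, and a transitivity argument analogous to the one in $(1) \Rightarrow (2)$ yields $\a^i \thind_{AB} \a^{\neq i}$. Taking any completion $p(x, B) \in \tS(AB)$ of $p$ containing $\bigwedge_i \ph_i(x, \a^i)$ (which is consistent since that conjunction is already consistent with $p$), we obtain the desired cc-strong-dividing witness for $p(x, B)$ over $AB$.

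The main obstacle throughout is the simultaneous maintenance of cc-\th-independence and consistency of the conjunction with $p$; in all three nontrivial directions this is handled by inductive constructions exploiting extension, symmetry, and left-transitivity of \th-independence, with the guiding principle being to choose each new parameter ``as \th-independent as possible'' over the appropriate base so that the desired multi-parameter independence condition can be assembled at the end.
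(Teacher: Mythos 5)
Your handling of the two substantive directions, $(1)\Rightarrow(2)$ and $(2)\Rightarrow(3)$, is essentially the paper's own argument: fix a realization $a$ of $p$ together with the original conjunction (this is what preserves consistency), use extension of \th-independence to choose the auxiliary parameters (the \th-dividing parameters in $(1)\Rightarrow(2)$, the strong-dividing bases $D_i$ in $(2)\Rightarrow(3)$) realizing the same type over $A\a^i$ while \th-independent from everything chosen so far, and then assemble the required independences by symmetry, transitivity and monotonicity, using Remark \ref{strong dividing} to upgrade strong dividing to the larger base. The paper carries this out explicitly for $n=2$ and defers the general case to ``a straightforward induction''; your inductive formulation is the same argument, and I checked that for $n=2$ your independence bookkeeping does assemble correctly. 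Routing $(1)\Rightarrow(2)$ through Observation \ref{thorn dividing}(i) instead of unfolding the definition of \th-forking is only a cosmetic difference, since that Observation is proved by exactly the unfolding that the paper performs inline. (A small imprecision in $(2)\Rightarrow(3)$: a completion over $AB$ cannot \emph{contain} $\bigwedge_i\ph_i(x,\a^i)$, whose parameters lie outside $AB$; you mean the restriction to $AB$ of a completion of $p\cup\{\bigwedge_i\ph_i(x,\a^i)\}$.)

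The genuine gap is in your $(3)\Rightarrow(2)$. You correctly isolate the issue --- the independence in $(3)$ lives over $AB$ while in $(2)$ it must live over $A$ --- but the proposed repair does not work. To keep the conjunction consistent with $p$ you anchor the replacement parameters to a common realization $a$, i.e.\ you need each new parameter to realize $\tp(\a^i/Aa)$. But then $\ph_i(x,\a^i)$ \th-forks over $A$ and belongs to $\tp(a/A\a^i)$, so $a\nthind_A\a^i$ is forced for every $i$. Extension of \th-independence over a base that contains $a$ can only deliver independence of the new parameters \emph{over} $Aa$, and there is no way to descend that base to $A$: transitivity would require $\a^i\thind_A a$, which is exactly what fails, and demanding $\a^i\thind_A a\a^{<i}$ outright is impossible. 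Producing formulas $\ph_i(x,\a^i)$ that \th-fork over $A$, with the $\a^i$ pairwise \th-independent \emph{over $A$} and the conjunction consistent with $p$, is precisely the assertion that $p$ has \th-pre-weight at least $n$ over $A$ (Remark \ref{ccweight}); this is the substance of the theorem, not something that extension alone provides. For comparison, the paper disposes of this direction by reading a cc-strong-dividing witness as a cc-\th-dividing (hence cc-\th-forking) witness \emph{over the same base} $AB$, so the base-descent you are attempting is not performed there; whichever way one reads clause $(3)$, your write-up needs a different mechanism at this point, and as written the step would fail.
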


\begin{proof}
Any witness for $n$-$cc$-strong dividing is a witness of
$n$-$cc$-\th-dividing and any witness for $n$-$cc$-\th-dividing is a
witness of $n$-$cc$-\th-forking. We will prove the other
implications for $n=2$. The general case will follow by a
straightforward induction on $n$ using the properties of \th-forking
in rosy theories.

(i) \then (ii). Let $$\{ \ph_i(x,\a), \psi(x,\b)\}, A$$ be a
$2-cc$-\th-forking witness for $p$. By definition there are finitely
many formulas $\ph_i(x,\a_i')$, $\psi_j(x,\b_j')$ and tuples
$\c',\bar d'$ such that $\ph(x,\a)\vdash \bigvee_{i=1}^{k_a}
\ph_i(x,\a_i')$, $\psi(x,\b)\vdash \bigvee_{j=1}^{k_b}
\psi_j(x,\b_j')$ and $\ph_i(x,\a_i')$ strongly divides over $A\c'$
and $\psi_j(x,\b_j')$ strongly divides over $A\bar d'$.

By hypothesis $\a \thind_A \b$ so by extension of \th-independence
we can find $\a''\models tp(\a/A\b)$ such that $\a''\ind_A
\seq{\b_j'}\bar d'\b$. Let $\seq{\a''_i}, \c''$ be images of
$\seq{\a'_i},\c'$ under an automorphism that fixes $A,\b$ and sends
$\a$ to $\a''$.

Using extension on the other side there are $\b'' \seq{\b''_j} \bar
d''\models tp(\b \seq{\b'_j}\bar d'/A\a'')$ such that
\[\seq{\a_i''}\bar c''\a''\thind_A \b'' \seq{\b_j''}\bar d''.\]

But $tp(\a''\b''/A)=tp(\a''\b/A)=tp(\a\b/A)$ so by applying an
automorphism over $A$, we can find \seq{\a_i}, $\c$, \seq{\b_j},
$\bar d$ such that $\seq{\a_i}\c\a \ind_A \seq{\b_j}\bar d\b$.

So in particular we have $\tp(\seq{\a_i}\c/A\a) =
\tp({\seq{\a'_i}}\c'/A\a)$ and $\tp(\seq{\b_i}\bar d/A\b) =
\tp({\seq{\b'_i}}\bar d'/A\b)$.

Therefore
\begin{equation}\label{equ:1}
  \ph(x,\a)\models \bigvee_{i=1}^{k_a} \ph_i(x,\a_i),\;\;
  \psi(x,\b)\models \bigvee_{i=1}^{k_a} \psi_i(x,\b_i),
\end{equation}
and
\begin{equation}\label{equ:2}
\begin{array}{l}
  \ph(x,\a_i) \; \mbox{strongly divides over} \; A\c \; \mbox{for all} \; i, \\
  \psi(x,\b_j) \; \mbox{strongly divides over} \; A\bar d\; \mbox{for all}\; j
\end{array}
\end{equation}

Since $\ph(x,\a)\wedge \psi(x,\b)$ is consistent with $p$, it is
clear from (\ref{equ:1}) that the conjunction $\ph_i(x,\a_i)\wedge
\psi_j(x,\b_j)$ is consistent with $p$ for some $i,j$. By
monotonicity of \th-forking independence we know that $\a_i\ind_A
\b_j$ so (\ref{equ:2}) implies that $(\ph(x,\a_i), \psi(x,\b_j)),A$
is a witness for $cc$-\th-dividing.

\bigskip

%To complete the proof of Theorem \ref{equivalent} we just need to
%show
(ii)$\Rightarrow$ (iii). Once again we will prove the case $n=2$.

Let $$\{ \ph_i(x,\a), \psi(x,\b)\}, A$$ be a $2-cc$-\th-dividing
witness for $p$. Let $D'$ and $E'$ be supersets of $A$ such that
$\ph(x,\a)$ strong divides over $D'$ and $\psi(x,\b)$ strong divides
over $E'$. Since $\a\thind_A \b$ we can, by extension (as in the
proof of (i) \then (ii)), find $D,E$ satisfy types $tp(D'/A\a)$ and
$tp(E'/A\b)$ respectively and such that $\a D\thind_A \b E$; so in
particular $\ph(x,\a)$ strong divides over $D$ and $\psi(x,\b)$
strong divides over $E$, $\a\thind_{D} E$, $\b\thind_E D$, and
$\a\thind_{D\cup E} \b$.

Since by definition $\a\not\in acl(D)$ and $\b\not\in \acl(E)$ we
get that $\a,\b\not\in \acl(ED)$: e.g., $\a\not\in\acl(D)$, but
$\a\thind_ED$, so $\a\not\in\acl(ED)$.

So
\begin{equation}\label{equ:3}
\begin{array}{l}
  \ph(x,\a) \; \mbox{strongly divides over} \; E\cup D, \\
  \psi(x,\b) \; \mbox{strongly divides over} \; E\cup D, \\
\a\thind_{D\cup E} \b.
\end{array}
\end{equation}

Let $B:=D\cup E$ and let $p(x,B,\a,\b)$ be a non-\th-forking
extension of $p(x)\cup \{\ph(x,\a)\cup \psi(x,\b)\}$ and let
$p(x,B)$ be the restriction of $p(x,B,\a,\b)$ to $B$. All the
conditions in the definition of $2-cc$-strong dividing are
satisfied which completes the proof of the theorem.
\end{proof}

\begin{bibdiv}
\begin{biblist}
\normalsize

\bib{Ad}{article}{
    AUTHOR = {Adler, Hans},
     TITLE = {Strong theories, burden and weight},
   JOURNAL = {In preparation},

}

\bib{HaOn}{article}{
    AUTHOR = {Hasson, Assaf},
    AUTHOR = {Onshuus, Alf}
     TITLE = {Stable types in rosy theories},
   JOURNAL = {Submitted, 2008},

}

\bib{HaOn2}{article}{
    AUTHOR = {Hasson, Assaf},
    AUTHOR = {Onshuus, Alf}
     TITLE = {Unstable structures definable in o-minimal theories},
   JOURNAL = {Submitted, 2009},

}

%\bib{Hr}{article}{
%    AUTHOR = {Hrushovski, Ehud},
%     TITLE = {Locally modular regular types},
% BOOKTITLE = {Classification theory (Chicago, IL, 1985)},
%    SERIES = {Lecture Notes in Math.},
%    VOLUME = {1292},
%     PAGES = {132--164},
% PUBLISHER = {Springer},
%   ADDRESS = {Berlin},
%      YEAR = {1987},
%   MRCLASS = {03C45 (20K99)},
%  MRNUMBER = {MR1033027 (90m:03064)},
%MRREVIEWER = {Anand Pillay}, }

%\bib{HHM}{article}{
%    AUTHOR = {Haskell, Deirdre},
%    AUTHOR = {Hrushovski, Ehud},
%    AUTHOR = {Macpherson, Dugald},
%     TITLE = {Stable domination and independence in algebraically closed valued fields},
%   JOURNAL = {submitted},
%%  FJOURNAL = {submitted},
%%    VOLUME = {53},
%%      YEAR = {2007},
%%    NUMBER = {1},
%%     PAGES = {1--21},
%%      ISSN = {0022-4812},
%%     CODEN = {JSYLA6},
%%   MRCLASS = {03C45 (03C64)},
%%  MRNUMBER = {MR2210053 (2007f:03047)},
%%MRREVIEWER = {Markus Junker},
%}

%\bib{HP}{article}{
%    AUTHOR = {Hrushovski, Ehud},
%    AUTHOR = {Pillay, Anand},
%     TITLE = {NIP and invariant measures},
%   JOURNAL = {preprint},
%%  FJOURNAL = {In preparation},
%%    VOLUME = {53},
%%      YEAR = {2007},
%%    NUMBER = {1},
%%     PAGES = {1--21},
%%      ISSN = {0022-4812},
%%     CODEN = {JSYLA6},
%%   MRCLASS = {03C45 (03C64)},
%%  MRNUMBER = {MR2210053 (2007f:03047)},
%%MRREVIEWER = {Markus Junker},
%}

\bib{Hy}{article}{
    AUTHOR = {Hyttinen, Tapani},
     TITLE = {Remarks on structure theorems for {$\omega_1$}-saturated
              models},
   JOURNAL = {Notre Dame J. Formal Logic},
  FJOURNAL = {Notre Dame Journal of Formal Logic},
    VOLUME = {36},
      YEAR = {1995},
    NUMBER = {2},
     PAGES = {269--278},
%      ISSN = {0029-4527},
%     CODEN = {NDJFAM},
%   MRCLASS = {03C50 (03C45 03C52)},
%  MRNUMBER = {MR1345748 (96g:03063)},
%MRREVIEWER = {G. Cherlin},
}

\bib{KaUs}{article}{
    AUTHOR = {Kaplan, Itay},
    AUTHOR = {Usvyatsov, Alexander}
     TITLE = {Strict independence in dependent theories},
   JOURNAL = {preprint},

}

\bib{LaPo}{article}{
    AUTHOR = {Lascar, Daniel},
    AUTHOR = {Poizat, Bruno},
    TITLE = {An introduction to forking},
    JOURNAL = {J. Symbolic Logic},
    FJOURNAL = {The Journal of Symbolic Logic},
    VOLUME = {44},
    YEAR = {1979},
    NUMBER = {3},
    PAGES = {330--350},
    ISSN = {0022-4812},
    CODEN = {JSYLA6},
    MRCLASS = {03C45},
    MRNUMBER = {MR540665 (80k:03030)},
    MRREVIEWER = {Guus Broesterhuizen}
}

\bib{onshuus}{article}{
    AUTHOR = {Onshuus, Alf},
     TITLE = {Properties and consequences of thorn-independence},
   JOURNAL = {J. Symbolic Logic},
  FJOURNAL = {The Journal of Symbolic Logic},
    VOLUME = {71},
      YEAR = {2006},
    NUMBER = {1},
     PAGES = {1--21},
      ISSN = {0022-4812},
     CODEN = {JSYLA6},
   MRCLASS = {03C64 (03C45)},
  MRNUMBER = {MR2210053},
}

\bib{OnUs}{article}{
    AUTHOR = {Onshuus, Alf}
    AUTHOR = {Usvyatsov, Alexander},
     TITLE = {On dp-minimality, strong stability and weight.},
   JOURNAL = {submitted.},
}

\bib{Sh783}{article}{
    AUTHOR = {Shelah, Saharon},
     TITLE = {Dependent first order theories, continued},
   JOURNAL = {To appear in Israel Journal of Mathematics, http://shelah.logic.at/files/783.ps},

}

\bib{shelah863}{article}{
    AUTHOR = {Shelah, Saharon},
     TITLE = {Strongly dependent theories},
   JOURNAL = {Submitted (Sh863)},
}

\bib{Us}{article}{
    AUTHOR = {Usvyatsov, Alexander},
     TITLE = {Generically stable types in dependent theories},
   JOURNAL = {Journal of Symbolic Logic},
    VOLUME = {74},
      YEAR = {2009},
    NUMBER = {1},
     PAGES = {216-250},
%      ISSN = {0022-4812},
%     CODEN = {JSYLA6},
%   MRCLASS = {03C64 (03C45)},
%  MRNUMBER = {MR2210053},
}

\bib{Us2}{article}{
    AUTHOR = {Usvyatsov, Alexander},
     TITLE = {Morley sequences in dependent theories},
   JOURNAL = {submitted.},
}

%\bib{pillaybook}{book}{
%    AUTHOR = {Pillay, Anand},
%     TITLE = {Geometric stability theory},
%    SERIES = {Oxford Logic Guides},
%    VOLUME = {32},
%      NOTE = {,
%              Oxford Science Publications},
% PUBLISHER = {The Clarendon Press Oxford University Press},
%   ADDRESS = {New York},
%      YEAR = {1996},
%     PAGES = {x+361},
%      ISBN = {0-19-853437-X},
%   MRCLASS = {03C45 (03-02 20A15)},
%  MRNUMBER = {MR1429864 (98a:03049)},
%MRREVIEWER = {Alexandre Ivanov}, }

\bib{shelahbook}{book}{,
    AUTHOR = {Shelah, S.},
     TITLE = {Classification theory and the number of nonisomorphic models},
    SERIES = {Studies in Logic and the Foundations of Mathematics},
    VOLUME = {92},
   EDITION = {Second},
 PUBLISHER = {North-Holland Publishing Co.},
   ADDRESS = {Amsterdam},
      YEAR = {1990},
     PAGES = {xxxiv+705},
      ISBN = {0-444-70260-1},
   MRCLASS = {03C45 (03-02)},
  MRNUMBER = {MR1083551 (91k:03085)},
MRREVIEWER = {Perry Smith}, }

\bib{wagnerbook}{book}{,
    AUTHOR = {Wagner, Frank O.},
     TITLE = {Simple theories},
    SERIES = {Mathematics and its Applications},
    VOLUME = {503},
 PUBLISHER = {Kluwer Academic Publishers},
   ADDRESS = {Dordrecht},
      YEAR = {2000},
     PAGES = {xii+260},
      ISBN = {0-7923-6221-7},
   MRCLASS = {03C45 (03-02 03C60 20A15)},
  MRNUMBER = {MR1747713 (2001b:03035)},
MRREVIEWER = {G. Cherlin}, }

\end{biblist}
\end{bibdiv}

\end{document}